\documentclass{amsart}


\usepackage[utf8]{inputenc}
\usepackage{amssymb}
\usepackage[leqno]{amsmath}
\usepackage{latexsym}
\usepackage{amsthm}
\usepackage{enumitem}
\usepackage{graphicx}
\usepackage{epstopdf}
\usepackage{bigints}
\usepackage[all,cmtip]{xy}
\usepackage{esint}
\usepackage{stmaryrd}
\setlist[itemize]{leftmargin=4ex}


\theoremstyle{plain}
\newtheorem{theorem}{Theorem}

\newtheorem{lemma}[theorem]{Lemma}
\newtheorem{proposition}[theorem]{Proposition}


\theoremstyle{remark}
\newtheorem{remark}[theorem]{Remark}


\numberwithin{equation}{section}
\numberwithin{theorem}{section}


\newcommand{\N}{\mathbb{N}}
\newcommand{\R}{\mathbb{R}}




\newcommand{\Cleq}{\ensuremath{\lesssim}}  
  
               
               
\DeclareMathOperator{\diam}{diam}                              
                              


\newcommand{\id}{\mathrm{Id}}                                  


\newcommand{\Mesh}{\mathcal{M}}
\newcommand{\Faces}[1]{\mathcal{F}_{#1}}

\newcommand{\FacesM}{\Faces{}}
\newcommand{\FacesMint}{\FacesM^i}
\newcommand{\FacesMbnd}{\FacesM^b}

\newcommand{\LagNod}[1]{\mathcal{L}_{#1}}
\newcommand{\LagNodM}[1]{\LagNod{#1}}
\newcommand{\LagNodMint}[1]{\LagNodM{#1}^i}

\newcommand{\Avg}[2]{\left \{\!\!\left \{ #1 \right \}  \!\!\right \}}
\newcommand{\Jump}[2]{\left \llbracket #1 \right \rrbracket}
\newcommand{\AvgOper}[1]{A_{#1}}

\newcommand{\Shape}{\gamma}

\newcommand{\nF}{n}


\newcommand{\degree}{p}
\newcommand{\Poly}[1]{\mathbb{P}_{#1}}                            
\newcommand{\Polyell}[1]{S_{#1}^0}
\newcommand{\Lagr}[1]{S_{#1}^1}              
\newcommand{\CRS}[1]{\mathit{CR}_{#1}}



\newcommand{\hct}{\mathit{HCT}}
\newcommand{\HCT}{\hct}


\DeclareMathOperator{\Div}{div}   
\DeclareMathOperator{\DivM}{\Div_{\Mesh}}
   

\newcommand{\Grad}{\nabla}
\DeclareMathOperator{\GradM}{\nabla_{\Mesh}}             
\DeclareMathOperator{\SymGrad}{\varepsilon}
\DeclareMathOperator{\SymGradM}{\SymGrad_{\Mesh}}

\DeclareMathOperator{\HessM}{D^2_{\Mesh}}                
\DeclareMathOperator{\Lapl}{\Delta}

\newcommand{\adj}[1]{{#1}^\star}

\newcommand{\BubbOper}[1][1]{B_{#1}}



\newcommand{\snorm}[1]{\left \lvert #1 \right \rvert}          
\newcommand{\norm}[1]{\| #1 \|}                     
\newcommand{\enorm}[1]{\snorm{#1}} 

\newcommand{\opnorm}[3]{\norm{#1}_{\ifx#2#3\mathcal{L}(#2)\else\mathcal{L}(#2,#3)\fi}}       
            


\newcommand{\Domain}{\Omega}
\newcommand{\Lebp}[2]{L^{#1}(#2)}
\newcommand{\Leb}[1]{\Lebp{2}{#1}}                                
                              
\newcommand{\SobH}[1]{H^1_0(#1)}
\newcommand{\SobHD}[1]{H^{-1}(#1)}
                             
\newcommand{\SobbH}[1]{H^2_0(#1)}


\newcommand{\App}{P}   
                    
\newcommand{\app}{M}										   
\newcommand{\smt}{E}
\newcommand{\Ritz}{\Pi}                                          
\newcommand{\Vext}{\widetilde{V}}                         


\newcommand{\aext}{\widetilde{a}}
\newcommand{\bext}{\widetilde{b}}
\newcommand{\Cqopt}{C_{\mathrm{qopt}}}
\newcommand{\Cstab}{C_{\mathrm{stab}}} 
\newcommand{\bCons}{d}
\newcommand{\CR}{\mathit{CR}}

\newcommand{\BS}[1]{#1_\mathrm{BS}}

\newcommand{\SIP}{\mathrm{sip}}
\newcommand{\NIP}{\mathrm{nip}}
\newcommand{\VAR}{\mathrm{var}}
\newcommand{\HL}{\mathrm{HL}}
\begin{document}


\title[Quasi-optimal nonconforming methods III]
{Quasi-optimal nonconforming methods
 for symmetric elliptic problems.
 III -- DG and other interior penalty methods}


\author[A.~Veeser]{Andreas Veeser}
\author[P.~Zanotti]{Pietro Zanotti}


\keywords{}


\begin{abstract}
We devise new variants of the following nonconforming finite element methods: DG methods of fixed arbitrary order for the Poisson problem, the Crouzeix-Raviart interior penalty method for linear elasticity, and the quadratic $C^0$ interior penalty method for the biharmonic problem. Each variant differs from the original method only in the discretization of the right-hand side. Before applying the load functional, a linear operator transforms nonconforming discrete test functions into conforming functions such that stability and consistency are improved. The new variants are thus quasi-optimal with respect to an extension of the energy norm. Furthermore, their quasi-optimality constants are uniformly bounded for shape regular meshes and tend to $1$ as the penalty parameter increases. 
\end{abstract}

\maketitle

%
%
\section{Introduction}
\label{S:introduction}
%
This article is the third in a series on quasi-optimal nonconforming methods for linear and symmetric elliptic problems. Here we apply the framework developed in the first part \cite{Veeser.Zanotti:17p1} to design and analyze quasi-optimal interior penalty methods.
Let us illustrate our construction and main results in the case of approximating the Poisson problem with discontinuous linear elements via the symmetric interior penalty (SIP) method, which was first studied by Wheeler \cite{Wheeler:78} and Arnold \cite{Arnold:82}.

Let $u \in \SobH{\Domain}$ be the weak solution of the Poisson problem
\begin{equation}
\label{intro-Poisson}
 -\Delta u = f \text{ in }\Omega,
\qquad
 u = 0 \text{ on }\partial\Omega
\end{equation}
and let $\Mesh$ be a simplicial, face-to-face mesh of the domain $\Omega\subseteq\R^d$, $d\in\N$. We write $\Sigma$ for its skeleton, $\Polyell{1}$ for the space of discontinuous $\Mesh$-piecewise affine functions and use standard notation for piecewise gradients, jumps, averages, local meshsizes etc.\ (cf.\ \S\ref{S:simplices-meshes} below). The SIP approximation $U \in \Polyell{1}$ solves the discrete problem
\begin{equation}
\label{intro-sip-problem}
\forall \sigma \in \Polyell{1}
\qquad
b(U, \sigma)
=
\int_\Domain f \sigma
\end{equation}
where $f \in \Leb{\Domain}$, the bilinear form $b:= b_1 + b_2$
is given by
\begin{align*}
%
	\nonumber
	& b_1(s, \sigma)
	:=
	\int_\Omega \GradM s \cdot \GradM \sigma 
	- 
	\int_\Sigma \Avg{\Grad s }{F}\cdot \nF \Jump{\sigma}{F},
\\
	%
	\nonumber
	& b_2(s, \sigma)
	:=
	\int_\Sigma \dfrac{\eta}{h}  \Jump{s}{F} \Jump{\sigma}{F}
	- \int_\Sigma 
	 \Jump{s}{F}\Avg{\Grad \sigma }{F}\cdot \nF,
\end{align*}
and the penalty parameter $\eta>0$ is so large that $b$ is coercive. Replacing $s$ by $u\in\SobH{\Domain}$, we see that
\begin{equation}
\label{intro-sip-consistency}
 u \in H^2(\Domain) \implies \forall \sigma \in \Polyell{1} \;\,
 b_1(u,\sigma) = \int_\Domain f \sigma,
\quad\text{while}\quad
 \forall \sigma \in \Polyell{1} \;\,
 b_2(u, \sigma) = 0.
\end{equation}
Hence, $b_2$ establishes symmetry and coercivity, without impairing the \textit{consistency} provided by $b_1$. For shape regular meshes, one therefore can derive the following abstract error bound; cf.\ 
Di Pietro and Ern \cite[Theorem 4.17]{DiPietro.Ern:12} and Gudi~\cite[\S3.2]{Gudi:10}:
\begin{equation}
\label{intro-sip-estimate}
 \enorm{u-U}_{1;\eta}
 \Cleq
 \inf \limits_{s \in \Polyell{1}} \Big(
  \enorm{u-s}_{1;\eta}^2 + \mathrm{AG}(u-s)^2
 \Big)^{\frac{1}{2}},
\end{equation}
where the norm
\begin{equation*}
 \enorm{v}_{1;\eta}^2
 :=
 \int_\Omega |\GradM v|^2
  +
 \int_\Sigma \frac{\eta}{h} |\Jump{v}{F}|^2,
\quad
 v \in \SobH{\Domain}+\Polyell{1},
\end{equation*}
extends the energy norm associated with \eqref{intro-Poisson} and is augmented with
\begin{equation*}
 \mathrm{AG}(v)^2
 := \quad
 \int_\Sigma \frac{h}{\eta}\snorm{\Avg{\Grad v}{F}}^2
 \quad\text{or}\quad
 \sum_{K\in\Mesh} h_K^2 \inf_{c\in\R} \norm{\Delta v - c}_{\Leb{K}}^2.
\end{equation*}
on the right-hand side.
While \eqref{intro-sip-estimate} implies convergence of optimal order, the augmentation is an important difference to C\'{e}a's lemma. Indeed, since it is not meaningful for a generic solution in $\SobH{\Domain}$, it cannot be bounded by the best error $\inf_{s \in \Polyell{1}} \norm{u-s}$ and, in addition, it restricts the applicability of \eqref{intro-sip-estimate}. Notice that also the extension of $b_1$ underlying \eqref{intro-sip-consistency} and the right-hand side in the discrete problem \eqref{intro-sip-problem} have similar issues with generic instances of \eqref{intro-Poisson}. These observations suggest that the \emph{stability} of the SIP method \eqref{intro-sip-problem} is impaired.  More precisely, if, e.g., the right-hand side cannot be boundedly extended to $H^{-1}(\Domain)=\SobH{\Domain}$, then $\enorm{U}_{1,;\eta}$, or the error $\enorm{u-U}_{1;\eta}$, cannot be bounded in terms of $\norm{f}_{\SobHD{\Domain}}$. Since this `full stability' is necessary for removing the augmentation $\mathrm{AG}$ from \eqref{intro-sip-estimate}, we thus expect that the SIP method \eqref{intro-sip-problem} is not $\enorm{\cdot}_{1;\eta}$-\emph{quasi-optimal} and so does not always fully exploit the approximation potential offered by its discrete space $\Polyell{1}$. This suspect is confirmed by Remark~4.9 in the first part \cite{Veeser.Zanotti:17p1} of this series.

In order to achieve quasi-optimality, we consider the following variant of the discrete problem \eqref{intro-sip-problem}:
find $U_\smt \in \Polyell{1}$ such that
\begin{equation}
\label{intro-sip-modproblem}
 \forall \sigma \in \Polyell{1}
\quad
 b(U_\smt, \sigma)
 =
 \langle f,  \smt \sigma \rangle,
\end{equation}
where the linear operator $\smt: \Polyell{1} \to \SobH{\Domain}$ to be specified enables $f \in H^{-1}(\Omega)$.  If we require that the means on internal faces are conserved as in Badia et al.\ \cite{Badia.Codina.Gudi.Guzman:14},
\begin{equation}
\label{intro-E-faces}
\forall \sigma \in \Polyell{1}, \, F \in \FacesMint
\quad
\int_F \smt \sigma 
=
\int_F \Avg{\sigma}{F},
\end{equation}
then piecewise integrating by parts twice shows
\begin{equation*}
 \forall s,\sigma \in \Polyell{1}
\quad
 b_1(s,\sigma)
 =
 \int_\Sigma  \Jump{\Grad s }{F}\cdot \nF \Avg{\sigma}{F}
 =
 \int_\Omega \nabla_\Mesh s \cdot \nabla (\smt\sigma).
\end{equation*}
Interestingly, the right-hand side provides a new extension $\bext_1$ of $b_1$ onto $\SobH{\Domain}$ which improves upon the first part of \eqref{intro-sip-consistency} in that
\begin{equation*}
 \forall u \in \SobH{\Domain}, \, \sigma \in \Polyell{1}
\quad
 \bext_1(u,\sigma) = \langle f, \smt\sigma \rangle.
\end{equation*}
In order to construct an `$\SobH{\Domain}$-smoothing operator' that satisfies \eqref{intro-E-faces} and is computionally feasible, we extend a similar operator devised in the second part \cite{Veeser.Zanotti:17p2} of this series, ensuring that its operator norm $\opnorm{\smt}{\Polyell{1}}{\SobH{\Domain}}$ is bounded in terms of the shape coefficient $\Shape_\Mesh$ of $\Mesh$.

Exploiting the improved stability and consistency properties of \eqref{intro-sip-modproblem}, the abstract theory of \cite{Veeser.Zanotti:17p1} then yields
\begin{equation*}
\label{intro-sip-modestimate}
 \enorm{u-U_\smt}_{1;\eta}
 \leq
 \left(
  1 + C\eta^{-1}
 \right)^{\frac{1}{2}}
 \inf \limits_{s \in \Polyell{1}} \enorm{u-s}_{1;\eta},
\end{equation*}
where $C$ depends on $d$ and $\Shape_\Mesh$ and $\eta$ is sufficiently large. Notably, as $\eta\to\infty$, the discontinuous space $\Polyell{1}$ is replaced by the space $\Lagr{1}$ of continuous piecewise affine functions and we end up exactly in C\'ea's Lemma for the conforming Galerkin method with $\Lagr{1}$.

It is worth comparing with the quasi-optimal Crouzeix-Raviart method for \eqref{intro-Poisson} of the second part \cite{Veeser.Zanotti:17p2} of this series. Thanks to the coupling between Crouzeix-Raviart elements, $b_1$ becomes symmetric and there is no need for $b_2$ and penalization. As a consequence, the ensuing quasi-optimality constant equals the operator norm  with respect to the piecewise energy norm of the smoothing operator $\smt$.

The rest of this article is organized as follows. Section \ref{S:theory} provides a brief summary of the abstract results in \cite{Veeser.Zanotti:17p1} to be used here.  In Section \ref{S:Applications}, we introduce new variants of various interior penalty methods and prove their quasi-optimality. Firstly, we design quasi-optimal DG methods of arbitrary fixed order for the Poisson problem, covering also the setting illustrated in this introduction. Secondly, we devise a quasi-optimal Crouzeix-Raviart interior penalty method for linear elasticity and establish a robust error bound for it in the nearly-incompressible regime. Lastly, we conclude with a quasi-optimal variant of the quadratic $C^0$-interior penalty method for the biharmonic problem.

In these examples, we consider polyhedral domains with Lipschitz boundaries and homogeneous essential boundary conditions. An application of the presented approach to more general domains and boundary conditions as well as numerical investigations will be presented elsewhere.

\section{Stability and consistency for quasi-optimality}
\label{S:theory}
%
%
We briefly summarize the characterization of quasi-optimality in \cite{Veeser.Zanotti:17p1}, adopting the approach to nonconforming consistency corresponding to the so-called
second Strang lemma.

A linear and symmetric elliptic problem can be written in the following abstract form: given $\ell \in V'$, find $u\in V$ such that
\begin{equation}
\label{ex-prob}
\forall v \in V
\quad
a(u, v) 
= 
\langle \ell, v \rangle,
\end{equation} 
where $V$ is an infinite-dimensional Hilbert space with scalar product $a(\cdot, \cdot)$, $V'$ is its (topological) dual space, and $\left\langle \cdot, \cdot\right\rangle$ stands for the dual pairing of $V$ and $V'$. We write
$\norm{\cdot} = \sqrt{a(\cdot,\cdot)}$ for the
\emph{energy norm}, which induces the \emph{dual energy norm}
$\norm{\ell}_{V'} := \sup_{v\in V, \norm{v} = 1} \langle\ell,v\rangle$ on $V'$. 
Problem \eqref{ex-prob} is well-posed in the sense of Hadamard and, introducing the Riesz isometry $A:V \to V'$, $v \mapsto a(v, \cdot)$, we have $u=A^{-1}\ell$ with $\norm{u} = \norm{\ell}_{V'}$.

We shall design quasi-optimal methods $\app: V' \to S$ with discrete problems of the following form: given $\ell \in V'$, find $\app\ell \in S$ such that
\begin{equation}
\label{disc-prob}
\forall \sigma \in S
\quad
b(\app \ell, \sigma)
=
\langle \ell, \smt \sigma \rangle,
\end{equation}
where $S$ is a finite-dimensional linear space, $b : S \times S \to \R$ a nondegenerate bilinear form, $\smt$ a linear operator from $S$ to $V$, and $\left\langle \cdot, \cdot\right\rangle$ stands also for the pairing of $S$ and $S'$. Although we do not require $S \subset V$, the operator $E$ ensures that the method $M$ is \emph{entire}, i.e.\ defined for all $\ell \in V'$. In light of \cite[Remark~2.4]{Veeser.Zanotti:17p1}, this is a necessary condition for the kind of quasi-optimality we are interest in. We refer to $\smt$ as a \textit{smoothing operator} or \textit{smoother}, because $S\not\subset V$ often arises for the lack of smoothness. Moreover, we identify the operator $\app$ with the triplet $(S,b,\smt)$, ignoring some slight ambiguity; cf.\  \cite[Remark~2.2]{Veeser.Zanotti:17p1}.

The relationship between continuous and discrete problem is illustrated by the commutative diagram in Figure~\ref{F:EntireNonconformingMethods}.
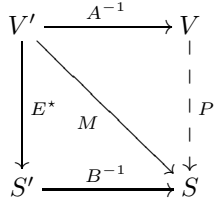
\begin{figure}	
	\[
	\xymatrixcolsep{4pc}		
	\xymatrixrowsep{4pc}		
	\xymatrix{
		V'   									
		\ar[d]^{\smt^\star}
		\ar[r]^{A^{-1}}       
		\ar[rd]_{\app} 					
		& V 									
		\ar@{-->}[d]^{\App} \\		
		S'										
		\ar[r]^{B^{-1}}				
		& S}               		
	\]
	\caption{\label{F:EntireNonconformingMethods}	Diagram with operators $A$, $B$, $\smt$, nonconforming method $\app = (S, b, \smt)$, and induced approximation operator $\App$.}
\end{figure}
This diagram introduces
\begin{itemize}
	\item the adjoint $\adj{\smt} :V' \to S'$ given by $\left\langle \adj{\smt} \ell, \sigma \right\rangle =  \left\langle \ell , \smt\sigma \right\rangle $ for $\ell \in V'$, $\sigma \in S$,
	\item the invertible map $B:S \to S'$, $s \mapsto b(s,\cdot)$,
	\item  the approximation operator $\App := \app A$
\end{itemize} 
and illustrates the representations
\begin{equation}
\label{M=}
\app = B^{-1} \adj{\smt}
\qquad \mathrm{and} \qquad
\App = B^{-1}\adj{\smt} A.
\end{equation}

A solution $u$ of \eqref{ex-prob} is thus approximated by $\app\ell$ with $\ell = Au$, that is, by $\App u$. To assess the quality of this approximation, we assume that $a$ can be extended to a scalar product $\aext$ on the sum $\Vext:=V+S$ and quantify the error with the \emph{extended energy norm}
\begin{equation*}
\norm{\cdot} := \sqrt{\aext(\cdot, \cdot)}
\quad\text{on }\Vext,
\end{equation*}
using the same notation as for the original one.  The best approximation error within $S$ to $u$ is then $\inf_{s\in S} \norm{u-s}$ and admitted by the $\aext$-orthogonal projection $\Ritz_S$ onto $S$.  We say that the method $\app$ is \emph{quasi-optimal} (for Problem \eqref{ex-prob} with respect to the extended energy norm) if there exists a constant $C \geq 1$ such that
\begin{equation}
\label{Cqopt-def}
\forall u \in V
\qquad
\norm{ u - \App u }
\leq
C \inf_{s\in S} \norm{u-s}.
\end{equation}	
The associated quasi-optimality constant $\Cqopt$ of $\app$ is then the smallest constant with this property. Notice that \eqref{Cqopt-def} involves all exact solutions of \eqref{ex-prob}, not only certain smooth ones.
\begin{theorem}[Stability, consistency, and quasi-optimality]
\label{T:abs-qopt}
Given a nonconforming method $\app = (S,b,\smt)$ for \eqref{ex-prob} and an extended scalar product $\aext$, introduce the bilinear form $\bCons: V \to \R$ by
\begin{equation*}
 \bCons(v,\sigma) := b(\Ritz_S v,\sigma) - a(v,\smt\sigma).
\end{equation*}
Then:
\begin{itemize}
\item[(i)] $\app$ is bounded, or fully stable, with 
\begin{equation*}
 \Cstab
 :=
 \opnorm{\app}{V'}{S}
 =
 \sup_{\sigma\in S} 
  \frac{\norm{\smt\sigma}}{\sup_{s \in S, \norm{s} = 1} b(s,\sigma)}.
\end{equation*}
\item[(ii)] $\app$ is quasi-optimal if and only if it is fully algebraically consistent in that
\begin{equation*}
 \forall u \in S\cap V, \sigma \in S
\quad
 0
 =
 \bCons(u, \sigma)
 =
 b(u,\sigma) - a(u,\smt\sigma).  
\end{equation*}
\item[(iii)] If $\app$ is quasi-optimal, then its quasi-optimality constant 
satisfies
\begin{equation*}
 \Cstab
 \leq
 \Cqopt
 =
 \sqrt{1+\delta^2},
\end{equation*}
where $\delta \in [0,\infty)$ is the consistency measure given by the smallest constant in
\begin{equation*}
 \forall v \in V, \sigma \in S
\quad
 \snorm{\bCons(v,\sigma)}  
 \leq
 \delta \sup_{\hat{s}\in S, \norm{\hat{s}}=1} b(\hat{s},\sigma) \;
	\inf \limits_{s \in S} \norm{v - s}.
\end{equation*}
\end{itemize} 
\end{theorem}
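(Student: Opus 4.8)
The plan is to derive all three assertions from the representation $\App = B^{-1}\adj{\smt}A$ together with properties of the $\aext$-orthogonal projection $\Ritz_S$. Throughout I write $\beta(\sigma) := \sup_{s\in S,\,\norm{s}=1} b(s,\sigma)$ for the quantity appearing in the denominators; since $b$ is nondegenerate and $S$ finite-dimensional, $\beta(\sigma)>0$ for $\sigma\neq0$, so $\beta$ is a norm on $S$ (after identifying $S$ with $S'$).

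\medskip
\noindent\textbf{Proof of (i).}
First I would compute $\opnorm{\App}{V'}{S}$ directly. For $\ell\in V'$, $\norm{\app\ell}_S = \norm{B^{-1}\adj{\smt}\ell}_S$; using $\norm{t}_S = \sup_{\sigma}\tfrac{b(t,\sigma)}{\beta(\sigma)}$ (valid since $\norm{\cdot}$ is the extended energy norm on $S\subseteq\Vext$ and $b$ is nondegenerate) and $b(B^{-1}\adj{\smt}\ell,\sigma) = \langle\adj{\smt}\ell,\sigma\rangle = \langle\ell,\smt\sigma\rangle$, I get $\norm{\app\ell}_S = \sup_{\sigma} \tfrac{\langle\ell,\smt\sigma\rangle}{\beta(\sigma)}$. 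Taking the supremum over $\norm{\ell}_{V'}=1$ and using $\sup_{\norm{\ell}_{V'}=1}\langle\ell,\smt\sigma\rangle = \norm{\smt\sigma}$ (by definition of the dual energy norm, since $\smt\sigma\in V$), and exchanging the two suprema, yields $\Cstab = \sup_{\sigma\in S} \tfrac{\norm{\smt\sigma}}{\beta(\sigma)}$, which is the claimed formula. Finiteness is immediate because $S$ is finite-dimensional and $\beta$ is a norm.

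\medskip
\noindent\textbf{Proof of (ii) and (iii).}
The key identity is the error decomposition: for $u\in V$, write $u - \App u = (u - \Ritz_S u) + (\Ritz_S u - \App u)$. The first summand is $\aext$-orthogonal to $S$, and the second lies in $S$, so by Pythagoras $\norm{u-\App u}^2 = \norm{u-\Ritz_S u}^2 + \norm{\Ritz_S u - \App u}^2$. Since $\norm{u-\Ritz_S u} = \inf_{s\in S}\norm{u-s}$, it remains to control the second term. Using the norm formula on $S$ and $b(\Ritz_S u - \App u,\sigma) = b(\Ritz_S u,\sigma) - \langle Au,\smt\sigma\rangle = b(\Ritz_S u,\sigma) - a(u,\smt\sigma) = \bCons(u,\sigma)$, I obtain
\begin{equation*}
 \norm{\Ritz_S u - \App u}
 =
 \sup_{\sigma\in S,\,\sigma\neq0} \frac{\snorm{\bCons(u,\sigma)}}{\beta(\sigma)}.
\end{equation*}
Now for (ii): if $\app$ is fully algebraically consistent, then for $u\in S\cap V$ we have $\bCons(u,\cdot)=0$, hence $\App u = \Ritz_S u = u$ on $S\cap V$; more importantly the consistency measure is finite — here is where I need to argue that $\bCons(v,\sigma)$ depends on $v$ only through $v - \Ritz_S v$ up to the $S\cap V$ ambiguity, or rather directly bound $\snorm{\bCons(v,\sigma)}$ by $\delta\,\beta(\sigma)\inf_{s}\norm{v-s}$ with finite $\delta$ using that $\bCons(s,\sigma)=0$ for all $s\in S\cap V$ and a finite-dimensional/closed-range argument; this gives $\norm{\Ritz_S u - \App u}\le\delta\inf_s\norm{u-s}$ and, combined with Pythagoras, $\norm{u-\App u}\le\sqrt{1+\delta^2}\,\inf_s\norm{u-s}$, so $\Cqopt\le\sqrt{1+\delta^2}<\infty$. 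Conversely, if some algebraic inconsistency exists, pick $u\in S\cap V$ and $\sigma$ with $\bCons(u,\sigma)\neq0$; then $\inf_s\norm{u-s}=0$ but $\App u\neq\Ritz_S u=u$, violating \eqref{Cqopt-def} for any $C$, so $\app$ is not quasi-optimal. For (iii): the inequality $\Cstab\le\Cqopt$ follows by testing quasi-optimality against $u=A^{-1}\ell$ ranging over $V$ and comparing with the stability bound (or noting $\Cqopt\ge1$ and that full stability is implied by quasi-optimality with the stated constant); the equality $\Cqopt=\sqrt{1+\delta^2}$ follows from the Pythagoras identity above once one checks both that $\sqrt{1+\delta^2}$ is an upper bound (done) and that it is attained in the limit by choosing $u$ realizing the sup in the definition of $\delta$ while keeping $u-\Ritz_S u$ aligned appropriately — a small optimization over a two-dimensional configuration.

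\medskip
\noindent\textbf{Main obstacle.}
The routine parts are the duality manipulations and the Pythagoras split. The genuinely delicate point is the \emph{finiteness} of $\delta$ under full algebraic consistency (the forward direction of (ii)) and the \emph{sharpness} $\Cqopt=\sqrt{1+\delta^2}$ rather than merely $\le$. For finiteness one must use that full algebraic consistency makes $\bCons(\cdot,\sigma)$ vanish on $S\cap V$, so it factors through $V/(S\cap V)$, on which $v\mapsto\inf_{s\in S}\norm{v-s}$ is a norm equivalent to the quotient norm; since for fixed $\sigma$ the map $v\mapsto\bCons(v,\sigma)/\beta(\sigma)$ is a bounded linear functional vanishing on $S\cap V$, one gets a bound with constant depending on $\sigma$, and then a compactness argument on the unit sphere of $(S,\beta)$ (finite-dimensional) produces a uniform $\delta$. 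For sharpness, one exhibits, for any $\varepsilon>0$, a pair $(u,\sigma)$ nearly attaining $\delta$ in the consistency inequality and simultaneously nearly attaining equality in Pythagoras — this requires that the "normal" component $u-\Ritz_S u$ and the "tangential" error $\Ritz_S u-\App u$ can be chosen of comparable, freely scalable sizes, which is possible precisely because $u$ ranges over the infinite-dimensional $V$ while $\bCons$ only sees the quotient. I would carry out this last verification carefully, as it is the crux of the "$=$" in (iii); everything else is bookkeeping with the commutative diagram.
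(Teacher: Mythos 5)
Your overall strategy---the dual characterization $\norm{t}=\sup_{\sigma}b(t,\sigma)/\beta(\sigma)$ of the $S$-norm, the Pythagoras split $\norm{u-\App u}^2 = \norm{u-\Ritz_S u}^2+\norm{\Ritz_S u - \App u}^2$, and the identity $b(\Ritz_S u - \App u,\sigma)=\bCons(u,\sigma)$---is the right one; the paper itself only cites its companion for the proof, but this is the natural route and almost certainly the one taken there. Part (i) is correct as you wrote it. However, you misjudge where the difficulty lies in (iii). The equality $\Cqopt=\sqrt{1+\delta^2}$ is not a ``small optimization over a two-dimensional configuration'' and needs no approximate-attainment argument: with $N(u):=\inf_{s\in S}\norm{u-s}$ and $T(u):=\norm{\Ritz_S u-\App u}=\sup_\sigma\snorm{\bCons(u,\sigma)}/\beta(\sigma)$, Pythagoras says $\norm{u-\App u}^2=N(u)^2+T(u)^2$, so the family of inequalities $\norm{u-\App u}\le C\,N(u)$ over $u\in V$ is \emph{equivalent} to the family $T(u)\le\sqrt{C^2-1}\,N(u)$; since $\Cqopt$ and $\delta$ are precisely the smallest constants in these two families, $\Cqopt=\sqrt{1+\delta^2}$ is immediate.

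The genuine gap is in the finiteness of $\delta$ under full algebraic consistency, the forward direction of (ii). You assert that $v\mapsto\inf_{s\in S}\norm{v-s}$ is equivalent on $V/(S\cap V)$ to the quotient norm $v\mapsto\inf_{w\in S\cap V}\norm{v-w}$, but this is exactly the nontrivial step, not something to be taken for granted. One direction, $\inf_{s\in S}\norm{v-s}\le\inf_{w\in S\cap V}\norm{v-w}$, is trivial. For the reverse, note that $\Ritz_{S\cap V}v=\Ritz_{S\cap V}\Ritz_S v$ because $v-\Ritz_S v\perp S\supset S\cap V$, hence $\inf_{w\in S\cap V}\norm{v-w}\le N(v)+\dist(\Ritz_S v,S\cap V)$; next $\dist(\Ritz_S v,V)\le\norm{\Ritz_S v-v}=N(v)$; and finally the two seminorms $\dist(\cdot,S\cap V)$ and $\dist(\cdot,V)$ on the \emph{finite-dimensional} space $S$ share the kernel $S\cap V$ and are therefore equivalent, giving $\dist(\Ritz_S v,S\cap V)\le C_0 N(v)$ and the desired bound with constant $1+C_0$. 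Without this, your ``compactness on the unit sphere of $(S,\beta)$'' controls only the $\sigma$-dependence, not the $v$-dependence, and the argument does not close.

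Finally, $\Cstab\le\Cqopt$ is only gestured at, and the routes you sketch (``$\Cqopt\ge1$'' or ``implied by quasi-optimality with the stated constant'') do not produce it. It does follow from the machinery you already have, and \emph{this} is where a one-variable optimization belongs: $\norm{\App u}\le\norm{\Ritz_S u}+T(u)\le\sqrt{\norm{u}^2-N(u)^2}+\delta\,N(u)$, and maximizing the right-hand side over $N(u)\in[0,\norm{u}]$ yields $\norm{\App u}\le\sqrt{1+\delta^2}\,\norm{u}$, i.e.\ $\Cstab\le\sqrt{1+\delta^2}=\Cqopt$.
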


\begin{proof}
Item (i) follows from \cite[Theorem~4.7]{Veeser.Zanotti:17p1}, while (ii) is a consequence of \cite[Theorem~3.2]{Veeser.Zanotti:17p1} and (i). Finally, the first part of \cite[Theorem~4.19]{Veeser.Zanotti:17p1} implies (iii).
\end{proof}

Some comments on Theorem \ref{T:abs-qopt} and consequences of its proof are in order. In (i) and (ii), the adverb `fully' stands for the fact that all (and not only certain) smooth instances of Problem \eqref{ex-prob} are involved.
The built-in \emph{full stability} of the considered methods is a necessary condition for quasi-optimality. It has to be established by applying a smoothing operator $\smt$ before evaluating the load functional.  We refer to the constant $\Cstab$ as the stability constant of $\app$.

Notice that \emph{full algebraic consistency} does not actually depend on the extension $\aext$ of the scalar product $a$. In particular, it can be rephrased in the following manner: whenever an exact solution happens to be discrete, it has to be also the discrete solution.
Natural candidates for full algebraic consistency are \emph{nonforming Galerkin methods} satisfying
\begin{equation}
\label{NonConformingGalerkinMethod}
b_{|S_C \times S_C} = a_{|S_C \times S_C}
\quad\text{and}\quad
E_{|S_C} = \id_{S_C},
\end{equation}
where $S_C := S \cap V$ is the conforming subspace of $S$. Notice that this generalization of conforming Galerkin methods does not determine $b$ and $E$ if $S$ is truly nonconforming. Furthermore, it may be weaker than full algebraic consistency, involving also nonconforming discrete test functions.

While full algebraic consistency involves only the conforming part $S_C$ of the discrete space, the constant $\delta$ captures consistency properties of $\app$ for nonconforming directions in $S \setminus V$. We call $\app$ \emph{(algebraically) overconsistent} whenever $\bCons(\cdot,\cdot)$ vanishes, that is whenever the discrete bilinear form $b$ is $\aext(\cdot,\smt\cdot)$. In this case, $\Cstab=\Cqopt$; see \cite[Theorem~2.5]{Veeser.Zanotti:17p2}. The following simple consequence of the inf-sup theory implies that this appealing property requires a certain interplay of $S$, $V$, and $\aext$; cf.\ also \cite[Lemma~2.4]{Veeser.Zanotti:17p2}, which strengthens the statement to a characterization.
\begin{lemma}[Obstruction for nondegenerate $\aext(\cdot,\smt\cdot)$]
\label{L:a(.,E.)}
Let $S$, $V$, and $\aext$ be given as in Theorem \ref{T:abs-qopt} and assume that the intersection of $S$ and the $\aext$-orthogonal complement of $V$ in $V+S$ is nontrivial. Then, for any smoother $\smt:S\to V$, the bilinear form $\aext(\cdot,\smt\cdot)$ is degenerate.
\end{lemma}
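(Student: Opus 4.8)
The plan is to show that the assumed nontrivial element produces a nonzero vector in the right kernel of $\aext(\cdot,\smt\cdot)$, regardless of the choice of $\smt$. Let me set up notation: write $W := V+S = \Vext$ and let $P_V$ denote the $\aext$-orthogonal projection of $W$ onto $V$, so that $V^{\perp_{\aext}} = \ker P_V$ is the $\aext$-orthogonal complement of $V$ in $W$. By hypothesis, the subspace $Z := S \cap V^{\perp_{\aext}}$ is nontrivial, so we may pick $\sigma_0 \in Z$ with $\sigma_0 \neq 0$. The key observation is that for \emph{every} $s \in S$ and every smoother $\smt : S \to V$, we have $\smt s \in V$, hence $\aext(\smt s, \sigma_0) = 0$ because $\sigma_0 \in V^{\perp_{\aext}}$. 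By symmetry of $\aext$, this reads $\aext(\sigma_0, \smt s) = 0$ for all $s \in S$, which is exactly the statement that $\sigma_0$ lies in the right kernel of the bilinear form $\aext(\cdot,\smt\cdot)$ on $S \times S$.

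First I would verify that $\sigma_0 \neq 0$ genuinely certifies degeneracy in the sense intended by Theorem~\ref{T:abs-qopt}: a bilinear form on a finite-dimensional space is nondegenerate precisely when it has trivial left and right kernels, so exhibiting one nonzero right-kernel element suffices. Then the computation above — three short lines invoking only that $\smt$ maps into $V$, that $\sigma_0 \perp_{\aext} V$, and that $\aext$ is symmetric — closes the argument. No regularity or boundedness of $\smt$ is used, which matches the phrasing ``for any smoother $\smt:S\to V$''.

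The only point that requires a little care, rather than being an outright obstacle, is the logical structure of the quantifiers: the claim is that \emph{no} smoother can make $\aext(\cdot,\smt\cdot)$ nondegenerate, and the proof must therefore fix an arbitrary $\smt$ at the outset and keep the witness $\sigma_0$ independent of $\smt$. Since $\sigma_0$ is chosen purely from the hypothesis on $S$, $V$, and $\aext$ — before $\smt$ enters the picture — this independence is automatic. I would also remark, in passing, that this is a one-directional statement: it gives a sufficient condition for degeneracy, and the reference \cite[Lemma~2.4]{Veeser.Zanotti:17p2} is cited as upgrading it to a characterization, so there is no need here to prove a converse. In short, I expect the proof to be essentially immediate once the orthogonality hypothesis is translated into the right-kernel language; the ``hard part'' is really just bookkeeping about what degeneracy means and keeping the $\smt$-quantifier in the right place.
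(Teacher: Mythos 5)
Your argument is correct and is the natural one-line proof that the paper's framing as ``a simple consequence of the inf-sup theory'' suggests: the hypothesis hands you a nonzero $\sigma_0 \in S$ that is $\aext$-orthogonal to all of $V \supseteq \smt(S)$, and this single element certifies degeneracy of $\aext(\cdot,\smt\cdot)$ for every choice of $\smt$. One small labeling slip: having shown $\aext(\sigma_0, \smt s)=0$ for all $s\in S$, you have placed $\sigma_0$ in the \emph{left} kernel of the form $(s_1,s_2)\mapsto \aext(s_1,\smt s_2)$, not the right kernel — note the form itself is not symmetric since $\smt$ acts only on the second slot. This is harmless, since on the finite-dimensional space $S$ the left and right kernels have the same dimension and either being nontrivial establishes degeneracy, as you yourself point out; but the sentence identifying $\sigma_0$ as a right-kernel element should be corrected.
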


For a further discussion of the aforementioned notions, their role and properties, we refer to \cite{Veeser.Zanotti:17p1} and \cite[\S2.2]{Veeser.Zanotti:17p2}. Here we continue by underlining that Theorem~\ref{T:abs-qopt} was formulated  with the following viewpoint: for quasi-optimality, the discrete bilinear form $b$ has to be a perturbation of $\aext(\cdot,\smt\cdot)$, which is fully algebraically consistent and affects the quasi-optimality via $\delta$. This viewpoint will be our guiding principle for constructing quasi-optimal interior penalty methods. It is therefore of interest to bound $\Cstab$ and $\delta$, connecting it to a well-known and important, but not yet mentioned constant.
\begin{remark}[Stability, consistency and inf-sup constants]
\label{R:stab-cons-infsup}
Let $\app = (S, b, \smt)$ be a nonconforming method. As $S$ is finite-dimensional, the nondegeneracy of $b$ entails that the inf-sup constant is positive:
\begin{equation*}
\label{b-infsup1}
 \alpha
 :=
 \inf_{\sigma \in S, \norm{\sigma}=1}
  \sup_{s \in S, \norm{s}=1} 
	b(s, \sigma)
 >0.
\end{equation*}
Then the definitions of $\Cstab$ and $\delta$ readily yield
\begin{equation}
\label{Cstab-and-||E||,delta-and-||bC||}
 \Cstab
 \leq
 \frac{\;\opnorm{\smt}{S}{V}}{\alpha}
\quad\text{and}\quad
 \delta 
 \leq 
 \frac{\gamma}{\alpha}
\end{equation} 
where $\gamma \geq 0$ verifies $\snorm{\bCons(s,\sigma)} \leq \gamma \inf_{s \in S}\norm{v-s} \norm{\sigma}$ for all $v \in V$ and $\sigma\in S$.  Hence, up to the inverse of the inf-sup constant $\alpha$, the constants $\Cstab$ and $\delta$ depend,  respectively, only on the smoothing operator $\smt$ and the bilinear form $\bCons$. It is worth noting that these bounds may be pessimistic; see \cite[Remark~2.7]{Veeser.Zanotti:17p2}.
\end{remark}

\section{Applications to interior penalty methods}
\label{S:Applications}
The goal of this section is to devise interior penalty methods that are based upon nonconforming finite elements and are quasi-optimal. In view of Theorem~\ref{T:abs-qopt} and Lemma~\ref{L:a(.,E.)}, we may achieve this by the following steps: given a continuous problem \eqref{ex-prob} and a nonconforming finite element space $S$,
\begin{itemize}
\item extend the scalar product $a$ to the sum $V+S$,
\item find a computationally feasible smoothing operator $\smt:S\to V$, possibly with $\smt_{|V \cap S} = \id_{V \cap S}$,
\item if necessary, use the bilinear form $\bCons$ to arrange that $b=\aext(\cdot,\smt\cdot)+\bCons$ is nondegenerate and has other optional properties like symmetry.
\end{itemize}
Denoting by $\varphi_1, \dots, \varphi_n$ the nodal basis of $S$, we consider a smoothing operator $\smt$ to be \emph{computationally feasible} if each $\smt\varphi_i$ is in some conforming finite element space and the number of elements in its support is bounded independently of $n$.

We shall carry out the aforementioned steps for three different settings, involving vector and fourth order problems as well as various couplings between elements (completely discontinuous, Crouzeix-Raviart, continuous). In each case the nondegeneracy of $b$ will be obtained by means of interior penalties.

\subsection{Simplicial meshes and (broken) function spaces}
\label{S:simplices-meshes}
We indicate Lebesgue and Sobolev spaces as usual, see, e.g., \cite{Brenner.Scott:08}, and adopt the following notations, mainly taken from \cite{Veeser.Zanotti:17p2}. 

Given $n \in \{0, \dots, d\}$, an $n$-\emph{simplex} $C \subseteq \R^d$ is the convex hull of $n+1$ points $z_1, \dots, z_{n+1} \in \R^d$ spanning an $n$-dimensional affine space. The uniquely determined points $z_1, \dots, z_{n+1}$ are the vertices of $C$ and form the set $\LagNod{1}(C)$. If $n\geq1$, we let $\Faces{C}$ denote the $(n-1)$-dimensional faces of $C$, which are the $(n-1)$-simplices arising by picking $n$ distinct vertices from $\LagNod{1}(C)$.  Given a vertex $z \in \LagNod{1}(C)$, its barycentric coordinate $\lambda_z^C$ is the unique first order polynomial on $C$ such that $\lambda_z^C(y) = \delta_{zy}$ for all $y \in \LagNod{1}(C)$. Then  $0\leq\lambda_z^C\leq 1$ and $\sum_{z\in\LagNod{1}(C)} \lambda_z^C =1 $ in $C$ and, if $\alpha = (\alpha_z)_{z\in\LagNod{1}(C)} \in \N_0^{n+1}$ is multi-index,
\begin{equation}
\label{intregation-bary-coord}
 \int_C \prod_{z\in\LagNod{1}(C)} (\lambda_z^C)^{\alpha_z} = \frac{n!\alpha!}{(n+|\alpha|)!} \snorm{C},
\end{equation}
where $\snorm{C}$ stands also for the $n$-dimensional Hausdorff measure in $\R^d$. We write $h_C := \diam(C)$ for the diameter of $C$, $\rho_C$ for the diameter of its largest inscribed $n$-dimensional ball, and $\Shape_C$ for its shape coefficient $\Shape_C :=  h_C / \rho_C$.

Let $\Mesh$ be a simplicial, face-to-face \emph{mesh} of some open, bounded, connected and polyhedral set $\Omega\subset\R^d$ with Lipschitz boundary $\partial\Omega$.  More precisely, $\Mesh$ is a finite collection of $d$-simplices in $\R^d$ such that
$\overline{\Domain} = \bigcup_{K \in \Mesh} K$ and the intersection of two arbitrary elements $K_1, K_2 \in \Mesh$ is either empty or an $n$-simplex with $n\in\{0 \dots, d\}$ and $\LagNod{1}(K_1 \cap K_2) = \LagNod{1}(K_1)\cap \LagNod{1}(K_2)$. We let $\FacesM :=  \bigcup_{K \in \Mesh} \Faces{K}$ denote
the $(d-1)$-dimensional faces of $\Mesh$ and distinguish between boundary faces
$\FacesMbnd := \{ F \in \FacesM \mid F \subseteq \partial\Omega \}$ and interior faces $\FacesMint := \FacesM \setminus \FacesMbnd$.  Moreover, let $\Sigma := \cup_{F\in\FacesM} F$ be the skeleton of $\Mesh$ and, fixing a unit normal $n_F$ for each interior face $F\in\FacesMint$, extend the outer normal $n$ of $\partial\Omega$ to $\Sigma$ by $n_{|F} = n_F$ for $F\in\FacesMint$. The ambiguity of $n_F$ is insignificant to our discussion. The meshsize $h$ on $\Sigma$ is given by $h_{|F} = h_F$ for all $F\in\FacesM$ and the shape coefficient of $\Mesh$ is 
\begin{equation*}
\label{shape-Mesh}
\Shape_\Mesh:=
\max \limits_{K \in \Mesh} \Shape_K.
\end{equation*}

For $k\in\N$, the broken Sobolev space of order $k$ is 
\begin{equation*}
 H^k(\Mesh)
 :=
 \{ v\in\Leb{\Domain} \mid \forall K \in \Mesh \; v_{|K} \in H^k(K) \}.
\end{equation*}
If $v\in H^k(\Mesh)$, we use the subscript $\Mesh$ to indicate the piecewise variant of a differential operator. For instance, $\GradM v$ is given by $(\GradM v)_{|K} := \nabla(v_{|K})$ for all $K \in \Mesh$. Jumps and averages are defined as follows. Given an interior face $F \in \FacesMint$, let $K_1, K_2 \in \Mesh$ be the two elements such that $F=K_1 \cap K_2$ and the outer normal of $K_1$ coincides with $n$. Set
\begin{subequations}
\label{jumps+averages}
\begin{equation}
\label{ja:interior-faces}
 \Jump{v}{F}
 := 
 v_{|K_1} - v_{|K_2},
\quad
 \Avg{v}{F}
 :=
 \frac{1}{2} \left( v_{|K_1} + v_{|K_2} \right)
\quad
 \text{on }F.
\end{equation}
The fact that the signs of $\nF$ and $\Jump{v}{F}$ depend on the ordering of $K_1$ and $K_2$ will be insignificant to our discussion. It will be convenient to extend these definitions on $\partial\Omega$. Given  $F \in \FacesMbnd$, let $K \in \Mesh$ be the element such that $F = K \cap \partial \Domain$ and set
\begin{equation}
\label{jumps-avgs@bdy}
 \Jump{v}{F}
 :=
 \Avg{v}{F}
 :=
 v_{|K}
\quad
 \text{on }F.
\end{equation}
\end{subequations}
In this notation, piecewise integration by parts reads as follows: if $v,w \in H^1(\Mesh)$ and $j\in\{1,\dots,d\}$, then
\begin{equation}\begin{aligned}
\label{pwIbP}
 \int_{\Omega} (\partial_{j,\Mesh} v) w
 &-
 \int_{\Sigma\setminus\partial\Omega} \Jump{v}{F} \Avg{w}{F} \nF \cdot e_j
\\ &=
 -\int_{\Omega} v (\partial_{j,\Mesh} w)
 +
 \int_{\Sigma\setminus\partial\Omega}
  \Avg{v}{F}  \Jump{w}{F} \nF \cdot e_j
 +
 \int_{\partial\Omega} vw \, \nF \cdot e_j.
\end{aligned}\end{equation}
Notice that the surface integrals are independent of the orientation of $n$ and that, e.g., the singular part of the distributional derivative $\partial_jv$ is represented by means of the negative jumps $-\Jump{v}{F}$, $F\in\FacesMint$.

Given $\degree\in\N_0$, we write $\Poly{\degree}(C)$ for the linear space of \emph{polynomials} on the $n$-simplex $C$ with (total) degree $\leq\degree$. Consider $p\in\N$, excluding the trivial case $p=0$. A polynomial in $\Poly{\degree}(C)$ is determined by its point values at the Lagrange nodes 
$\LagNod{\degree}(C)$ of order $\degree$, which, for $\degree\geq2$, are given by $\big\lbrace x \in C \mid \forall z \in \LagNod{1}(C) \;
p\lambda_z^C (x) \in \N_0 \big\rbrace$. We let $\Psi_{C,z}^\degree$, $z\in\LagNod{\degree}(C)$, denote the associated nodal basis in $\Poly{\degree}(C)$ given by $\Psi_{C,z}^\degree(y) = \delta_{zy}$ for all $y,z\in\LagNod{\degree}(C)$. The Lagrange nodes are nested in that $\LagNod{\degree}(F) = \LagNod{\degree}(C) \cap F$ for any face $F\in\Faces{C}$. Thus, the restriction $P_{|F}$ of $P\in\Poly{\degree}(C)$ is determined by the `restriction' $\LagNod{\degree}(C) \cap F$ of the Lagrange nodes and we have
$\Psi_{C,z}^\degree{}_{|F} = \Psi_{F,z}^\degree$ for all $z\in\LagNod{\degree}(F)$.

Given $k,\degree\in\N_0$, the space of functions that are \emph{piecewise polynomial} with degree $\leq\degree$ and are in $H^k_0(\Omega)$ is
\begin{equation}
\label{Skp}
 S^k_\degree
 :=
 \left\{ s \in H^k_0(\Omega) \mid
  \forall K\in\Mesh \; s_{|K} \in \Poly{\degree}(K)
 \right\}.
\end{equation}
The cases $p\in\N$ with $k\in\{0,1\}$ are of particular interest.

Consider first $\Polyell{\degree}$ with $p\in\N$ and extend each $\Psi_{K,z}^\degree$ outside of $K\in\Mesh$ by $0$. The functions $\{\Psi_{K,z}^\degree\}_{K\in\Mesh,z\in\LagNod{\degree}(K)}$ form a basis of $\Polyell{\degree}$ with $\Psi_{K,z}{}_{|K'}(z') = \delta_{K,K'}\delta_{z,z'}$ for $K,K'\in\Mesh$ and $z\in\LagNod{\degree}(K)$, $z'\in\LagNod{\degree}(K')$, which amounts to distinguishing Lagrange nodes from different elements.

The construction of a basis of $\Lagr{\degree}$ is a little more involved. Here, identifying coinciding Lagrange nodes, we set $\LagNod{\degree} := \cup_{K\in\Mesh} \LagNod{\degree}(K)$ as well as $\LagNodMint{\degree} := \LagNod{\degree} \setminus \partial\Omega$, and write $\Phi_z^\degree$, $z\in\LagNod{\degree}$, for the function given piecewise by $\Phi_z^\degree{}_{|K} := \Psi_{K,z}^\degree$ if $z\in K$ and $\Phi_z^\degree{}_{|K} := 0$ otherwise. Then the nestedness of Lagrange nodes implies: $\{\Phi_z^\degree\}_{z \in \LagNodMint{\degree}}$ is a basis of $\Lagr{\degree}$ satisfying $\Phi^p_z(y) = \delta_{zy}$ for all $y,z\in\LagNodMint{\degree}$. In connection with these basis functions, the following subdomains are useful. Let $\omega_z:= \bigcup_{K' \ni z} K'$ be the star around $z\in\LagNod{\degree}$ and let $\omega_K := \bigcup_{K'\cap K \neq \emptyset} K'$ be the patch around $K \in \Mesh$. Since $\partial\Omega$ is Lipschitz, stars are face-connected in the sense of \cite{Veeser:16}: given $z\in\LagNod{\degree}$ and any pair $K,K'\in\Mesh$ with $z\in K \cap K'$, there exists a path $\{K_i\}_{i=1}^n\subset\Mesh$ of elements containing $z$ such that $K_1=K$, $K_n=K'$, and each $K_i\cap K_{i+1} \in\FacesMint$.

If not specified differently, $C_*$ stands for a function which is not necessarily the same at each occurrence and depends on a subset $*$ of $\{d,\Shape_\Mesh,\degree\}$, increasing in $\Shape_\Mesh$ and $\degree$ if present. For instance, we have, for $K,K' \in \Mesh$,
\begin{equation}
\label{adjacent-elements}
 K \cap K' \neq \emptyset
\quad\implies\quad
 \snorm{K} \leq C_{\Shape_\Mesh} \snorm{K'}
\text{ and }
 h_K \leq C_{\Shape_\Mesh}\rho_{K'}
\end{equation}
and, for $p\in\N$, $K \in \Mesh$, and $z\in\LagNod{\degree}(K)$,
\begin{equation}
\label{Lagrange-basis:scaling}
 c_{d,\degree} |K|^{\frac{1}{2}} h_{K}^{-1}
\leq
\norm{\nabla \Psi_{K,z}^\degree}_{\Leb{K}}
\leq
C_{d,\degree} |K|^{\frac{1}{2}} \rho_{K}^{-1}.
\end{equation}
If there is no danger of confusion, $A \leq C_* B$ may be abbreviated as $A \Cleq B$.
\subsection{Quasi-optimal DG methods for the Poisson problem}
\label{S:DG-for-Poisson}
In this subsection we devise quasi-optimal DG methods for the Poisson problem, covering the results illustrated in the introduction \S\ref{S:introduction}.

Let $\Omega$ and $\Mesh$ be as in \S\ref{S:simplices-meshes} and, with $\eta\geq0$, define
\begin{equation}
\label{a_eta}
(v,w)_{1;\eta}
:=
\int_{\Omega} \GradM v \cdot \GradM w
+
\sum_{F\in\FacesM} \frac{\eta}{h_F} \int_F \Jump{v}{F} \Jump{w}{F},
\quad
\snorm{v}_{1;\eta} := (v,v)_{1;\eta}^{\frac{1}{2}}
\end{equation}
on $H^1(\Mesh)$ and abbreviate $(\cdot,\cdot)_{1;0}$ to $(\cdot,\cdot)_1$. Recalling \eqref{Skp}, we consider
\begin{equation}
\label{DG-for-Poisson:setting}
\begin{gathered}
 V = \SobH{\Domain},
\quad
 S = \Polyell{\degree} \text{ with }\degree \in \N,
\quad
 \aext = (\cdot,\cdot)_{1;\eta}
 \text{ on } \Vext = \SobH{\Domain} + \Polyell{\degree}. 
\end{gathered}
\end{equation}
Then $\aext$ is a scalar product for $\eta>0$ and the abstract problem \eqref{ex-prob} provides a weak formulation of \eqref{intro-Poisson}. Our setting has two parameters: the polynomial degree $\degree$ and the scaling factor $\eta$ of the jumps. The latter will be also the penalty parameter and is essentially free to be specified by the user. In order to keep notation simple, we shall sometimes suppress the dependencies on $\degree$ and $\eta$. The conforming part of $\Polyell{\degree}$ is the strict subspace
\begin{equation}
\label{DG-Poisson:conforming-part}
 \Polyell{\degree} \cap \SobH{\Domain}
 =
 \Lagr{\degree}
 =
 \{ s \in \Polyell{\degree} \mid \forall F \in \FacesM \; \Jump{s}{F}  \equiv 0  \}.
\end{equation}
Moreover, we easily see that
\begin{equation}
\label{Poisson:no-nondegeneracy}
 \emptyset
 \neq
 \Polyell{0}
 \subseteq
 \Polyell{\degree} \cap V^\perp,
\end{equation}
which precludes overconsistency in light of Lemma~\ref{L:a(.,E.)}.

In order to obtain hints for a suitable choice of the smoothing operator, we invoke integration by parts and the piecewise structure of $\Polyell{p}$. Let $s,\sigma \in \Polyell{\degree}$ be arbitrary. On the one hand, piecewise integration by parts \eqref{pwIbP} yields
\begin{equation*}
 (s,\smt\sigma)_{1;\eta}
 =
 \sum_{K\in\Mesh} \int_K (-\Delta s) \smt\sigma
 +
 \sum_{F \in\FacesMint} \int_F \Jump{\nabla s}{F}\cdot \nF \smt\sigma
\end{equation*}
due to $E\sigma \in \SobH{\Domain}$. On the other hand, we want $\int_{\Omega} \GradM s \cdot \GradM \sigma=(s,\sigma)_1$ to appear in the discrete bilinear form. For this term, \eqref{jumps-avgs@bdy} and \eqref{pwIbP} give 
\begin{equation*}
\begin{aligned}
 (s,\sigma)_1
 =
 \sum_{K\in\Mesh} \int_K (-\Delta s) \sigma
 &+
 \sum_{F \in\FacesMint} \int_F \Jump{\nabla s}{F}\cdot \nF \Avg{\sigma}{F}
 &+
 \sum_{F \in\FacesM} \int_F
 \Avg{\nabla s}{F}\cdot \nF \Jump{\sigma}{F}.
\end{aligned}
\end{equation*}
A comparison of these two identities suggests that the smoothing operator $E$ should conserve certain moments on faces and elements and proves the following lemma.  Such moment conservation was already used in Badia et al.\ \cite[\S6]{Badia.Codina.Gudi.Guzman:14} to design a DG method for the Stokes problem with a partial quasi-optimality result for the velocity field as well as in \cite[\S\S3.2 and 3.3]{Veeser.Zanotti:17p2} to construct overconsistent Crouzeix-Raviart-like methods of arbitrary fixed order.
\begin{lemma}[Conservation of moments] 
\label{L:Poisson:moment-conservation}
Let $\degree\in\N$ and, for notational convenience, set $\Poly{-1}(K)=\emptyset$ for all $K\in\Mesh$. If the linear operator $\smt:\Polyell{\degree} \to \SobH{\Domain}$ satisfies
\begin{equation}
\label{DG-for-Poisson:conditions-for-E}
 \int_F q (\smt \sigma) 
 =
 \int_F q \Avg{\sigma}{F}
\quad\text{and}\quad
 \int_K r (\smt \sigma)  
 = 
 \int_K r \sigma 
\end{equation}
for all $F\in\FacesMint$, $q\in\Poly{\degree-1}(F)$, $K\in\Mesh$, $r\in \Poly{\degree-2}(K)$ and $\sigma\in\Polyell{\degree}$, then
\begin{equation*}
 (s, \smt\sigma)_{1;\eta} 
 =
 \int_\Domain \GradM s \cdot \GradM \sigma
 -
 \sum \limits_{F \in \FacesM}
	\int_F \Avg{\Grad s }{F} \cdot \nF \Jump{\sigma}{F} 
\end{equation*}
for all $s,\sigma \in S$.
\end{lemma}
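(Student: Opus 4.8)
The plan is to start from the piecewise integration by parts identity \eqref{pwIbP} applied componentwise to the gradient, exactly as in the two displays preceding the lemma. Fix $s,\sigma \in S = \Polyell{\degree}$. Since $\smt\sigma \in \SobH{\Domain}$ has no jumps across interior faces and vanishes on $\partial\Omega$, applying \eqref{pwIbP} with $v = \partial_{j,\Mesh} s$ and $w = \smt\sigma$, then summing over $j=1,\dots,d$, gives
\[
 (s,\smt\sigma)_{1;\eta}
 =
 \int_\Domain \GradM s\cdot\GradM(\smt\sigma)
 =
 \sum_{K\in\Mesh}\int_K (-\LaplM s)\,\smt\sigma
 +
 \sum_{F\in\FacesMint}\int_F \Jump{\Grad s}{F}\cdot\nF\,\smt\sigma,
\]
where I used that the jump term on $\partial\Omega$ drops because $\smt\sigma$ vanishes there, and that $(s,\smt\sigma)_{1;\eta}=(s,\smt\sigma)_1$ since $\Jump{\smt\sigma}{F}\equiv0$ on every $F\in\FacesM$. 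Likewise, a second application of \eqref{pwIbP} (or the second display above the lemma) yields
\[
 \int_\Domain\GradM s\cdot\GradM\sigma
 =
 \sum_{K\in\Mesh}\int_K(-\LaplM s)\sigma
 +
 \sum_{F\in\FacesMint}\int_F\Jump{\Grad s}{F}\cdot\nF\Avg{\sigma}{F}
 +
 \sum_{F\in\FacesM}\int_F\Avg{\Grad s}{F}\cdot\nF\Jump{\sigma}{F}.
\]

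Next I would subtract: the claimed identity is equivalent to showing that the right-hand side of the first display equals the right-hand side of the second display minus the last face sum, i.e.
\[
 \sum_{K\in\Mesh}\int_K(-\LaplM s)\,\smt\sigma
 +
 \sum_{F\in\FacesMint}\int_F\Jump{\Grad s}{F}\cdot\nF\,\smt\sigma
 =
 \sum_{K\in\Mesh}\int_K(-\LaplM s)\,\sigma
 +
 \sum_{F\in\FacesMint}\int_F\Jump{\Grad s}{F}\cdot\nF\Avg{\sigma}{F}.
\]
This follows term by term from the two moment-conservation hypotheses in \eqref{DG-for-Poisson:conditions-for-E}: on each $K\in\Mesh$, $s_{|K}\in\Poly{\degree}(K)$ so $-\Lapl s_{|K}\in\Poly{\degree-2}(K)$ (this is where the convention $\Poly{-1}(K)=\emptyset$ handles the case $\degree=1$, where $\Lapl s\equiv0$ and both element terms vanish), hence the second condition in \eqref{DG-for-Poisson:conditions-for-E} with $r=-\Lapl s_{|K}$ gives $\int_K(-\Lapl s)\smt\sigma=\int_K(-\Lapl s)\sigma$. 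Similarly, on each interior face $F$, the restricted normal component $\Jump{\Grad s}{F}\cdot\nF$ is a polynomial in $\Poly{\degree-1}(F)$ (being the trace on $F$ of a component of a piecewise gradient of degree $\leq\degree$; use the nestedness $\Psi_{C,z}^\degree{}_{|F}=\Psi_{F,z}^\degree$), so the first condition with $q=\Jump{\Grad s}{F}\cdot\nF$ gives $\int_F\Jump{\Grad s}{F}\cdot\nF\,\smt\sigma=\int_F\Jump{\Grad s}{F}\cdot\nF\Avg{\sigma}{F}$. Summing over $K$ and $F$ closes the argument.

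There is no real obstacle here; the proof is a bookkeeping exercise with \eqref{pwIbP}. The only point requiring a little care is the degree count: one must check that $-\Lapl s_{|K}\in\Poly{\degree-2}(K)$ and that $\Jump{\Grad s}{F}\cdot\nF\in\Poly{\degree-1}(F)$ so that the test polynomials $r$ and $q$ in \eqref{DG-for-Poisson:conditions-for-E} may legitimately be chosen as these quantities — and to observe that for $\degree=1$ the element contributions are simply absent by the stated convention $\Poly{-1}(K)=\emptyset$, so that \eqref{DG-for-Poisson:conditions-for-E} imposes nothing on elements in that case. A secondary bookkeeping point is the correct treatment of boundary faces: in the first identity the term $\int_{\partial\Omega}(\partial_{j,\Mesh}s)\,\smt\sigma\,\nF\cdot e_j$ from \eqref{pwIbP} vanishes because $\smt\sigma\in\SobH{\Domain}$, and in the second identity the boundary faces contribute only through $\Avg{\Grad s}{F}\cdot\nF\Jump{s}{F}=\Avg{\Grad s}{F}\cdot\nF\,s_{|K}$ via the convention \eqref{jumps-avgs@bdy}, which is exactly the boundary part of the last face sum; hence no boundary term survives the subtraction. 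Everything else is routine.
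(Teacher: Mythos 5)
Your proof is correct and follows essentially the paper's own route: the two integration-by-parts identities you derive are exactly the displays preceding the lemma in the paper, and the conclusion is obtained, as there, by comparing them and invoking \eqref{DG-for-Poisson:conditions-for-E} with $r=-\Lapl s_{|K}\in\Poly{\degree-2}(K)$ and $q=\Jump{\Grad s}{F}\cdot\nF\in\Poly{\degree-1}(F)$, with the convention $\Poly{-1}(K)=\emptyset$ covering $\degree=1$. (Only a cosmetic slip: in your boundary-face remark $\Jump{s}{F}$ should read $\Jump{\sigma}{F}$.)
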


We adapt the construction of the smoothing operators in \cite{Veeser.Zanotti:17p2} to the given setting and begin with a so-called bubble smoother, which employs the following weighted $L^2$-projections associated to faces and elements. For every interior face $F\in\FacesMint$, let $Q_F:\Leb{F} \to \Poly{\degree-1}(F)$ be given by
\begin{equation}
\label{Q_F}
\forall q \in \Poly{\degree-1}(F)
\quad
\int_{F} (Q_Fv) q \, \Phi_F = \int_{F} v q,
\end{equation}
where $\Phi_F := \prod_{z \in \LagNod{1}(F)} \Phi_z^1 \in \Lagr{d}$ is the face bubble function supported in the two elements containing $F$.  Moreover, for every mesh element $K \in \Mesh$, set $Q_K=0$ if $p=1$, otherwise let $Q_K: \Leb{K} \to \Poly{\degree-2}(K)$ be given by
\begin{equation}
\label{Q_K}
\forall r \in \Poly{\degree-2}(K)
\quad
\int_{K} (Q_Kv) r \, \Phi_K = \int_{K} v r,
\end{equation}
where $\Phi_K := \prod_{z\in\LagNod{1}(K)} \Phi^1_z \in \Lagr{d+1}$ is the element bubble function with support $K$. For $v \in H^1(\Mesh)$, we then define the global bubble operators
\begin{equation*}
\BubbOper[\Mesh,\degree] \sigma
:=
\sum_{K \in \Mesh} (Q_K v) \Phi_K,
\quad
\BubbOper[\FacesM,\degree] v
:=
\sum_{F \in \FacesMint} \sum_{z \in \LagNod{\degree-1}(F)}
 \big( Q_F \Avg{v}{F} \big)(z) \Phi_z^{\degree-1} \Phi_F,
\end{equation*}
where $\BubbOper[\FacesM,\degree]$ incorporates an extension by means of Lagrange basis functions in view of the partition of unity $\sum_{z \in \LagNod{\degree-1}(F)} \Phi_z^{\degree-1} = 1$. Their combination provides  the desired property and an extension of the operator with the same name in \cite{Veeser.Zanotti:17p2}.

\begin{lemma}[Bubble smoother]
\label{L:DG-bubble-smoother}
For $p\in\N$, the linear operator $\BubbOper[\degree]:\Polyell{\degree} \to \SobH{\Domain}$ defined by
\begin{equation*}
 \BubbOper[\degree] \sigma
 :=
 \BubbOper[\FacesM,\degree] \sigma
 +
 \BubbOper[\Mesh,\degree](\sigma - \BubbOper[\FacesM,\degree] \sigma)
\end{equation*}
satisfies \eqref{DG-for-Poisson:conditions-for-E} and the local stability estimate
\begin{equation*}
 \norm{\nabla \BubbOper[\degree] \sigma}_{\Leb{K}}
 \leq
 \frac{C_{d,\degree}}{\rho_K} \left(
  \sup_{r\in\Poly{\degree-2}(K)}
   \frac{\int_K \sigma r}{\norm{r}_{\Leb{K}}}
  + \sum_{F\in\Faces{K}}
	 \frac{|K|^{\frac{1}{2}}}{|F|^{\frac{1}{2}}}
	 \sup_{q\in\Poly{\degree-1}(F)}
	  \frac{\int_F \Avg{\sigma}{F} q}{\norm{q}_{\Leb{F}}}
 \right).
\end{equation*}
\end{lemma}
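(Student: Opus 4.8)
The plan is to verify the two stated properties of $\BubbOper[\degree]$ separately: first the moment-conservation identities \eqref{DG-for-Poisson:conditions-for-E}, then the local stability estimate.

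\textbf{Moment conservation.} I would proceed in two stages, mirroring the two-summand structure of the operator. First I claim that $\BubbOper[\FacesM,\degree]$ alone already conserves the face moments, i.e.\ $\int_F q\,\BubbOper[\FacesM,\degree]\sigma = \int_F q\Avg{\sigma}{F}$ for all $F\in\FacesMint$ and $q\in\Poly{\degree-1}(F)$. This follows because, on a fixed face $F$, all terms of $\BubbOper[\FacesM,\degree]$ coming from other faces vanish on $F$ (their bubble factor $\Phi_{F'}$ vanishes on $F$, using the face-to-face property and that $\Phi_{F'}$ is supported in the two elements sharing $F'$), while the $F$-term restricted to $F$ equals $\big(\sum_z (Q_F\Avg{\sigma}{F})(z)\Phi_z^{\degree-1}\big)\Phi_F = (Q_F\Avg{\sigma}{F})\Phi_F$ by the partition of unity; then the defining relation \eqref{Q_F} of $Q_F$ gives exactly $\int_F q (Q_F\Avg{\sigma}{F})\Phi_F = \int_F q\Avg{\sigma}{F}$. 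Here one must also check the boundary faces don't interfere, which is automatic since $\BubbOper[\FacesM,\degree]$ only sums over $\FacesMint$. Second, I observe that the correction term $\BubbOper[\Mesh,\degree](\sigma - \BubbOper[\FacesM,\degree]\sigma)$ is a sum of element bubbles $\Phi_K$, each vanishing on $\Sigma$; hence it does not affect any face moment, so $\BubbOper[\degree]$ still conserves face moments. For the element moments, fix $K$ and $r\in\Poly{\degree-2}(K)$: the face term contributes $\int_K r\,\BubbOper[\FacesM,\degree]\sigma$, and the element term contributes $\int_K r (Q_K(\sigma-\BubbOper[\FacesM,\degree]\sigma))\Phi_K = \int_K r(\sigma - \BubbOper[\FacesM,\degree]\sigma)$ by \eqref{Q_K} (terms from other elements $K'\neq K$ vanish on $K$); these add up to $\int_K r\sigma$, as desired. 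This establishes \eqref{DG-for-Poisson:conditions-for-E}.

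\textbf{Local stability.} The estimate for $\nabla\BubbOper[\degree]\sigma$ on $K$ splits into the contribution of $\BubbOper[\FacesM,\degree]$ and that of the element-bubble correction. For the face part, on $K$ only the faces $F\in\Faces{K}\cap\FacesMint$ contribute, and each summand is a fixed polynomial (of degree $\leq 2d+\degree-1$) on the reference simplex; a scaling/equivalence-of-norms argument on the reference element, combined with \eqref{Lagrange-basis:scaling}, bounds $\norm{\nabla(\Phi_z^{\degree-1}\Phi_F)}_{\Leb{K}}$ by $C_{d,\degree}|K|^{1/2}\rho_K^{-1}$, and the coefficients $(Q_F\Avg{\sigma}{F})(z)$ are controlled, via equivalence of norms on $\Poly{\degree-1}(F)$ and the definition of $Q_F$ (testing against $q = Q_F\Avg{\sigma}{F}\,\Phi_F$, using that $\Phi_F$ is bounded and that its weighted and unweighted $L^2$ norms on $\Poly{\degree-1}(F)$ are equivalent by positivity of $\Phi_F$ on the interior), by $C_{d,\degree}|F|^{-1/2}\sup_{q\in\Poly{\degree-1}(F)}\int_F\Avg{\sigma}{F}q/\norm{q}_{\Leb{F}}$; assembling and using \eqref{adjacent-elements} to pass from $|K'|$ (the neighbour sharing $F$) to $|K|$ where needed yields the second group of terms in the bound. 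For the element-bubble correction, $Q_K$ acts on $\sigma - \BubbOper[\FacesM,\degree]\sigma$, so by the triangle inequality and the analogous weighted-projection estimate for $Q_K$ one gets $\norm{\nabla(\BubbOper[\Mesh,\degree](\cdots))}_{\Leb{K}} \lesssim \rho_K^{-1}\sup_{r\in\Poly{\degree-2}(K)}\int_K(\sigma-\BubbOper[\FacesM,\degree]\sigma)r/\norm{r}_{\Leb{K}}$; the $\sigma$-part gives the first term of the claimed bound, and the $\BubbOper[\FacesM,\degree]\sigma$-part is reabsorbed into the face terms using the $L^2$ estimates just derived together with $\norm{\Phi_z^{\degree-1}\Phi_F}_{\Leb{K}}\lesssim |K|^{1/2}$ and the scaling $\norm{r}_{\Leb{K}}$ versus $\norm{q}_{\Leb{F}}$ bookkeeping (the factor $|K|^{1/2}/|F|^{1/2}$).

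The main obstacle is the bookkeeping in the stability estimate: tracking the exact powers of $h_K$, $\rho_K$, $|K|$ and $|F|$ through the reference-element scalings and the neighbour-comparison \eqref{adjacent-elements}, and showing that the extra contribution of $\BubbOper[\FacesM,\degree]\sigma$ inside the $Q_K$ argument is genuinely dominated by the face terms already present on the right-hand side rather than producing a spurious new term. The moment-conservation part, by contrast, is essentially a direct computation once the locality of the bubble functions is used; the one point requiring a little care there is confirming that the face-bubble terms associated with faces other than $F$ — and all boundary faces — truly vanish on $F$, which rests on the face-to-face mesh hypothesis and the supports of $\Phi_{F'}$ and $\Phi_K$. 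This argument is essentially the one carried out in \cite[\S\S3.2--3.3]{Veeser.Zanotti:17p2}, here adapted to incorporate the averaging $\Avg{\cdot}{F}$ and the completely discontinuous space $\Polyell{\degree}$.
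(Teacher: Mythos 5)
Your argument is correct and follows the route the paper intends: the paper's proof is a one-line pointer to Lemma~3.8 of \cite{Veeser.Zanotti:17p2}, and your two-stage verification of moment conservation (face bubbles vanish on other faces, element bubbles vanish on the skeleton, then the defining relations of $Q_F$, $Q_K$) together with the scaling/weighted-norm-equivalence argument for the local stability bound is exactly the adaptation of that lemma to $\Polyell{\degree}$ with averages. One small remark: the claimed constant is $C_{d,\degree}$, with no $\Shape_\Mesh$-dependence, so the appeal to \eqref{adjacent-elements} in your stability bookkeeping is superfluous — every factor $|K|^{\frac12}$, $|F|^{\frac12}$, $\rho_K$ arising in the face and element contributions already refers to the current element $K$ and its own faces, and no neighbour comparison is needed (nor should it be, since it would introduce a spurious shape dependence).
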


\begin{proof}
Proceed as in the proof of \cite[Lemma~3.8]{Veeser.Zanotti:17p2}.
\end{proof}

The factor $\rho_K^{-1}$ in the stability estimate in Lemma~\ref{L:DG-bubble-smoother} suggests that $\BubbOper[\degree]$ is not uniformly stable under refinement. The example in \cite[Remark~3.5]{Veeser.Zanotti:17p2} confirms this also for the current setting. However, since the bound involves lower order norms, we have the possibility to stabilize. This will be done with the help of the following variant $\AvgOper{\degree}:\Polyell{\degree} \to \Lagr{\degree}$ of nodal averaging. For every interior node $z\in\LagNodMint{\degree}$, fix some element $K_z \in \Mesh$ containing $z$ and set
\begin{equation}
\label{Averaging-def}
\textstyle
\AvgOper{\degree} \sigma
:=
\sum_{z\in\LagNodMint{\degree}} \sigma_{|K_z} (z) \Phi^\degree_z,
\qquad
\sigma \in \Polyell{\degree}.
\end{equation}
Clearly, $\AvgOper{\degree}\sigma(z) = \sigma (z)$ whenever $\sigma$ is continuous at $z\in\LagNodMint{\degree}$ and so $\AvgOper{\degree}$ is a projection onto $\Lagr{\degree}$. On the one hand, the operator $\AvgOper{q}$ is a restriction of Scott-Zhang interpolation \cite{Scott.Zhang:90} defined for broken $H^1$-functions and, on the other hand, it is a simplified variant of nodal averaging in that it requires only one evaluation per degree of freedom. Nodal averaging has been used in various nonconforming contexts, see, e.g., Brenner \cite{Brenner:96}, Karakashian/Pascal \cite{Karakashian.Pascal:03}, Oswald \cite{Oswald:93}. It maps into $\SobH{\Domain}$ along with the following error bound in terms of jumps.
\begin{lemma}[Simplified nodal averaging and $L^2$-norms of jumps]
\label{L:averaging-for-IP}
Let $p\in\N$, $\sigma \in \Polyell{\degree}$ piecewise polynomial, $K \in \Mesh$, and $z \in \LagNod{\degree}(K)$ be a Lagrange node. If $z \not\in \partial K$, then $\AvgOper{\degree}\sigma(z) = \sigma_{|K}(z)$, else
\begin{equation*}
	%
 \snorm{\sigma_{|K}(z) - \AvgOper{\degree}\sigma(z)}
 \leq
	%
 C_{d,p} \sum \limits_{F \in \FacesM: F \ni z}
  \dfrac{1}{\snorm{F}^{\frac{1}{2}}} \norm{\Jump{\sigma}{F}}_{\Leb{F}}.
\end{equation*}
\end{lemma}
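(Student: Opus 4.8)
The plan is to work locally, node by node: fix a Lagrange node $z \in \LagNod{\degree}(K)$ lying on $\partial K$ and compare $\sigma_{|K}(z)$ with $\AvgOper{\degree}\sigma(z) = \sigma_{|K_z}(z)$, where $K_z$ is the element chosen in the definition \eqref{Averaging-def}. If $z \notin \partial K$ the two evaluations agree by construction (the node belongs only to $K$, so $K_z = K$), which is the first, trivial case. Otherwise, the key point is that $z$ lies in the closure of several elements, and by the face-connectedness of stars recalled in \S\ref{S:simplices-meshes}, there is a path $K = K_1, K_2, \dots, K_n = K_z$ of elements all containing $z$, with each consecutive intersection $K_i \cap K_{i+1}$ an interior face $F_i \in \FacesMint$ through $z$. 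The length $n$ of this path is bounded in terms of $d$ and $\Shape_\Mesh$, since the number of elements meeting at a node is.

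First I would telescope along the path:
\begin{equation*}
 \sigma_{|K}(z) - \AvgOper{\degree}\sigma(z)
 =
 \sum_{i=1}^{n-1} \big( \sigma_{|K_i}(z) - \sigma_{|K_{i+1}}(z) \big),
\end{equation*}
so that each summand is, up to sign, the value of the jump $\Jump{\sigma}{F_i}$ evaluated at the node $z$. Hence it suffices to bound $\snorm{\Jump{\sigma}{F_i}(z)}$ by $\snorm{F_i}^{-1/2} \norm{\Jump{\sigma}{F_i}}_{\Leb{F_i}}$ times a constant depending only on $d$ and $\degree$. This is a standard finite-dimensional equivalence of norms: $\Jump{\sigma}{F_i}$ restricted to $F_i$ is a polynomial in $\Poly{\degree}(F_i)$ (the restriction to a face of a polynomial of degree $\leq \degree$ on an element is again of degree $\leq \degree$, using the nestedness of Lagrange nodes noted in \S\ref{S:simplices-meshes}), and on the reference $(d-1)$-simplex the functionals $P \mapsto P(\hat z)$ are bounded with respect to $\norm{\cdot}_{L^2}$ with a constant depending only on $d$ and $\degree$; scaling $F_i$ to its diameter then produces exactly the factor $\snorm{F_i}^{-1/2}$, using shape regularity to compare $h_{F_i}^{d-1}$ with $\snorm{F_i}$ only if one prefers diameters, but in fact the pure volume scaling of the $L^2$-norm already gives $\snorm{F_i}^{-1/2}$ with a constant depending only on $d,\degree$. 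Finally, I would absorb the path into a sum over \emph{all} faces $F \ni z$: each $F_i$ is one of the finitely many faces through $z$, so
\begin{equation*}
 \snorm{\sigma_{|K}(z) - \AvgOper{\degree}\sigma(z)}
 \leq
 \sum_{i=1}^{n-1} \snorm{\Jump{\sigma}{F_i}(z)}
 \leq
 C_{d,\degree}\sum_{F \in \FacesM : F \ni z} \frac{1}{\snorm{F}^{1/2}} \norm{\Jump{\sigma}{F}}_{\Leb{F}},
\end{equation*}
which is the claimed estimate.

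The main obstacle is not any single calculation but keeping the constant clean and genuinely independent of the mesh: one must make sure that the path length $n$, the number of faces through $z$, and the reference-element trace/evaluation constants all depend only on $d$, $\degree$, and $\Shape_\Mesh$, and in particular that the face-connectedness invoked from \cite{Veeser:16} is available for all Lagrange nodes (not just vertices) — this follows because a Lagrange node of order $\degree$ on an interior face lies in exactly the two elements sharing that face, or, if it lies on a lower-dimensional subsimplex, in the star of that subsimplex, which is face-connected by the same Lipschitz-boundary argument. A minor subtlety worth a sentence is the orientation/sign of $\Jump{\sigma}{F_i}$: since only $\snorm{\Jump{\sigma}{F_i}(z)}$ enters, the insignificance of the ordering of the two elements adjacent to $F_i$, already noted after \eqref{ja:interior-faces}, makes this harmless. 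Everything else is the routine scaling argument sketched above.
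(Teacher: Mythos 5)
Your proof is correct and follows the same route as the paper's: telescope along a face-connected path in the star of $z$ to reduce to a sum of jump values $\snorm{\Jump{\sigma}{F}_{|F}(z)}$ over faces through $z$, then apply the inverse estimate $\norm{\cdot}_{L^\infty(F)} \leq C_{d,\degree}\snorm{F}^{-1/2}\norm{\cdot}_{\Leb{F}}$ in $\Poly{\degree}(F)$. The only cosmetic difference is that the paper cites the telescoping inequality from part II rather than spelling it out, and you need not invoke a shape-regularity bound on the path length at all: choosing the path with distinct elements makes the $F_i$ distinct faces through $z$, so the partial sum is dominated by the full sum with constant $1$.
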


\begin{proof}
The `then'-part of the claim readily follows from the non-overlapping of elements in $\Mesh$. For the `else'-part, we first recall that $\Jump{\sigma}{F}_{|F}$ denotes the jump across the face $F$ and notice that its point values are well-defined. We thus can derive
\begin{equation*}
%
\snorm{\sigma_{|K}(z) - \AvgOper{\degree}(z)}
%
 \leq
 \sum \limits_{F\in\FacesM:F \ni z} 
 \snorm{\Jump{\sigma}{F}_{|F}(z)}. 
\end{equation*}
with the help of the face-connectedness of stars in $\Mesh$; cf.\ \cite[(3.6)]{Veeser.Zanotti:17p2}. Therefore, the inverse estimate $\norm{\cdot}_{\Lebp{\infty}{F}} \leq C_{d,\degree} \snorm{F}^{-\frac{1}{2}} \norm{\cdot}_{\Leb{F}}$ in $\Poly{\degree}(F)$ finishes the proof.
\end{proof}

Stabilizing the bubble smoother $\BubbOper[\degree]$ with simplified nodal averaging $\AvgOper{\degree}$, we obtain a smoothing operator with the desired properties.

\begin{proposition}[Stable smoothing with moment conservation]
\label{P:IP-smoother}
The linear operator $E_{\degree}:\Polyell{\degree}\to\SobH{\Domain}$ given by
\begin{equation*}
 E_{\degree}\sigma
 :=
 \AvgOper{\degree}\sigma
 +
 \BubbOper[\degree](\sigma - \AvgOper{\degree}\sigma)
\end{equation*}
is invariant on $\Lagr{\degree}$, satisfies \eqref{DG-for-Poisson:conditions-for-E} and, for all $\sigma \in \Polyell{\degree}$,
\begin{equation*}
 \norm{\GradM (\sigma - E_p\sigma)}_{\Leb{\Domain}}
 \leq
 C_{d,\Shape_\Mesh,\degree}
  \norm{ h^{-\frac{1}{2}} \Jump{\sigma}{} }_{\Leb{\Sigma}} 
\end{equation*}
\end{proposition}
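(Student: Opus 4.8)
The plan is to verify the three asserted properties of $E_\degree = \AvgOper{\degree} + \BubbOper[\degree](\id - \AvgOper{\degree})$ in turn, combining the already-established properties of the constituents $\AvgOper{\degree}$ and $\BubbOper[\degree]$. For invariance on $\Lagr{\degree}$: if $\sigma \in \Lagr{\degree}$, then $\AvgOper{\degree}\sigma = \sigma$ because $\AvgOper{\degree}$ is a projection onto $\Lagr{\degree}$, so $\sigma - \AvgOper{\degree}\sigma = 0$ and $\BubbOper[\degree]$ is applied to $0$, giving $E_\degree\sigma = \sigma$. For the moment conservation \eqref{DG-for-Poisson:conditions-for-E}: the key observation is that $\AvgOper{\degree}\sigma \in \Lagr{\degree} \subset \SobH{\Domain}$ is continuous, so $\Jump{\AvgOper{\degree}\sigma}{F} = 0$ and hence $\Avg{\AvgOper{\degree}\sigma}{F} = \AvgOper{\degree}\sigma_{|F}$ on each interior face; writing $\sigma = \AvgOper{\degree}\sigma + (\sigma - \AvgOper{\degree}\sigma)$ and applying Lemma~\ref{L:DG-bubble-smoother} (which tells us $\BubbOper[\degree]$ satisfies \eqref{DG-for-Poisson:conditions-for-E}) to the second summand, the face and element moment identities for $E_\degree\sigma$ reduce exactly to the corresponding moments of $\sigma$, because the averaging part contributes its own moments on both sides and the bubble part corrects the remainder. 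One has to check the face condition uses $\int_F q\,\Avg{\sigma - \AvgOper{\degree}\sigma}{F} = \int_F q\,(\Avg{\sigma}{F} - \AvgOper{\degree}\sigma_{|F})$, which is where continuity of $\AvgOper{\degree}\sigma$ enters.

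For the stability bound, which I expect to be the main obstacle, the plan is to estimate
\[
 \norm{\GradM(\sigma - E_\degree\sigma)}_{\Leb{\Domain}}
 =
 \norm{\GradM\big((\sigma - \AvgOper{\degree}\sigma) - \BubbOper[\degree](\sigma - \AvgOper{\degree}\sigma)\big)}_{\Leb{\Domain}}
 \leq
 \norm{\GradM(\sigma - \AvgOper{\degree}\sigma)}_{\Leb{\Domain}}
 +
 \norm{\GradM \BubbOper[\degree](\sigma - \AvgOper{\degree}\sigma)}_{\Leb{\Domain}}.
\]
For the first term, set $w := \sigma - \AvgOper{\degree}\sigma \in \Polyell{\degree}$; it vanishes at every interior Lagrange node $z \notin \partial K$ of each element $K$, and at boundary-type nodes Lemma~\ref{L:averaging-for-IP} controls $|w_{|K}(z)|$ by $\sum_{F \ni z} |F|^{-1/2}\norm{\Jump{\sigma}{F}}_{\Leb{F}}$ (note $\Jump{w}{F}=\Jump{\sigma}{F}$ since $\AvgOper{\degree}\sigma$ is continuous). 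Expanding $w_{|K}$ in the nodal basis $\Psi_{K,z}^\degree$ and using the gradient bound \eqref{Lagrange-basis:scaling} together with $h_K \leq C_{\Shape_\Mesh}\rho_K$ and the shape-regular comparison $|K| \le C_{\Shape_\Mesh}|F|$ from \eqref{adjacent-elements}, one gets $\norm{\GradM w}_{\Leb{K}} \leq C_{d,\Shape_\Mesh,\degree}\sum_{F \in \FacesM, F \cap \overline{K}\neq\emptyset} h_F^{-1/2}\norm{\Jump{\sigma}{F}}_{\Leb{F}}$; summing over $K$ and using finite overlap of patches gives the $\norm{h^{-1/2}\Jump{\sigma}{}}_{\Leb{\Sigma}}$ bound.

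For the second, bubble term I would invoke the local stability estimate of Lemma~\ref{L:DG-bubble-smoother} with $w = \sigma - \AvgOper{\degree}\sigma$ in place of $\sigma$. This bounds $\norm{\nabla\BubbOper[\degree]w}_{\Leb{K}}$ by $C_{d,\degree}\rho_K^{-1}$ times a sum of dual norms of element moments $\int_K w r / \norm{r}_{\Leb{K}}$ and face moments $\int_F \Avg{w}{F} q / \norm{q}_{\Leb{F}}$. Each such moment is bounded by $\norm{w}_{\Leb{K}}$ (resp.\ by $\norm{w}_{\Leb{F'}}$ over the two elements sharing $F$, via a trace or scaling argument), so the task reduces to estimating $\norm{w}_{\Leb{K}}$ — or equivalently $\norm{\sigma - \AvgOper{\degree}\sigma}$ in $L^2$ — in terms of jumps. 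Expanding $w$ in the nodal basis again and now using the $L^2$-size of $\Psi_{K,z}^\degree$ (comparable to $|K|^{1/2}$) rather than its gradient, the same pointwise bound from Lemma~\ref{L:averaging-for-IP} gives $\norm{w}_{\Leb{K}} \leq C_{d,\Shape_\Mesh,\degree}\sum_{F \cap \overline{K}\neq\emptyset} |K|^{1/2}|F|^{-1/2}\norm{\Jump{\sigma}{F}}_{\Leb{F}}$; since $\rho_K^{-1}|K|^{1/2}|F|^{-1/2} \le C_{\Shape_\Mesh} h_F^{-1/2}|F|^{1/2}|F|^{-1/2}/\text{(dim factor)} \simeq h_F^{-1/2}$ by shape regularity, this reproduces the desired form after summation over $K$. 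The delicate bookkeeping is keeping all constants dependent only on $d$, $\degree$, $\Shape_\Mesh$ and not on the mesh size — which is exactly why the factor $\rho_K^{-1}$ in Lemma~\ref{L:DG-bubble-smoother}, harmless here because it multiplies lower-order ($L^2$) quantities, gets absorbed against the $|K|^{1/2}$ gained from the nodal basis scaling; this cancellation is the crux of the stabilization and the step I would treat most carefully.
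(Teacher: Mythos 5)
Your proposal matches the paper's proof essentially step by step: invariance via the projection property of $\AvgOper{\degree}$, moment conservation by splitting $\sigma = \AvgOper{\degree}\sigma + (\sigma-\AvgOper{\degree}\sigma)$ and invoking the continuity of $\AvgOper{\degree}\sigma$ together with the conservation property of $\BubbOper[\degree]$ from Lemma~\ref{L:DG-bubble-smoother}, and the stability bound by the same triangle-inequality decomposition, nodal-basis expansion, Lemma~\ref{L:averaging-for-IP}, and the shape-regularity cancellation of $\rho_K^{-1}$ against the $|K|^{1/2}$ factor. Your routing through $\norm{w}_{\Leb{K}}$ (and the face analogue) before invoking Lemma~\ref{L:averaging-for-IP} is a minor reorganization of the paper's direct moment bounds, but it leads to the same intermediate estimate and the same conclusion.
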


\begin{proof}
We adapt the proof of Propositions~3.9 to the current setting with jumps in the extended energy norm.

Clearly, the operator $E_\degree$ is well-defined and maps into $\SobH{\Domain}$. With $\AvgOper{\degree}$, also $E_p$ is a projection onto $\Lagr{\degree}$. We next show that $E_p$ conserves the moments in \eqref{DG-for-Poisson:conditions-for-E}. Given any $F\in\FacesMint$ and any $q \in \Poly{\degree-1}(F)$, we arrange terms to exploit that $\BubbOper[\degree]$ conserves moments and get
\begin{equation}
\label{Ep-and-face-moments}
 \int_F (E_\degree\sigma)q
 =
 \int_F (\BubbOper[\degree] \sigma)q
 +
 \underbrace{\int_F (\AvgOper{\degree}\sigma - \BubbOper[\degree] \AvgOper{\degree}\sigma)q}_{=0}
 =
\int_F \Avg{\sigma}{F}q.
\end{equation}
Arguing similarly, we obtain also that the element moments in \eqref{DG-for-Poisson:conditions-for-E} are conserved.

Finally, we turn to the claimed stability bound. Let $\sigma \in \Polyell{\degree}$ and write
\begin{equation*}
\label{Ep-bound1}
 \norm{\GradM (\sigma - E_p\sigma)}_{\Leb{\Domain}}
 \leq
 \norm{\GradM(\sigma - \AvgOper{\degree} \sigma)}_{\Leb{\Domain}}
 +
 \norm{\nabla \BubbOper[\degree](\sigma - \AvgOper{\degree} \sigma)}_{\Leb{\Domain}}.
\end{equation*}
In order to bound the right-hand side, we fix a mesh element $K \in \Mesh$ and consider the first term. Employing $\Phi_z^\degree{}_{|K}=\Psi_{K,z}^\degree$ and \eqref{Lagrange-basis:scaling} and then Lemma \ref{L:averaging-for-IP}, we obtain
\begin{equation}
\label{Ep-bound-2'}
\begin{split}
 \|\nabla (\sigma & - \AvgOper{\degree} \sigma)\|_{\Leb{K}} 
 \leq
 \sum \limits_{z \in \LagNod{\degree}(K)}
 \snorm{\sigma_{|K}(z) - \AvgOper{\degree}\sigma(z)} 
 \norm{\nabla\Phi_z^\degree}_{\Leb{K}}
\\
 &\leq 
 C_{d,\Shape_\Mesh,\degree} \sum \limits_{z \in \LagNod{\degree}(K)}
  \snorm{\sigma_{|K}(z) - \AvgOper{\degree}\sigma(z)} 
   \dfrac{\snorm{K}^{\frac{1}{2}}}{\rho_K}
\\
&\leq 
 C_{d,\Shape_\Mesh,\degree} \sum \limits_{z \in \LagNod{\degree}(K)} \;
  \sum \limits_{F' \in \FacesM, F' \ni z}
   \dfrac{\snorm{K}^{\frac{1}{2}}}{\rho_K\snorm{F'}^{\frac{1}{2}}}
    \norm{\Jump{\sigma}{F'}}_{\Leb{F'}}
\end{split}
\end{equation}
If $K'\in\Mesh$ contains a face $F'$ of the sum, then \eqref{adjacent-elements} implies
\begin{equation*}
  \dfrac{\snorm{K}^{\frac{1}{2}}}{\rho_K\snorm{F'}^{\frac{1}{2}}}
  \leq
  \frac{h_K}{\rho_K} \left(
   \frac{h_K^{d-2}}{\rho_{K'}^{d-1}}
  \right)^{\frac{1}{2}}
  \Cleq
  \rho_{K'}^{-\frac{1}{2}}
  \Cleq
  h_{F'}^{-\frac{1}{2}}.
\end{equation*}
Consequently, with the help of $\#\{K'\in\Mesh\mid K'\subseteq \omega_K\} \leq C_{d,\Shape_\Mesh}$, we arrive at
\begin{equation}
\label{Ep-bound2}
 \|\nabla (\sigma - \AvgOper{\degree} \sigma)\|_{\Leb{K}} 
 \Cleq
 \left(
  \sum_{F \in \FacesM, F\cap K\neq\emptyset}
    h_F^{-1} \norm{\Jump{\sigma}{F}}_{\Leb{F}}^2
 \right)^{\frac{1}{2}}.
\end{equation}
Next, consider the second term and observe that \eqref{intregation-bary-coord} gives
\begin{equation*}
 \sup_{r\in\Poly{\degree-2}(K)}
 \frac{\int_K (\sigma - \AvgOper{\degree}\sigma)r}{\norm{r}_{\Leb{K}}}
 \leq
 C_{d,\degree} \snorm{K}^{\frac{1}{2}}
 \sum_{z \in \LagNod{\degree}(\partial K)}
  \snorm{\sigma_{|K}(z) - \AvgOper{\degree}\sigma(z)}
\end{equation*}
and, for every $F \in \Faces{K}$,
\begin{equation*}
 \sup_{q\in\Poly{\degree-1}(F)}
 \frac{\int_F (\Avg{\sigma}{F} - \AvgOper{\degree}\sigma)q}{\norm{q}_{\Leb{F}}}
 \leq
 C_{d,\degree} \snorm{F}^{\frac{1}{2}}
 \sum_{K' \supset F}
 \sum_{z \in \LagNod{\degree}(F)}
  \snorm{\sigma_{|K'}(z) - \AvgOper{\degree}\sigma(z)}.
\end{equation*}
Inserting these two bounds in the stability estimate of Lemma~\ref{L:DG-bubble-smoother}, we find essentially the bound after the second inequality in \eqref{Ep-bound-2'} and so also
\begin{equation}
\label{Ep-bound3}
 \norm{\nabla \BubbOper[\degree] (\sigma - \AvgOper{\degree} \sigma)}_{\Leb{K}}
 \Cleq
 \left(
 \sum_{F \in \FacesM, F\cap K\neq\emptyset}
  h_F^{-1} \norm{\Jump{\sigma}{F}}_{\Leb{F}}^2
 \right)^{\frac{1}{2}}.
\end{equation}
We arrive at the claimed inequality by summing \eqref{Ep-bound2} and \eqref{Ep-bound3} over all $K\in\Mesh$, observing that the number of elements touching a given face is $\leq C_{d,\Shape_\Mesh}$.
\end{proof}

The smoothing operator $E_{\degree}$ in Proposition~\ref{P:IP-smoother} is computationally feasible. In fact, we have that 
\begin{itemize}
\item it suffices to know the evaluations $\langle f, \Phi^{\degree}_z\rangle$ for $z \in \LagNodMint{\degree}$ as well as $\langle f, \Phi^{\degree-1}_z\Phi_F\rangle$ for $F\in\FacesMint$, $z \in \LagNod{\degree-1}(F)$, and $\langle f, \Phi^{\degree-2}_z\Phi_K\rangle$ for $K\in\Mesh$, $z \in \LagNod{\degree-2}(K)$,
\item the support of each $E_{\degree}\Psi_{K,z}^\degree$ is contained in $\omega_z$,
\item the operators $Q_F$ and $Q_K$ in \eqref{Q_F} and \eqref{Q_K} can be implemented via matrices associated with a reference element and, for $d=2$, $Q_F$ can be diagonalized by means of Legendre polynomials.
\end{itemize}

After having found a suitable smoothing operator, we now choose the bilinear form $\bCons(\cdot,\cdot)$. Recall that, due to \eqref{Poisson:no-nondegeneracy}, the bilinear form $(\cdot,E_\degree\cdot)_{1;\eta}$ is degenerate and so $\bCons(\cdot,\cdot)$ needs to be nontrivial. There are several choices; see, e.g., Arnold et al.\ \cite{Arnold.Brezzi.Cockburn.Marini:02}. Here we shall discuss the interplay between $\smt_\degree$ and some of them.

\subsubsection*{A quasi-optimal NIP method}
One possibility to achieve nondegeneracy is to employ the jump penalization in $(\cdot,\cdot)_{1;\eta}$. If, in addition, we neutralize the downgrading of coercivity by $-\int_{\Sigma} \Avg{\Grad s}{F}\cdot \nF \Jump{q}{F}$ in $(\cdot,E_p\cdot)_{1;\eta}$, we reestablish the bilinear form of the nonsymmetric interior penalty (NIP) method introduced in \cite{Riviere.Wheeler.Girault:01}:
\begin{equation}
\label{DG-Poisson:bnip}
 b_\NIP
 :=
 (\cdot,E_p\cdot)_{1;\eta} + \bCons_\NIP
\quad\text{with}\quad
 \bCons_\NIP(s,\sigma)
 :=
 \int_{\Sigma} 
 \Jump{s}{F}\Avg{\Grad \sigma }{F}\cdot \nF
 +
 \frac{\eta}{h_F}\Jump{s}{F} \Jump{\sigma}{F}.
\end{equation}

\begin{lemma}[$b_\NIP$ and extended energy norm]
\label{L:NIP-energy-norm}
For any penalty parameter $\eta>0$, we have
\begin{equation*}
 \forall s,\sigma \in S
 \quad
 b_\NIP(s,s) \geq \snorm{s}_{1;\eta}^2
  \quad\text{and}\quad
 b_\NIP(s,\sigma)
 \leq
 \left(
  1 + \sqrt{\eta^{-1}\eta_*}
 \right) \snorm{s}_{1;\eta} \snorm{\sigma}_{1;\eta},
\end{equation*}
where $\eta_*>0$ depends on $d$, $\degree$, and $\Shape_\Mesh$.
\end{lemma}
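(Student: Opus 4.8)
The plan is to prove the two inequalities separately, starting from the definition \eqref{DG-Poisson:bnip} of $b_\NIP = (\cdot, E_p\cdot)_{1;\eta} + \bCons_\NIP$ and then invoking Lemma~\ref{L:Poisson:moment-conservation} to rewrite the first term. Since $E_\degree$ satisfies \eqref{DG-for-Poisson:conditions-for-E} by Proposition~\ref{P:IP-smoother}, Lemma~\ref{L:Poisson:moment-conservation} gives, for $s,\sigma\in S$,
\begin{equation*}
 (s, E_p\sigma)_{1;\eta}
 =
 \int_\Domain \GradM s\cdot\GradM\sigma
 -
 \sum_{F\in\FacesM}\int_F \Avg{\Grad s}{F}\cdot\nF \Jump{\sigma}{F}.
\end{equation*}
Adding $\bCons_\NIP(s,\sigma) = \sum_{F\in\FacesM}\int_F \Jump{s}{F}\Avg{\Grad\sigma}{F}\cdot\nF + \sum_{F\in\FacesM}\frac{\eta}{h_F}\int_F\Jump{s}{F}\Jump{\sigma}{F}$, the $\Avg{\Grad s}{F}\cdot\nF\Jump{\sigma}{F}$ term over interior faces is the same whether one reads it via the smoother term or the penalty term; more precisely I expect the structure to collapse to
\begin{equation*}
 b_\NIP(s,\sigma)
 =
 \int_\Domain \GradM s\cdot\GradM\sigma
 +
 \sum_{F\in\FacesM}\frac{\eta}{h_F}\int_F\Jump{s}{F}\Jump{\sigma}{F}
 +
 \sum_{F\in\FacesM}\int_F\big(\Jump{s}{F}\Avg{\Grad\sigma}{F} - \Avg{\Grad s}{F}\Jump{\sigma}{F}\big)\cdot\nF,
\end{equation*}
i.e.\ $b_\NIP(s,\sigma) = (s,\sigma)_{1;\eta} + N(s,\sigma)$ where $N$ is the skew-symmetric consistency term $N(s,\sigma) := \sum_F\int_F(\Jump{s}{F}\Avg{\Grad\sigma}{F} - \Avg{\Grad s}{F}\Jump{\sigma}{F})\cdot\nF$. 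The key algebraic point to check is that the $\Avg{\Grad s}{F}\cdot\nF\Jump{\sigma}{F}$ contribution coming out of Lemma~\ref{L:Poisson:moment-conservation} combines correctly with the terms of $\bCons_\NIP$; this is the step I would write out most carefully, paying attention to signs.

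Granting this decomposition, coercivity is immediate: $N(s,s)=0$ by skew-symmetry, so $b_\NIP(s,s) = (s,s)_{1;\eta} = \snorm{s}_{1;\eta}^2$, which is even stronger (an equality) than the claimed inequality. For boundedness, Cauchy--Schwarz on the first two terms gives $(s,\sigma)_{1;\eta} \le \snorm{s}_{1;\eta}\snorm{\sigma}_{1;\eta}$. For $N$, I would estimate each face term by $\snorm{N(s,\sigma)} \le \big(\sum_F \frac{h_F}{\eta}\norm{\Avg{\Grad\sigma}{F}\cdot\nF}_{\Leb{F}}^2\big)^{1/2}\big(\sum_F \frac{\eta}{h_F}\norm{\Jump{s}{F}}_{\Leb{F}}^2\big)^{1/2}$ plus the symmetric counterpart, and then use a discrete trace (inverse) inequality $\norm{\Avg{\Grad\sigma}{F}\cdot\nF}_{\Leb{F}}^2 \le C h_F^{-1}\norm{\GradM\sigma}_{\Leb{\omega_F}}^2$ valid on polynomials of degree $\le\degree$, with $C$ depending on $d,\degree,\Shape_\Mesh$; summing over faces with finite overlap absorbs this into $\eta^{-1}\eta_*\norm{\GradM\sigma}_{\Leb{\Domain}}^2 \le \eta^{-1}\eta_*\snorm{\sigma}_{1;\eta}^2$, and symmetrically for $s$. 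Collecting terms yields $b_\NIP(s,\sigma) \le (1 + \sqrt{\eta^{-1}\eta_*})\snorm{s}_{1;\eta}\snorm{\sigma}_{1;\eta}$, with $\eta_*$ the constant from the trace inequality (depending on $d,\degree,\Shape_\Mesh$).

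The main obstacle I anticipate is purely bookkeeping rather than conceptual: tracking the signs and the interior-versus-boundary face contributions when combining the output of Lemma~\ref{L:Poisson:moment-conservation} with $\bCons_\NIP$, because $\bCons_\NIP$ sums over all of $\FacesM$ while the moment-conservation conditions \eqref{DG-for-Poisson:conditions-for-E} only constrain interior faces (on boundary faces $\Jump{\sigma}{F}=\Avg{\sigma}{F}=\sigma_{|K}$, so the identity still closes, but one must verify this). A secondary point is making sure the discrete trace constant is genuinely of the advertised form $\eta_* = \eta_*(d,\degree,\Shape_\Mesh)$ and that the overlap counting introduces only a further $C_{d,\Shape_\Mesh}$ factor; both are standard and I would cite the trace inequality on the reference simplex together with \eqref{adjacent-elements}. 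Note that the coercivity bound is sharp (it is an equality), which is what ultimately drives the quasi-optimality constant to $1$ as $\eta\to\infty$.
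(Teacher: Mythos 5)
Your approach matches the paper's: rewrite $b_\NIP(s,\sigma) = (s,\sigma)_{1;\eta} + N(s,\sigma)$ with the skew-symmetric term $N$ via Lemma~\ref{L:Poisson:moment-conservation}, observe $N(s,s)=0$ for coercivity (the paper's ``by construction''), and bound $N$ using the discrete trace estimate \eqref{discrete-normal-derivative-bound}, which is exactly what the paper isolates before invoking ``standard steps.'' The decomposition and the boundary-face bookkeeping you flag are handled correctly.

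The one imprecision is in your final ``collecting terms.'' As sketched, you bound each of the two cross terms in $N(s,\sigma)$ by $\sqrt{\eta^{-1}\eta_*}\,\snorm{s}_{1;\eta}\snorm{\sigma}_{1;\eta}$ and add them, which yields the factor $1 + 2\sqrt{\eta^{-1}\eta_*}$ rather than the claimed $1+\sqrt{\eta^{-1}\eta_*}$. To get the sharper constant, do not collapse the gradient and jump contributions prematurely: with $a:=\norm{\GradM s}_{\Leb{\Omega}}$, $b:=\bigl(\sum_F \eta h_F^{-1}\norm{\Jump{s}{F}}_{\Leb{F}}^2\bigr)^{1/2}$ and likewise $c,d$ for $\sigma$, and $t:=\sqrt{\eta^{-1}\eta_*}$, your estimates give $b_\NIP(s,\sigma)\le ac+bd+tbc+tad = a(c+td)+b(d+tc)$; a Cauchy--Schwarz in $\R^2$ together with $(c+td)^2+(d+tc)^2\le(1+t)^2(c^2+d^2)$ then yields $(1+t)\snorm{s}_{1;\eta}\snorm{\sigma}_{1;\eta}$. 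This is the ``standard step'' the paper leaves implicit; your premature replacement $\norm{\GradM\sigma}\le\snorm{\sigma}_{1;\eta}$ loses the cross-structure needed for it.
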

Hence, if the penalty parameter $\eta$ is not too small, we may consider $\snorm{\cdot}_{1;\eta}$ with the same $\eta$ to be the discrete energy norm associated with $b_\NIP$. Remarkably, as $\eta\to\infty$, the coercivity and continuity constants tend to their respective counterparts of the limiting conforming Galerkin method in $\Lagr{\degree}$.

\begin{proof}
The coercivity bound holds by construction. For the continuity bound, we observe that, if $F\in\Faces{K}$ is a face of any $K \in \Mesh$, we have the inverse estimate $\norm{\cdot}_{\Leb{F}} \leq C_{d,\Shape_\Mesh,\degree} h_F^{-\frac{1}{2}} \norm{\cdot}_{\Leb{K}}$ in $\Poly{\degree-1}(K)$ and set $\eta_* :=(d+1)C_{d,\Shape_\Mesh,\degree}^2$. Then 
\begin{equation}
\label{discrete-normal-derivative-bound}
 \norm{ h^{\frac{1}{2}} \Avg{\Grad \sigma }{}}_{\Leb{\Sigma}}^2
 \leq
 \eta_* \norm{\GradM\sigma}_{\Leb{\Domain}}^2
\end{equation}
and the claimed continuity bound follows by standard steps.
\end{proof}

We thus arrive at $\app_\NIP = (\Polyell{\degree},b_\NIP,E_p)$, a \emph{new variant of the NIP method of order $p$} with the discrete problem
\begin{equation}
\label{qoSIP}
  U \in \Polyell{\degree}
\quad\text{such that}\quad
  \forall \sigma\in\Polyell{\degree} \;\,
  b_\NIP(s,\sigma) = \langle f, E_p\sigma \rangle.
\end{equation}
Since $b_\NIP = (\cdot,\cdot)_1$ and $\smt = \id$ on $\Lagr{\degree}$, this is a nonconforming Galerkin method. In contrast to the original NIP method, it applies to any load $f \in H^{-1}(\Domain)$ and has the following property.

\begin{theorem}[Quasi-optimality of $\app_\NIP$]
\label{T:qopt-NIP}
For any $\eta>0$, the method $\app_\NIP$ is $\snorm{\cdot}_{1;\eta}$-quasi-optimal for the Poisson problem \eqref{intro-Poisson} with constant $\leq \sqrt{1 + C_{d,\Shape_\Mesh,\degree} \eta^{-1}}$.
\end{theorem}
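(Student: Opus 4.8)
The plan is to verify the three conditions of Theorem~\ref{T:abs-qopt} for $\app_\NIP = (\Polyell{\degree}, b_\NIP, E_\degree)$ with the extended scalar product $\aext = (\cdot,\cdot)_{1;\eta}$, and then to bound the consistency measure $\delta$. First I would record that $\app_\NIP$ is fully algebraically consistent: on the conforming subspace $\Lagr{\degree} = \Polyell{\degree}\cap\SobH{\Domain}$ one has $\Jump{s}{F}\equiv 0$ for $s\in\Lagr{\degree}$, so $\bCons_\NIP(s,\sigma) = 0$ for $s\in\Lagr{\degree}$ and all $\sigma$, and moreover $E_\degree s = s$ for $s\in\Lagr{\degree}$ by Proposition~\ref{P:IP-smoother}. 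Combined with Lemma~\ref{L:Poisson:moment-conservation} (applicable since $E_\degree$ satisfies \eqref{DG-for-Poisson:conditions-for-E}), which gives $(\,s,E_\degree\sigma)_{1;\eta} = (s,\sigma)_1 - \int_\Sigma \Avg{\Grad s}{F}\cdot\nF\Jump{\sigma}{F}$, one sees that for $s\in\Lagr{\degree}$ the quantity $b_\NIP(s,\sigma) - (s, E_\degree\sigma)_{1;\eta} = \bCons_\NIP(s,\sigma) = 0$, i.e.\ $\bCons(s,\sigma)=0$ in the notation of Theorem~\ref{T:abs-qopt}. So item (ii) applies and $\app_\NIP$ is quasi-optimal; by item (iii) its constant is $\sqrt{1+\delta^2}$, and it remains to show $\delta^2 \leq C_{d,\Shape_\Mesh,\degree}\eta^{-1}$.

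Next I would identify $\bCons(v,\sigma)$ for general $v\in\SobH{\Domain}$. Since $\Ritz_S v \in \Lagr{\degree}$ is the $(\cdot,\cdot)_{1;\eta}$-orthogonal projection of $v$ onto $S$ (its image lies in the conforming part because $v$ is conforming and the jump penalty forces the minimizer to have zero jumps — or one argues directly), we get $\bCons(v,\sigma) = b_\NIP(\Ritz_S v,\sigma) - (v,E_\degree\sigma)_{1;\eta}$. Using the Lemma~\ref{L:Poisson:moment-conservation} identity for $s=\Ritz_S v$ and the orthogonality $(v-\Ritz_S v, E_\degree\sigma)_{1;\eta}=(v-\Ritz_S v,\Ritz_S(E_\degree\sigma))_{1;\eta}=0$ — using that $E_\degree\sigma$ need not be in $S$; instead I would write $(v,E_\degree\sigma)_{1;\eta}=(\Ritz_S v,E_\degree\sigma)_{1;\eta}+(v-\Ritz_S v, E_\degree\sigma)_{1;\eta}$ and handle the second term via the projection property more carefully — the point is that $\bCons(v,\sigma)$ collapses to the face term $\int_\Sigma \Avg{\Grad(\Ritz_S v - v)}{F}\cdot\nF\Jump{\sigma}{F}$ plus the $\bCons_\NIP$ jump contributions, which after the cancellations in $b_\NIP$ reduce to something controlled by $\inf_{s\in S}\norm{v-s}$ and the jumps of $\sigma$. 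Concretely I expect $\snorm{\bCons(v,\sigma)} \lesssim \norm{h^{1/2}\Avg{\Grad(v-\Ritz_S v)}{}}_{\Leb{\Sigma}}\,\norm{h^{-1/2}\Jump{\sigma}{}}_{\Leb{\Sigma}}$, and then \eqref{discrete-normal-derivative-bound}-type inverse estimates plus $\norm{h^{-1/2}\Jump{\sigma}{}}_{\Leb{\Sigma}}^2 \leq \eta^{-1}\snorm{\sigma}_{1;\eta}^2 \lesssim \eta^{-1}\sup_{\hat s}b_\NIP(\hat s,\sigma)^2/\dots$ via the coercivity of $b_\NIP$ (Lemma~\ref{L:NIP-energy-norm}). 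Thus the $\eta^{-1}$ gain comes precisely from the jump half of $\snorm{\cdot}_{1;\eta}$ carrying the weight $\eta$, so that pulling it out of $\sup_{\hat s}b_\NIP(\hat s,\sigma)$ costs a factor $\eta^{-1/2}$.

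More carefully, I would use part (iii) of Theorem~\ref{T:abs-qopt} in the form: $\delta$ is the smallest constant with $\snorm{\bCons(v,\sigma)}\leq \delta\,\bigl(\sup_{\hat s\in S,\norm{\hat s}=1} b_\NIP(\hat s,\sigma)\bigr)\inf_{s\in S}\norm{v-s}$. By Lemma~\ref{L:NIP-energy-norm}, coercivity gives $\sup_{\hat s,\norm{\hat s}=1}b_\NIP(\hat s,\sigma) \geq b_\NIP(\sigma/\snorm{\sigma}_{1;\eta},\sigma) \geq \snorm{\sigma}_{1;\eta}$, so it suffices to prove $\snorm{\bCons(v,\sigma)} \leq C_{d,\Shape_\Mesh,\degree}\,\eta^{-1/2}\,\snorm{\sigma}_{1;\eta}\,\inf_{s\in S}\norm{v-s}$. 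Starting from the reduced expression for $\bCons(v,\sigma)$ as a sum of face integrals in $\Avg{\Grad(\Ritz_S v-v)}{F}$ against $\Jump{\sigma}{F}$, Cauchy--Schwarz on $\Sigma$ with the weight split $h^{1/2}\cdot h^{-1/2}$ gives the product $\norm{h^{1/2}\Avg{\Grad(\Ritz_S v - v)}{}}_{\Leb{\Sigma}} \cdot \norm{h^{-1/2}\Jump{\sigma}{}}_{\Leb{\Sigma}}$; the second factor is $\leq \eta^{-1/2}\snorm{\sigma}_{1;\eta}$ by definition of $\snorm{\cdot}_{1;\eta}$, and the first factor needs a trace/inverse estimate bounding $\norm{h^{1/2}\Avg{\Grad w}{}}_{\Leb{\Sigma}}$ for $w = \Ritz_S v - v$ — here I would split $\Grad w = \GradM\Ritz_S v - \Grad v$, use \eqref{discrete-normal-derivative-bound} on the discrete part and a standard scaled trace inequality on the $\SobH{\Domain}$-part, to get it $\lesssim \norm{\GradM w}_{\Leb{\Domain}} = \snorm{\Ritz_S v - v}_{1;\eta}$ (the jump term of $\snorm{\cdot}_{1;\eta}$ vanishes on $v$ and is absorbed on $\Ritz_S v$), which is exactly $\inf_{s\in S}\norm{v-s}$. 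The main obstacle I anticipate is the careful bookkeeping of the cancellations: one must check that, after expanding $b_\NIP = (\cdot,E_\degree\cdot)_{1;\eta}+\bCons_\NIP$ and substituting Lemma~\ref{L:Poisson:moment-conservation}, every term in $\bCons(v,\sigma)$ that is not an $\Avg{\Grad w}{F}\Jump{\sigma}{F}$ face integral genuinely cancels — in particular the $\Avg{\Grad\Ritz_S v}{F}\Jump{\sigma}{F}$ term coming from $\bCons_\NIP$ must annihilate the corresponding term from the Lemma~\ref{L:Poisson:moment-conservation} identity, leaving only the data $\Grad v$, and the $\frac{\eta}{h}\Jump{\cdot}{}\Jump{\cdot}{}$ term in $\bCons_\NIP$ must cancel against the jump part of $(\Ritz_S v, E_\degree\sigma)_{1;\eta}$, which it does because $\Ritz_S v$ has no jumps. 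Once this algebra is pinned down, the estimate is routine.
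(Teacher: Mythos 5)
Your overall plan is the right one and matches the paper's: invoke Theorem~\ref{T:abs-qopt}, verify full algebraic consistency on $\Lagr{\degree}$ via Lemma~\ref{L:Poisson:moment-conservation} and the projection property of $E_\degree$ from Proposition~\ref{P:IP-smoother}, and then bound $\delta$, using the coercivity of $b_\NIP$ to absorb the inf-sup constant and the $\eta$-weighted jump part of $\snorm{\cdot}_{1;\eta}$ to produce the $\eta^{-1/2}$ gain. Your paragraph~1 is essentially correct.

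However, the central algebraic step in your paragraphs~2--3 has a genuine gap, and it is not just bookkeeping. First, the claim that $\Ritz_S v \in \Lagr{\degree}$ (equivalently, that the $(\cdot,\cdot)_{1;\eta}$-orthogonal projection of a conforming $v$ onto $\Polyell{\degree}$ has vanishing jumps) is false for finite $\eta$: the projection minimizes $\norm{\GradM(v-s)}_{\Leb{\Domain}}^2 + \eta\norm{h^{-1/2}\Jump{s}{}}^2_{\Leb{\Sigma}}$ over \emph{all} of $\Polyell{\degree}$, and the minimizer generically has nonzero jumps (it only \emph{tends} to $\Lagr{\degree}$ as $\eta\to\infty$). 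So the cancellation you attribute to ``$\Ritz_S v$ has no jumps'' does not happen. What actually produces the cancellation in the paper is the $(\cdot,\cdot)_{1;\eta}$-orthogonality itself: since $\sigma\in S$, one has $(\Ritz_{\eta,\degree}v,\sigma)_{1;\eta} = (v,\sigma)_{1;\eta} = (v,\sigma)_1$, which folds the $\frac{\eta}{h}\Jump{\Ritz_{\eta,\degree}v}{}\Jump{\sigma}{}$ term into the volume part and eliminates the penalty contribution without any assumption on the jumps of $\Ritz_{\eta,\degree}v$.

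Second, and more seriously, the form you expect $\bCons(v,\sigma)$ to collapse to — face integrals of $\Avg{\Grad(\Ritz_S v - v)}{F}$ against $\Jump{\sigma}{F}$ — is \emph{not defined} for a generic $v\in\SobH{\Domain}$: the trace of $\Grad v$ on $\Sigma$ need not exist in $L^2$, and there is no ``standard scaled trace inequality'' for $\SobH{\Domain}$-functions to bound $\norm{h^{1/2}\Avg{\Grad v}{}}_{\Leb{\Sigma}}$ without extra regularity. Avoiding precisely this defect is the whole point of the quasi-optimality construction. The correct identity (the paper's) is
\begin{equation*}
 \bCons(v,\sigma)
 =
 (\Ritz_{\eta,\degree} v - v,\, E_\degree\sigma - \sigma)_1
 +
 \int_\Sigma \Jump{\Ritz_{\eta,\degree} v - v}{F}\,\Avg{\Grad\sigma}{F}\cdot\nF,
\end{equation*}
where $\Grad v$ occurs \emph{only} under a volume integral, the face term carries $\Avg{\Grad\sigma}{}$ (a piecewise polynomial, controlled by the inverse estimate \eqref{discrete-normal-derivative-bound}) paired with the jump of $\Ritz_{\eta,\degree}v - v$ (well-defined since $\Ritz_{\eta,\degree}v$ is piecewise polynomial). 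Proposition~\ref{P:IP-smoother} then bounds the first term by $\snorm{\Ritz_{\eta,\degree}v-v}_{1;\eta}\cdot\eta^{-1/2}\snorm{\sigma}_{1;\eta}$ and \eqref{discrete-normal-derivative-bound} bounds the second similarly. You should redo the algebra: expand $b_\NIP(\Ritz_{\eta,\degree}v,\sigma)$ with Lemma~\ref{L:Poisson:moment-conservation}, replace $(\Ritz_{\eta,\degree}v,\sigma)_{1;\eta}$ by $(v,\sigma)_1$ via orthogonality, and then recombine with $-(v,E_\degree\sigma)_1$; the terms reorganize into the display above, with $\Avg{\Grad v}{}$ on $\Sigma$ never appearing.
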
 

\begin{proof}
After using the combination of Lemma \ref{L:Poisson:moment-conservation} and Proposition \ref{P:IP-smoother} in Theorem \ref{T:abs-qopt}, it remains to bound $\delta$. Let $v\in \SobH{\Domain}$, $\sigma \in \Polyell{\degree}$ and denote by $\Ritz_{\eta,\degree}$ the $(\cdot,\cdot)_{1;\eta}$-orthogonal projection onto $\Polyell{\degree}$. Then $\Jump{v}{}=0=\Jump{E_p\sigma}{}$ and the definition of $\Ritz_{\eta,\degree}$ imply
\begin{equation*}
 b_\NIP(\Ritz_{\eta,\degree} v,\sigma) - (v,E_p\sigma)_{1}
 =
 (\Ritz_{\eta,\degree} v - v,E_p\sigma-\sigma)_1
 +
 \int_{\Sigma}
  \Jump{\Ritz_{\eta,\degree}v - v}{F} \Avg{\Grad\sigma}{F}\cdot \nF,
\end{equation*}
whence Proposition~\ref{P:IP-smoother} and \eqref{discrete-normal-derivative-bound} yield
\begin{equation}
\label{DG-Poisson:bound-for-delta}
 |b_\NIP(\Ritz_{\eta,\degree} v,\sigma) - (v,E_p\sigma)_1|
 \leq
 C_{d,\Shape_\Mesh,\degree} \eta^{-\frac{1}{2}}
  \snorm{\sigma}_{1;\eta}
  \snorm{\Ritz_{\eta,\degree}v - v}_{1;\eta}.
\end{equation}
We thus conclude $\delta \Cleq \eta^{-\frac{1}{2}}$ with the help of the coercivity bound in Lemma~\ref{L:NIP-energy-norm}.
\end{proof}

\subsubsection*{A quasi-optimal SIP method}
%
The NIP bilinear form $b_\NIP$ arises in particular by enforcing coercivity. As an alternative, one can achieve symmetry by changing the sign of the first term in $\bCons_\NIP$. This leads to the SIP bilinear form $b_\SIP$; cf.\ \eqref{intro-sip-problem}. While $b_\SIP$ verifies the same continuity bound as $b_\NIP$, the coercivity bound can be replaced as follows. Inequality \eqref{discrete-normal-derivative-bound} implies
\begin{equation*}
  \left|
   \int_{\Sigma} \Jump{s}{} \Avg{\Grad\sigma}{}\cdot n
  \right|
  \leq
  \frac{1}{2} \sqrt{\eta_*\eta^{-1}} \left(
   \eta \norm{ h^{-\frac{1}{2}} \Jump{s}{} }_{\Leb{\Sigma}}^2
   +
   \norm{ \GradM \sigma }_{\Leb{\Omega}}^2
  \right),
\end{equation*}
from which we get
\begin{equation}
\label{SIP:coercivity}
 \forall s\in\Polyell{\degree}
\quad
 b_{\SIP}(s,s)
 \geq
 \alpha(\eta_*\eta^{-1}) \snorm{s}_{1;\eta}^2
\quad\text{with}\quad
 \alpha(t) = 1 - \sqrt{t}.
\end{equation}
Hence, if $\eta >\eta_*$, then the discrete problem
\begin{equation}
\label{SIP-Poisson:discrete-problem}
 U \in \Polyell{\degree}
 \quad\text{such that}\quad
 \forall \sigma \in \Polyell{\degree} \;\;
 b_\SIP(U,\sigma) = \langle f, \smt_\degree \sigma \rangle
\end{equation} 
is well-posed and gives rise to a \emph{new variant of the SIP method}, which is a nonconforming Galerkin method and denoted by $\app_\SIP$. The following theorem covers the results illustrated in the introduction \S\ref{S:introduction} and is proven as Theorem \ref{T:qopt-NIP}.

\begin{theorem}[Quasi-optimality of $\app_\SIP$]
\label{T:qopt-SIP}
For any $\eta > \eta_*$, the method $\app_\SIP$ is $\snorm{\cdot}_{1,\eta}$-quasi-optimal for \eqref{intro-Poisson} with constant $\leq \sqrt{1 + C_{d,\Shape_\Mesh,\degree} \big(\alpha(\eta_*/\eta)\eta\big)^{-1}}$.
\end{theorem}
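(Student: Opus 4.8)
The plan is to follow the blueprint of the proof of Theorem~\ref{T:qopt-NIP} verbatim, the only change being that the coercivity constant $1$ is replaced by $\alpha(\eta_*/\eta)$. First I would invoke Theorem~\ref{T:abs-qopt}: since $\app_\SIP=(\Polyell{\degree},b_\SIP,\smt_\degree)$ is a nonconforming Galerkin method (because $b_\SIP=(\cdot,\cdot)_1$ and $\smt_\degree=\id$ on $\Lagr{\degree}$, exactly as noted for $\app_\NIP$), it is fully algebraically consistent, hence quasi-optimal by part~(ii), and by part~(iii) its quasi-optimality constant is $\sqrt{1+\delta^2}$. So the whole proof reduces to bounding the consistency measure $\delta$.

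To bound $\delta$, I would fix $v\in\SobH{\Domain}$ and $\sigma\in\Polyell{\degree}$, let $\Ritz_{\eta,\degree}$ be the $(\cdot,\cdot)_{1;\eta}$-orthogonal projection onto $\Polyell{\degree}$, and compute $\bCons(\Ritz_{\eta,\degree}v,\sigma)=b_\SIP(\Ritz_{\eta,\degree}v,\sigma)-(v,\smt_\degree\sigma)_{1;\eta}$. Using Lemma~\ref{L:Poisson:moment-conservation} to rewrite $(v,\smt_\degree\sigma)_{1;\eta}$ via moment conservation and the facts $\Jump{v}{}=0=\Jump{\smt_\degree\sigma}{}$, the terms collapse just as in Theorem~\ref{T:qopt-NIP} to
\begin{equation*}
 \bCons(\Ritz_{\eta,\degree}v,\sigma)
 =
 (\Ritz_{\eta,\degree}v-v,\smt_\degree\sigma-\sigma)_1
 -
 \int_{\Sigma}\Jump{\Ritz_{\eta,\degree}v-v}{F}\Avg{\Grad\sigma}{F}\cdot\nF,
\end{equation*}
the sign of the skeleton term being flipped relative to $b_\NIP$, which is immaterial for an absolute-value bound. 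Applying the stability estimate of Proposition~\ref{P:IP-smoother} to the first term and \eqref{discrete-normal-derivative-bound} to the second, one gets $|\bCons(\Ritz_{\eta,\degree}v,\sigma)|\le C_{d,\Shape_\Mesh,\degree}\,\eta^{-1/2}\snorm{\sigma}_{1;\eta}\snorm{\Ritz_{\eta,\degree}v-v}_{1;\eta}$, exactly as in \eqref{DG-Poisson:bound-for-delta}.

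Finally I would convert this into a bound on $\delta$ using the coercivity estimate \eqref{SIP:coercivity}, $b_\SIP(s,s)\ge\alpha(\eta_*/\eta)\snorm{s}_{1;\eta}^2$, valid for $\eta>\eta_*$: this gives $\sup_{\hat s,\,\norm{\hat s}=1}b_\SIP(\hat s,\sigma)\ge\alpha(\eta_*/\eta)\snorm{\sigma}_{1;\eta}$, so dividing through yields $\delta\le C_{d,\Shape_\Mesh,\degree}\,\eta^{-1/2}/\alpha(\eta_*/\eta)=C_{d,\Shape_\Mesh,\degree}\big(\alpha(\eta_*/\eta)^2\eta\big)^{-1/2}$, and hence $\Cqopt=\sqrt{1+\delta^2}\le\sqrt{1+C_{d,\Shape_\Mesh,\degree}(\alpha(\eta_*/\eta)\eta)^{-1}}$ after absorbing the extra factor $\alpha(\eta_*/\eta)^{-1}\le\alpha(\eta_*/\eta)^{-2}$ (for $\eta>\eta_*$ one has $\alpha(\eta_*/\eta)\in(0,1)$) into the generic constant. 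The only genuine point of care — and the sole difference from Theorem~\ref{T:qopt-NIP} — is tracking how the degraded coercivity constant $\alpha(\eta_*/\eta)$ rather than $1$ enters the denominator when passing from the $\bCons$-bound to $\delta$; everything else is a transcription of the NIP argument, so I expect no real obstacle.
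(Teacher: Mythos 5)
Your argument follows exactly the paper's intended route — the paper itself just says the result ``is proven as Theorem~\ref{T:qopt-NIP}'' — and you correctly identify the two ingredients that change: the sign flip in the skeleton term of $\bCons$ (immaterial for an absolute-value bound) and the degraded coercivity constant $\alpha(\eta_*/\eta)$ in place of $1$. The derivation of the identity for $\bCons(\Ritz_{\eta,\degree}v,\sigma)$ and the bound $|\bCons(\Ritz_{\eta,\degree}v,\sigma)| \le C_{d,\Shape_\Mesh,\degree}\,\eta^{-1/2}\snorm{\sigma}_{1;\eta}\snorm{\Ritz_{\eta,\degree}v-v}_{1;\eta}$ are all fine.

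The final step, however, contains a genuine error. From $\delta \le C\eta^{-1/2}/\alpha(\eta_*/\eta)$ you obtain $\delta^2 \le C^2\big(\alpha(\eta_*/\eta)^2\eta\big)^{-1}$, and you then claim you can pass to $\sqrt{1+C'(\alpha(\eta_*/\eta)\eta)^{-1}}$ ``after absorbing the extra factor $\alpha(\eta_*/\eta)^{-1}\le\alpha(\eta_*/\eta)^{-2}$ into the generic constant''. The inequality you quote goes the wrong way: to replace $(\alpha^2\eta)^{-1}$ by $(\alpha\eta)^{-1}$ up to a constant you would need $\alpha^{-2}\le C'\alpha^{-1}$, i.e.\ $\alpha(\eta_*/\eta)^{-1}\le C'$ uniformly — but $\alpha(\eta_*/\eta)=1-\sqrt{\eta_*/\eta}\to 0$ as $\eta\to\eta_*^+$, so $\alpha^{-1}$ is unbounded and cannot be absorbed into a constant depending only on $d$, $\Shape_\Mesh$, $\degree$. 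What your computation actually establishes is $\Cqopt\le\sqrt{1+C_{d,\Shape_\Mesh,\degree}\big(\alpha(\eta_*/\eta)^2\eta\big)^{-1}}$, with $\alpha^2$ rather than $\alpha$ in the denominator. (Whether the paper's stated exponent of $\alpha$ is itself an oversight is a separate matter — since for a symmetric coercive form the inf-sup constant equals the coercivity constant $\alpha(\eta_*/\eta)$, the mechanical transcription of the NIP argument does give $\alpha^2$ — but in any case your ``absorption'' step as written is not valid and should be either removed, with the statement weakened to $\alpha^2$, or replaced by a genuine improvement of the $\delta$-estimate.)
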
 

For $\eta\to\infty$, we again end up in C\'ea's lemma for the limiting conforming Galerkin method in $\Lagr{\degree}$.
\subsubsection*{High-order smoothing with first-order averaging}
Assume that $\degree\geq2$. The simplified averaging operator $\AvgOper{1}$ is defined also on $\Polyell{\degree}$ and so we may consider
\begin{equation}
\label{DG-Poisson:smoother-with-first-order-averaging}
 \widetilde{E}_\degree \sigma
 :=
 \AvgOper{1}\sigma + \BubbOper[\degree](\sigma -  \AvgOper{1}\sigma),
\quad
 \sigma \in \Polyell{\degree},
\end{equation}
which is cheaper to evaluate than $E_\degree$. In order to assess this idea, let us first check in which sense $A_1$ can provide stabilization.

\begin{lemma}[First-order averaging for higher order piecewise polynomials]
\label{DG-Poission-L:first-order-averaging}
Let $\degree\geq2$, $K \in \Mesh$, and $F \in \Faces{K}$. For all $z \in \LagNod{\degree}(K) \cap F$ and all $\sigma \in \Polyell{\degree}$, we have
\begin{equation*}
 \snorm{\sigma_{|K} (z) - \AvgOper{1}\sigma(z)}
 \leq
 C_{d,\degree} \left(
 \sum \limits_{F' \cap F \neq \emptyset }
  \dfrac{1}{\snorm{F'}}  \snorm{\int_{F'} \Jump{\sigma}{F'} }
 +
 \sum \limits_{K' \cap F \neq \emptyset}
  \dfrac{h_{K'}}{\snorm{K'}^{\frac{1}{2}}} \norm{\Grad \sigma}_{\Leb{K'}}
 \right),
\end{equation*}
where $F'$ and $K'$ vary, respectively, in $\FacesM$ and $\Mesh$. 
\end{lemma}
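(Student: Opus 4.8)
The plan is to mimic the estimate carried out in Lemma~\ref{L:averaging-for-IP}, but to keep track of the fact that $\AvgOper{1}$ only sees the first-order (vertex) nodes, so that the jump of a higher-degree function along a face must be controlled by \emph{both} the mean jumps on faces \emph{and} the piecewise gradient. First I would fix $z\in\LagNod{\degree}(K)\cap F$ and use the face-connectedness of the star $\omega_z$ (as in \cite[(3.6)]{Veeser.Zanotti:17p2}) to write $\sigma_{|K}(z)-\AvgOper{1}\sigma(z)$ as a telescoping sum of differences $\sigma_{|K_i}(z)-\sigma_{|K_{i+1}}(z)=\Jump{\sigma}{F_i}_{|F_i}(z)$ along a path of elements joining $K$ to the chosen element $K_{z^\ast}$, where $z^\ast\in\LagNod{1}(K)$ is a vertex used to define $\AvgOper{1}$; actually one also has to pass from the value of $\sigma$ at $z$ to its value at $z^\ast$ within a single element, which produces the gradient contribution. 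So the estimate naturally splits into
\begin{equation*}
 \snorm{\sigma_{|K}(z)-\AvgOper{1}\sigma(z)}
 \leq
 \sum_{F'} \snorm{\Jump{\sigma}{F'}_{|F'}(z')}
 +
 \sum_{K'} \snorm{\sigma_{|K'}(z)-\sigma_{|K'}(z')},
\end{equation*}
where $z'$ ranges over the finitely many relevant nodes and $F',K'$ over the faces/elements meeting $F$.

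The second (gradient) part is the only genuinely new ingredient: within one element $K'$, the difference of point values $\sigma_{|K'}(z)-\sigma_{|K'}(z')$ is bounded by $\|\nabla\sigma\|_{L^\infty(K')}$ times the (bounded) diameter, hence, by the inverse estimate $\norm{\cdot}_{\Lebp{\infty}{K'}}\leq C_{d,\degree}\snorm{K'}^{-\frac12}\norm{\cdot}_{\Leb{K'}}$ on $\Poly{\degree-1}(K')$ applied componentwise to $\nabla\sigma$, by $C_{d,\degree}\,h_{K'}\snorm{K'}^{-\frac12}\norm{\Grad\sigma}_{\Leb{K'}}$; this is exactly the second sum in the claim. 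For the first (jump) part I would \emph{not} use the $L^\infty$ inverse estimate on the face directly, because the claim asks for the \emph{mean} $\snorm{\int_{F'}\Jump{\sigma}{F'}}/\snorm{F'}$ rather than $\snorm{F'}^{-\frac12}\norm{\Jump{\sigma}{F'}}_{\Leb{F'}}$. The point is that $\Jump{\sigma}{F'}_{|F'}\in\Poly{\degree}(F')$, whereas $\AvgOper{1}$ only resolves the vertices, so the \emph{oscillation} of the jump on $F'$ is itself controlled by the gradients of $\sigma$ on the two neighboring elements (a bubble/jump-oscillation argument exactly as in the bound of $Q_F$). Concretely, $\Jump{\sigma}{F'}_{|F'}(z')$ is estimated by its average over $F'$ plus its oscillation, and the oscillation is absorbed into the gradient sum via $\norm{\Jump{\sigma}{F'}-\fint_{F'}\Jump{\sigma}{F'}}_{\Lebp{\infty}{F'}}\Cleq h_{F'}\norm{\GradM\sigma}_{\Lebp{\infty}{F'}}$ plus an inverse estimate passing from $F'$ to the adjoining $K'$.

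Finally I would assemble the two parts and use the shape-regularity relations \eqref{adjacent-elements} together with the uniform bound $\#\{K'\in\Mesh\mid K'\cap K\neq\emptyset\}\leq C_{d,\Shape_\Mesh}$ and the analogous bound on faces to rewrite everything as a sum over $F'\cap F\neq\emptyset$ and $K'\cap F\neq\emptyset$, absorbing all mesh ratios into constants $C_{d,\degree}$ (noting that, after the reduction to a single element, only the local comparabilities survive, so $\Shape_\Mesh$ need not appear — this matches the statement). The main obstacle is the jump-oscillation step: one has to see clearly that the difference between a point value and the mean of a degree-$\degree$ polynomial on a face of size $h_{F'}$ is $O(h_{F'})$ times the tangential gradient, and that this tangential gradient of $\Jump{\sigma}{F'}$ is in turn controlled by $\norm{\GradM\sigma}$ on the two elements sharing $F'$ through a trace/inverse estimate; everything else is the by-now standard telescoping along face-connected stars already used for Lemma~\ref{L:averaging-for-IP}.
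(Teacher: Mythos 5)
Your plan is correct and matches the paper's proof in all essentials: the paper likewise distinguishes whether $z$ is a vertex of $F$ (using the barycentric partition of unity $\sum_{y\in\LagNod{1}(F)}\lambda_y^K=1$ on $F$), bounds the intra-element passage from $z$ to a vertex $y$ of $F$ by a gradient estimate inside $K$, and reduces pointwise jumps at the vertices to mean jumps plus gradient contributions. The only differences are cosmetic: the paper delegates the pointwise-to-mean-jump step to \cite[Lemma~3.1]{Veeser.Zanotti:17p2} instead of re-deriving it via the oscillation argument you sketch, and it handles the intra-element gradient step through an $L^2$ trace/Poincar\'e bound and \cite[Lemma~3]{Veeser:16} rather than your direct $L^\infty$ inverse estimate for $\nabla\sigma\in\Poly{\degree-1}(K)^d$, both routes giving the same $C_{d,\degree}\,h_K\snorm{K}^{-1/2}\norm{\nabla\sigma}_{\Leb{K}}$.
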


\begin{proof}
Given any $z \in \LagNod{\degree}(K) \cap F$, Lemma~3.1 in \cite{Veeser.Zanotti:17p2} ensures
\begin{equation}
\label{Ap:bound-with-mean-jumps}
 \snorm{\sigma_{|K} (z) - \AvgOper{\degree}\sigma(z)}
 \leq
 \sum \limits_{F' \ni z}
 \dfrac{1}{\snorm{F'}}  \snorm{\int_{F'} \Jump{\sigma}{F'} }
 +
 C_{d,\degree} \sum \limits_{K' \ni z}
 \dfrac{h_{K'}}{\snorm{K'}^{\frac{1}{2}}} \norm{\Grad \sigma}_{\Leb{K'}}
\end{equation}
We distinguish two cases, depending whether or not $z$ is a vertex.

\emph{Case 1:} $z\in\LagNod{1}(K)$. Then we have $\AvgOper{1} \sigma (z) = \AvgOper{\degree} \sigma (z)$ and the claimed estimate follows from \eqref{Ap:bound-with-mean-jumps}.

\emph{Case 2:} $z \in \LagNod{\degree}(K) \setminus \LagNod{1}(K)$. Since $\AvgOper{1} \sigma_{|F} \in \Poly{1}(F)$ and $\sum_{y\in\LagNod{1}(F)} \lambda_{y}^K = 1$, we may write
\begin{equation}
\label{Averaging-proof-1}
 \textstyle
 |\sigma_{|K} (z) - \AvgOper{1}\sigma(z)|
 \leq
 \sum_{y \in \LagNod{1}(F)} \left|
  \sigma_{|K}(z) - \AvgOper{1} \sigma (y)
  \right| \lambda_{y}^K(z)
\end{equation}
and, for any $y \in \LagNod{1}(F)$, 
\begin{equation*}
 \left| \sigma_{|K}(z) - \AvgOper{1} \sigma (y) \right|
 \leq
 \left| \sigma_{|K}(z) - \sigma_{|K}(y) \right|
 +
 \left| \sigma_{|K}(y) - \AvgOper{1} \sigma (y) \right|.  
\end{equation*}
As the second term of the right-hand side is already bounded in Case~1, it remains to bound the first term. Writing $c$ for the mean value of $\sigma$ in $K$, we deduce
\begin{align*}
 \snorm{ \sigma_{|K}(z) - \sigma_{|K}(y)}
 &\leq
 \snorm{ \sigma_{|K}(z) - c}
 +
 \snorm{ \sigma_{|K}(y) - c}
\\
 &\Cleq
 \snorm{F}^{-\frac{1}{2}} \norm{\sigma_{|K} -c}_{\Leb{F}}
 \Cleq
 h_K \snorm{K}^{-\frac{1}{2}} \norm{\Grad \sigma}_{\Leb{K}}
\end{align*}
with the help of an inverse estimate in $\Poly{\degree}(F)$ and \cite[Lemma~3]{Veeser:16}.
\end{proof}

Notice that, under the assumptions of Lemma~\ref{DG-Poission-L:first-order-averaging}, a bound solely in jump terms is not possible. Using Lemma~\ref{DG-Poission-L:first-order-averaging} in the proof of Proposition~\ref{P:IP-smoother}, we obtain the following properties of $\widetilde{E}_\degree$.

\begin{proposition}[Moment conservation with first-order averaging]
\label{P:IP-smoother-with-first-order-averaging}
The linear operator $\widetilde{\smt}_{\degree}$ from \eqref{DG-Poisson:smoother-with-first-order-averaging} is invariant on $\Lagr{1}$, satisfies \eqref{DG-for-Poisson:conditions-for-E} and, for all $\sigma \in \Polyell{\degree}$,
\begin{equation*}
 \norm{\GradM (\sigma - \widetilde{\smt}_\degree\sigma)}_{\Leb{\Domain}}
 \leq
 C_{d,\Shape_\Mesh,\degree} \left(
  \sum_{F \in \FacesM} h_F^{-d} \left|
	\int_F  \Jump{\sigma}{F} 
 \right|^2
 +
 \norm{ \GradM \sigma }_{\Leb{\Domain}}^2
 \right)^{\frac{1}{2}}.
\end{equation*}
\end{proposition}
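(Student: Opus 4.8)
The plan is to mirror the proof of Proposition~\ref{P:IP-smoother}, replacing the pointwise bound of Lemma~\ref{L:averaging-for-IP} by the pointwise bound of Lemma~\ref{DG-Poission-L:first-order-averaging} wherever the averaging error appears, and then tracking how the jump terms degrade into mean-jump plus piecewise-gradient terms. First I would observe that $\widetilde{E}_\degree$ is well-defined, maps into $\SobH{\Domain}$, and is a projection onto $\Lagr{1}$: since $\AvgOper{1}$ fixes $\Lagr{1}$ and $\BubbOper[\degree]$ annihilates any argument with vanishing face and element moments, $\widetilde E_\degree$ agrees with the identity on $\Lagr{1}$. For the moment conservation \eqref{DG-for-Poisson:conditions-for-E}, the same algebraic rearrangement as in \eqref{Ep-and-face-moments} works verbatim: write $\int_F(\widetilde E_\degree\sigma)q = \int_F(\BubbOper[\degree]\sigma)q + \int_F(\AvgOper{1}\sigma - \BubbOper[\degree]\AvgOper{1}\sigma)q$, and the second term vanishes because $\AvgOper{1}\sigma$ is continuous so $\BubbOper[\FacesM,\degree]$ leaves its averages untouched and $\BubbOper[\Mesh,\degree]$ leaves its element moments untouched; the element moments are handled identically. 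Crucially, none of this uses that $\AvgOper{1}$ interpolates at higher-order nodes — only that it returns a continuous piecewise polynomial, so conservation is inherited purely from the bubble operator.

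Next I would establish the stability bound. As in the proof of Proposition~\ref{P:IP-smoother}, split $\norm{\GradM(\sigma-\widetilde E_\degree\sigma)}_{\Leb{\Domain}} \leq \norm{\GradM(\sigma - \AvgOper{1}\sigma)}_{\Leb{\Domain}} + \norm{\nabla\BubbOper[\degree](\sigma - \AvgOper{1}\sigma)}_{\Leb{\Domain}}$, fix $K\in\Mesh$, and expand $\sigma_{|K} - \AvgOper{1}\sigma$ in the nodal basis $\{\Psi_{K,z}^\degree\}_{z\in\LagNod{\degree}(K)}$. Using $\Phi_z^\degree{}_{|K}=\Psi_{K,z}^\degree$ together with \eqref{Lagrange-basis:scaling} gives $\norm{\nabla(\sigma-\AvgOper{1}\sigma)}_{\Leb{K}} \leq C_{d,\degree}\snorm{K}^{\frac12}\rho_K^{-1}\sum_{z\in\LagNod{\degree}(K)}\snorm{\sigma_{|K}(z)-\AvgOper{1}\sigma(z)}$. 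Now insert Lemma~\ref{DG-Poission-L:first-order-averaging} for each node $z$ on a face of $K$ (interior nodes are handled as before, but note that for $\AvgOper{1}$ every node of $K$ lies in $\partial K$ when $\degree\geq1$, so there are no purely interior contributions), producing two families of terms: mean-jump terms $\snorm{F'}^{-1}\snorm{\int_{F'}\Jump{\sigma}{F'}}$ and local gradient terms $h_{K'}\snorm{K'}^{-\frac12}\norm{\GradM\sigma}_{\Leb{K'}}$ over $F',K'$ in the patch $\omega_K$. For the bubble term, I would feed the same estimates of $\sup_{r}\int_K(\sigma-\AvgOper{1}\sigma)r/\norm{r}_{\Leb{K}}$ and the face analogue (bounded by $\snorm{K}^{\frac12}$ resp.\ $\snorm{F}^{\frac12}$ times sums of nodal errors, exactly as in the display after \eqref{Ep-bound-2'}) into the stability estimate of Lemma~\ref{L:DG-bubble-smoother}, recovering the same mixture of mean-jump and gradient terms.

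The remaining work is bookkeeping on the geometric weights. For the mean-jump terms I would combine $\snorm{K}^{\frac12}\rho_K^{-1}\snorm{F'}^{-1}$ and, via \eqref{adjacent-elements}, estimate $\snorm{K}^{\frac12}\rho_K^{-1}\snorm{F'}^{-1} \Cleq h_K\rho_K^{-1}\big(h_K^{d-2}\rho_{K'}^{1-d}\big)^{\frac12}h_{F'}^{-\frac{d-2}{2}}\cdots$ — more directly, since $\snorm{F'}\simeq h_{F'}^{d-1}$ and $\snorm{K}^{\frac12}\rho_K^{-1}\Cleq \snorm{K}^{\frac12}h_K^{-1}\Cleq h_K^{\frac{d}{2}-1}\Cleq h_{F'}^{\frac{d}{2}-1}$ on the patch, one gets a weight $\Cleq h_{F'}^{\frac d2 - 1 - (d-1)} = h_{F'}^{-\frac d2}$, which squares to $h_F^{-d}$ as required. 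For the gradient terms the weight is $\snorm{K}^{\frac12}\rho_K^{-1}h_{K'}\snorm{K'}^{-\frac12}\Cleq C_{\Shape_\Mesh}$ by \eqref{adjacent-elements} (all quantities on a patch are comparable up to shape constants), giving simply $\norm{\GradM\sigma}_{\Leb{K'}}$. Finally, square, sum over $K\in\Mesh$, and use finite overlap $\#\{K'\subseteq\omega_K\}\leq C_{d,\Shape_\Mesh}$ and the bound on the number of elements meeting a given face to arrive at $\norm{\GradM(\sigma-\widetilde E_\degree\sigma)}_{\Leb{\Domain}}^2 \Cleq \sum_{F\in\FacesM}h_F^{-d}\snorm{\int_F\Jump{\sigma}{F}}^2 + \norm{\GradM\sigma}_{\Leb{\Domain}}^2$. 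I expect the main obstacle to be the correct normalization of the mean-jump weight $h_F^{-d}$ (as opposed to the $h_F^{-1}$ weight on $\norm{\Jump{\sigma}{F}}_{\Leb{F}}^2$ in Proposition~\ref{P:IP-smoother}): one must be careful that the Cauchy–Schwarz step $\snorm{\int_F\Jump{\sigma}{F}} \leq \snorm{F}^{\frac12}\norm{\Jump{\sigma}{F}}_{\Leb{F}}$ is \emph{not} used here, since keeping the mean alone is exactly what makes the bound usable for $f\in H^{-1}$; instead the power of $h_F$ must be chased through the $\snorm{K}^{\frac12}\rho_K^{-1}\snorm{F'}^{-1}$ scaling honestly. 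Everything else is a direct transcription of the Proposition~\ref{P:IP-smoother} argument.
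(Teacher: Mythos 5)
Your strategy of transplanting Lemma~\ref{DG-Poission-L:first-order-averaging} into the proof of Proposition~\ref{P:IP-smoother} is exactly the paper's intended route (it says so immediately before the proposition). The moment-conservation argument, the $\Lagr{1}$-invariance, and the weight bookkeeping producing $h_{F'}^{-d/2}$ for the mean-jump terms and an $O(C_{\Shape_\Mesh})$ weight for the gradient terms are all correct.

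There is, however, a concrete gap in your treatment of interior Lagrange nodes. You assert that ``for $\AvgOper{1}$ every node of $K$ lies in $\partial K$'', but the relevant sum is over $z\in\LagNod{\degree}(K)$ in the expansion $\sigma_{|K}-\AvgOper{1}\sigma_{|K}=\sum_{z\in\LagNod{\degree}(K)}\bigl(\sigma_{|K}(z)-\AvgOper{1}\sigma(z)\bigr)\Psi_{K,z}^\degree$, and for $\degree\geq d+1$ the set $\LagNod{\degree}(K)$ does contain nodes in the interior of $K$. Unlike $\AvgOper{\degree}$ (which interpolates at those nodes, so they contribute nothing via Lemma~\ref{L:averaging-for-IP}), $\AvgOper{1}\sigma$ is merely affine on $K$, so $\AvgOper{1}\sigma(z)\neq\sigma_{|K}(z)$ there in general, and Lemma~\ref{DG-Poission-L:first-order-averaging} as stated only covers $z\in\LagNod{\degree}(K)\cap F$ with $F\in\Faces{K}$. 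You therefore have an unbounded family of nodal coefficients whenever $\degree\geq d+1$. The repair is short but has to be said: Case~2 of the lemma's proof carries over with $\LagNod{1}(K)$ in place of $\LagNod{1}(F)$ and the partition $\sum_{y\in\LagNod{1}(K)}\lambda_y^K\equiv 1$ on $K$; for the term $\snorm{\sigma_{|K}(z)-\sigma_{|K}(y)}$ one then invokes the $\Lebp{\infty}{K}$--$\Leb{K}$ inverse estimate on $K$ rather than on a face (since $z$ need not lie on any $F$), giving again $h_K\snorm{K}^{-\frac12}\norm{\Grad\sigma}_{\Leb{K}}$. With that extension your argument goes through as written.
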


Combining the new smoothing operator $\widetilde{\smt}_\degree$ with one of the previous bilinear forms $b_\VAR$, $\VAR\in\{\NIP,\SIP\}$, leads to a nonconforming method $\widetilde{\app}_\VAR$ with discrete problem
\begin{equation}
\label{VAR-Poisson:discrete-problem}
 U \in \Polyell{\degree}
 \quad\text{such that}\quad
 \forall \sigma \in \Polyell{\degree} \;\;
 b_\VAR(U,\sigma) = \langle f, \widetilde{\smt}_\degree \sigma \rangle, 
\end{equation} 
which is well-posed for all $\eta > \eta_\VAR$. Hereafter
\begin{equation*}
  \eta_\VAR
  :=
  \begin{cases}
   0, &\text{if }\VAR=\NIP,
  \\
   \eta_*, &\text{if }\VAR=\SIP,
  \end{cases}
\quad\text{and}\quad
  \alpha_\VAR(t)
  :=
  \begin{cases}
  1, &\text{if }\VAR=\NIP,
  \\
  1-\sqrt{t}, &\text{if }\VAR=\SIP.
  \end{cases}
\end{equation*}
As $\widetilde{\smt}_\degree$ is only invariant on the strict subset $\Lagr{1}$ of \eqref{DG-Poisson:conforming-part} for $\degree\geq2$, the method $\widetilde{\app}_\VAR$ is \emph{not} a nonconforming Galerkin method.
Nevertheless:

\begin{theorem}[Quasi-optimality of $\widetilde{\app}_\VAR$]
\label{T:qopt-VAR}
Let $\VAR\in\{\NIP,\SIP\}$. If $\eta > \eta_\VAR$, the method $\widetilde{\app}_\VAR$ is $\snorm{\cdot}_{1;\eta}$-quasi-optimal for the Poisson problem \eqref{intro-Poisson} with constant $\leq C_{d,\Shape_\Mesh,\degree} \sqrt{1+\big( \alpha_\VAR(\eta_*/\eta)\eta \big)^{-1}}$.
\end{theorem}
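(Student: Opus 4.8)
The plan is to mimic the proof of Theorem~\ref{T:qopt-NIP}, using Theorem~\ref{T:abs-qopt} together with the properties of the new smoother $\widetilde{\smt}_\degree$ established in Proposition~\ref{P:IP-smoother-with-first-order-averaging}. First I would record full algebraic consistency: since $b_\VAR = (\cdot,\cdot)_1$ on $\Lagr{\degree}$ and $\widetilde{\smt}_\degree$ conserves the moments in \eqref{DG-for-Poisson:conditions-for-E} (so that Lemma~\ref{L:Poisson:moment-conservation} applies), one checks $\bCons(u,\sigma) = b_\VAR(u,\sigma) - (u,\widetilde{\smt}_\degree\sigma)_{1;\eta} = 0$ for $u \in \Lagr{\degree}=\Polyell{\degree}\cap V$ and all $\sigma\in\Polyell{\degree}$; here the jump-penalty and $\bCons_\VAR$ terms all vanish because $\Jump{u}{}=0$. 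By Theorem~\ref{T:abs-qopt}(ii) the method is quasi-optimal, and by (iii) the quasi-optimality constant is $\sqrt{1+\delta^2}$, so the whole task reduces to bounding $\delta$.

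To bound $\delta$ I would fix $v \in \SobH{\Domain}$, $\sigma \in \Polyell{\degree}$, let $\Ritz_{\eta,\degree}$ be the $(\cdot,\cdot)_{1;\eta}$-orthogonal projection onto $\Polyell{\degree}$, and expand $\bCons(\Ritz_{\eta,\degree}v,\sigma) = b_\VAR(\Ritz_{\eta,\degree}v,\sigma) - (v,\widetilde{\smt}_\degree\sigma)_1$. Using $\Jump{v}{}=0$, $\Jump{\widetilde{\smt}_\degree\sigma}{}=0$, the definition of $\Ritz_{\eta,\degree}$, and Lemma~\ref{L:Poisson:moment-conservation} (applied to $\widetilde{\smt}_\degree$), this collapses — exactly as in the proof of Theorem~\ref{T:qopt-NIP} — to
\begin{equation*}
 \bCons(\Ritz_{\eta,\degree}v,\sigma)
 =
 (\Ritz_{\eta,\degree}v - v,\widetilde{\smt}_\degree\sigma - \sigma)_1
 \;\pm\;
 \int_\Sigma \Jump{\Ritz_{\eta,\degree}v - v}{F}\,\Avg{\Grad\sigma}{F}\cdot\nF,
\end{equation*}
the sign depending on $\VAR$. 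The second term is estimated via Cauchy--Schwarz and \eqref{discrete-normal-derivative-bound} by $C\,\eta^{-1/2}\snorm{\sigma}_{1;\eta}\snorm{\Ritz_{\eta,\degree}v-v}_{1;\eta}$, just as before. The genuinely new point is the first term: whereas Proposition~\ref{P:IP-smoother} gave $\norm{\GradM(\sigma-E_\degree\sigma)}_{\Leb{\Domain}} \lesssim \norm{h^{-1/2}\Jump{\sigma}{}}_{\Leb{\Sigma}}$, Proposition~\ref{P:IP-smoother-with-first-order-averaging} only gives a bound by $\big(\sum_F h_F^{-d}|\int_F\Jump{\sigma}{F}|^2 + \norm{\GradM\sigma}_{\Leb{\Domain}}^2\big)^{1/2}$, and the $\norm{\GradM\sigma}_{\Leb{\Domain}}$ contribution cannot be absorbed into $\eta^{-1/2}\snorm{\sigma}_{1;\eta}$. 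This is why the final constant carries a bare $C_{d,\Shape_\Mesh,\degree}$ factor rather than tending to $1$ as $\eta\to\infty$.

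So the key quantitative step is: bound $\norm{\GradM(\sigma - \widetilde{\smt}_\degree\sigma)}_{\Leb{\Domain}} \le C_{d,\Shape_\Mesh,\degree}\,\snorm{\sigma}_{1;\eta}$ uniformly (using $h_F^{-d}|\int_F\Jump{\sigma}{F}|^2 \le h_F^{-1}\norm{\Jump{\sigma}{F}}_{\Leb{F}}^2 \cdot C |F| h_F^{-(d-1)} \lesssim h_F^{-1}\norm{\Jump{\sigma}{F}}_{\Leb{F}}^2$ by Cauchy--Schwarz on $F$ and $|F|\lesssim h_F^{d-1}$, plus $\norm{\GradM\sigma}_{\Leb{\Domain}} \le \snorm{\sigma}_{1;\eta}$ for $\eta\ge 0$, or more carefully $\le \snorm{\sigma}_{1;\eta}$ trivially), whence $|(\Ritz_{\eta,\degree}v-v,\widetilde{\smt}_\degree\sigma-\sigma)_1| \le \norm{\GradM(\Ritz_{\eta,\degree}v-v)}_{\Leb{\Domain}}\,\norm{\GradM(\widetilde{\smt}_\degree\sigma-\sigma)}_{\Leb{\Domain}} \le C_{d,\Shape_\Mesh,\degree}\snorm{\Ritz_{\eta,\degree}v-v}_{1;\eta}\snorm{\sigma}_{1;\eta}$. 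Combining the two contributions gives $\snorm{\bCons(\Ritz_{\eta,\degree}v,\sigma)} \le C_{d,\Shape_\Mesh,\degree}(1+\eta^{-1/2})\,\snorm{\sigma}_{1;\eta}\inf_{s}\snorm{v-s}_{1;\eta}$; dividing by the coercivity constant $\alpha_\VAR(\eta_*/\eta)$ from \eqref{SIP:coercivity} (or $1$ when $\VAR=\NIP$) as in Remark~\ref{R:stab-cons-infsup} yields $\delta \le C_{d,\Shape_\Mesh,\degree}\big(\alpha_\VAR(\eta_*/\eta)\eta\big)^{-1/2}$ after also noting $1 \le C\eta^{-1/2}$ is false so one keeps the dominant $\eta^{-1/2}$-free term; then $\Cqopt = \sqrt{1+\delta^2} \le C_{d,\Shape_\Mesh,\degree}\sqrt{1+(\alpha_\VAR(\eta_*/\eta)\eta)^{-1}}$. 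The main obstacle is purely bookkeeping: tracking which terms survive in $\bCons$ for the two choices $\VAR\in\{\NIP,\SIP\}$ and verifying that the non-jump part of the Proposition~\ref{P:IP-smoother-with-first-order-averaging} bound is exactly what forces the constant to stay bounded below by a mesh-dependent constant rather than approaching $1$.
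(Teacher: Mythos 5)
Your proposal follows the paper's proof almost exactly: use the moment conservation of $\widetilde{\smt}_\degree$ (Proposition~\ref{P:IP-smoother-with-first-order-averaging}) for full algebraic consistency, replicate the decomposition of $\bCons(\Ritz_{\eta,\degree}v,\sigma)$ from the proof of Theorem~\ref{T:qopt-NIP}, and then observe that the new stability bound contains a bare $\norm{\GradM\sigma}_{\Leb{\Domain}}$ term, which is controlled only by $\snorm{\sigma}_{1;\eta}$ rather than by $\eta^{-1/2}\snorm{\sigma}_{1;\eta}$; the conversion $h_F^{-d}\big|\int_F\Jump{\sigma}{F}\big|^2 \lesssim h_F^{-1}\norm{\Jump{\sigma}{F}}_{\Leb{F}}^2$ via Cauchy--Schwarz and $\snorm{F}\lesssim h_F^{d-1}$ is precisely the inequality the paper invokes. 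You also correctly diagnose the reason the constant cannot tend to $1$ as $\eta\to\infty$.

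The one genuine flaw is the intermediate line ``yields $\delta \le C_{d,\Shape_\Mesh,\degree}\big(\alpha_\VAR(\eta_*/\eta)\eta\big)^{-1/2}$.'' That cannot follow from $\gamma \lesssim 1 + \eta^{-1/2}$: dividing by $\alpha_\VAR(\eta_*/\eta)$ gives $\delta \lesssim \big(1+\eta^{-1/2}\big)/\alpha_\VAR(\eta_*/\eta)$, whose dominant piece for large $\eta$ is a constant, not $\eta^{-1/2}$ — as you yourself note in the very next clause. The sentence contradicts the displayed bound, and if you took that bound at face value you would (wrongly) recover a constant tending to $1$ as $\eta\to\infty$. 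The correct chain is $\delta \lesssim \sqrt{1+\eta^{-1}}/\alpha_\VAR(\eta_*/\eta)$ (since $1+\eta^{-1/2} \le \sqrt{2}\sqrt{1+\eta^{-1}}$), and then $\Cqopt = \sqrt{1+\delta^2}$ yields the stated form $C_{d,\Shape_\Mesh,\degree}\sqrt{1+(\alpha_\VAR(\eta_*/\eta)\eta)^{-1}}$ after absorbing the $\alpha_\VAR$-dependence into the generic constant. With that intermediate formula corrected, the argument is sound and matches the paper's.
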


\begin{proof}
Proceed as in the proof of Theorem~\ref{T:qopt-NIP} or as indicated for Theorem~\ref{T:qopt-SIP}, replacing $\smt_\degree$ by $\widetilde{\smt}_\degree$. The only difference is that, in the derivation of the counterpart of \eqref{DG-Poisson:bound-for-delta}, we use
\begin{equation*}
 \sum_{F \in \FacesM} h_F^{-d} \left|
   \int_F  \Jump{\sigma}{F} 
 \right|^2
 \Cleq
 \sum_{F \in \FacesM} h_F^{-1}
 \int_F  \left| \Jump{\sigma}{F} \right|^2 
\end{equation*}
and obtain only
\begin{equation*}
 |b_\VAR(\Ritz_{\eta,\degree} v,\sigma) - (v,\widetilde{\smt}_\degree\sigma)_1|
 \leq
 C_{d,\Shape_\Mesh,\degree}
 \sqrt{1+\big( \alpha_\VAR(\eta_*/\eta)\eta \big)^{-1}}
 \snorm{\sigma}_{1;\eta}
 \snorm{\Ritz_{\eta,\degree}v - v}_{1;\eta}
\end{equation*}
because the stability bound in Proposition~\ref{P:IP-smoother-with-first-order-averaging} involves gradient terms.
\end{proof}
%

%
%

%
%
\subsection{A quasi-optimal and locking-free method for linear elasticity}
\label{S:CR-elasticity}
%
The goal of this section is to conceive a quasi-optimal and locking-free method for linear elasticity.

Given $\Domain \subseteq \R^d$ as in \S\ref{S:simplices-meshes}, we consider the displacement formulation of the linear elasticity problem with pure displacement boundary conditions: find $u \in \SobH{\Domain}^d$ such that
\begin{equation}
\label{LE:problem}
 - \Div \big( 2 \mu \SymGrad(u) + \lambda \Div(u) \big) = f
 \text{ in }\Omega,
\quad
 u =0 \text{ on }\partial\Omega.
\end{equation}
Hereafter $\SymGrad(v) := (\Grad v + \Grad v^T) /2$ is the symmetric gradient and $\mu$, $\lambda>0$ are the Lam\'{e} coefficients. We shall mostly suppress the dependencies on $\mu$ in the notation, while we trace the ones on $\lambda$.

Let $\Mesh$ be a mesh of $\Omega$ as in \S\ref{S:simplices-meshes} and, for $
\eta\geq0$, define
\begin{equation}
\label{a_lambda,eta}
\begin{gathered}
 a_{\lambda;\eta}(v,w)
 :=
 \int_\Domain \left( 2\mu \SymGradM (v) : \SymGradM(w)
 + \lambda \DivM v \DivM w \right)
 +
 \int_{\Sigma} \frac{\eta}{h} \Jump{v}{F} \Jump{w}{F},
\\
 \norm{v}_{\lambda;\eta}
 =
 a_{\lambda;\eta}(v,v)^{\frac{1}{2}}
\end{gathered}
\end{equation}
for $v,w \in H^1(\Mesh)^d$ and abbreviate $a_{\lambda;0}$ to $a_{\lambda}$. We aim at applying Theorem~\ref{T:abs-qopt} with the following setting:
\begin{equation}
\label{LE:setting}
 V = \SobH{\Domain}^d,
\quad
 S \subseteq \Polyell{1},
\quad
 \aext = a_{\lambda;\eta} \text{ on } \Vext = \SobH{\Domain}+S,
\end{equation}
where $S$ will be specified below, $\eta>0$, and the colon indicates the matrix scalar product $G:H = \sum_{j,\ell = 1}^{d} G_{j\ell} H_{j\ell}$. Notice that $a_{\lambda;\eta}$ is then a scalar product and \eqref{ex-prob} provides a weak formulation of \eqref{LE:problem}.

We readily deduce the following counterpart of Lemma~\ref{L:Poisson:moment-conservation}.
\begin{lemma}[Moment conservation] 
\label{L:LE:moment-conservation}
If the linear operator $\smt:(\Polyell{1})^d \to \SobH{\Domain}^d$ satisfies
\begin{equation}
\label{LE:conditions-for-E}
 \forall \sigma\in(\Polyell{1})^d, F\in\FacesMint
\quad
 \int_F \smt\sigma 
 =
 \int_F \Avg{\sigma}{F},
\end{equation}
then, for all $s,\sigma \in (\Polyell{1})^d$,
\begin{align*}
 a_{\lambda;\eta}(s, \smt\sigma) 
 &=
 a_{\lambda}(s,\sigma)
 - \int_{\Sigma} \big(
    \Avg{ 2\mu \SymGradM(s) + \lambda \DivM(s) I}{F}
  \big) \, \nF \cdot \Jump{\sigma}{F}.
\end{align*} 

\end{lemma}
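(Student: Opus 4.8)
The plan is to mimic the proof of Lemma~\ref{L:Poisson:moment-conservation}: start from $a_{\lambda;\eta}(s,\smt\sigma)$, use that $\smt\sigma \in \SobH{\Domain}^d$ so the jump term in \eqref{a_lambda,eta} vanishes, integrate by parts piecewise to move derivatives off $\smt\sigma$, and exploit the moment conservation \eqref{LE:conditions-for-E} to replace integrals against $\smt\sigma$ by integrals against $\Avg{\sigma}{F}$ on interior faces. In parallel, I would expand $a_{\lambda}(s,\sigma)$ by the same piecewise integration by parts and compare the two resulting expressions.

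First I would write, for $s,\sigma \in (\Polyell{1})^d$,
\begin{equation*}
 a_{\lambda;\eta}(s,\smt\sigma)
 =
 \int_\Domain \big( 2\mu\,\SymGradM(s) : \Grad(\smt\sigma) + \lambda\,\DivM s\,\Div(\smt\sigma) \big),
\end{equation*}
using that the jump term drops because $\Jump{\smt\sigma}{F}=0$ and that $\SymGradM(s):\Grad(\smt\sigma) = \SymGradM(s):\SymGrad(\smt\sigma)$ by symmetry of $\SymGradM(s)$. Applying piecewise integration by parts \eqref{pwIbP} componentwise (the relevant matrix field being $2\mu\,\SymGradM(s) + \lambda\,\DivM(s)\,I$, which is $\Mesh$-piecewise affine, hence piecewise $H^1$), the volume term becomes $\sum_{K}\int_K \big(-\Div(2\mu\,\SymGrad(s)+\lambda\,\Div(s)I)\big)\cdot \smt\sigma$ plus a skeleton term $\int_{\Sigma\setminus\partial\Omega} \Avg{2\mu\,\SymGradM(s)+\lambda\,\DivM(s)I}{F}\,\nF \cdot \Jump{\smt\sigma}{F}$ — which again vanishes — together with a term $\int_{\Sigma\setminus\partial\Omega} \Jump{2\mu\,\SymGradM(s)+\lambda\,\DivM(s)I}{F}\,\nF \cdot \Avg{\smt\sigma}{F}$. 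Since $\smt\sigma$ is continuous, $\Avg{\smt\sigma}{F}=\smt\sigma$ on each interior face, and the jumps $\Jump{2\mu\,\SymGradM(s)+\lambda\,\DivM(s)I}{F}\,\nF$ are $(\Poly{0}(F))^d$-valued; thus \eqref{LE:conditions-for-E} lets me replace $\smt\sigma$ by $\Avg{\sigma}{F}$ there.

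Next I would do the identical computation for $a_\lambda(s,\sigma)$: piecewise integration by parts gives $\sum_K \int_K(-\Div(\cdots))\cdot\sigma$, a term $\int_{\Sigma\setminus\partial\Omega}\Jump{2\mu\,\SymGradM(s)+\lambda\,\DivM(s)I}{F}\,\nF\cdot\Avg{\sigma}{F}$, and a term $\int_{\Sigma}\Avg{2\mu\,\SymGradM(s)+\lambda\,\DivM(s)I}{F}\,\nF\cdot\Jump{\sigma}{F}$ — the last one now ranging over all faces including boundary faces, using the boundary conventions \eqref{jumps-avgs@bdy}. Subtracting, the $\sum_K\int_K(\cdots)$ pieces and the $\Avg{\sigma}{F}$ pieces cancel exactly, leaving precisely $a_{\lambda;\eta}(s,\smt\sigma) = a_\lambda(s,\sigma) - \int_\Sigma \Avg{2\mu\,\SymGradM(s)+\lambda\,\DivM(s)I}{F}\,\nF\cdot\Jump{\sigma}{F}$, which is the claim.

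The routine-but-only-mild obstacle is bookkeeping the matrix-vector integration by parts correctly — making sure that $\Grad(\smt\sigma)$ may be symmetrized against $\SymGradM(s)$, that the divergence term is handled by the same identity with the matrix $\DivM(s)I$, and that the boundary faces are tracked consistently with \eqref{jumps-avgs@bdy} so that the final skeleton integral is over all of $\Sigma$ (on $\partial\Omega$ one has $\Jump{\sigma}{F}=\sigma_{|K}$, matching the trace term $\int_{\partial\Omega}$ in \eqref{pwIbP}). No genuine difficulty arises; the argument is a direct vector-valued transcription of Lemma~\ref{L:Poisson:moment-conservation} with the scalar $\nabla$ replaced by $2\mu\,\SymGrad + \lambda\,\Div\,I$ and with only face moments (not element moments) needed because $S\subseteq(\Polyell{1})^d$ makes the element-moment conditions vacuous.
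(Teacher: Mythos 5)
Your proof is correct and follows precisely the route the paper has in mind (the paper says it "readily deduces" this as the vector counterpart of Lemma~\ref{L:Poisson:moment-conservation}): piecewise integration by parts of $\int_\Omega (2\mu\SymGradM(s)+\lambda\DivM(s)I):\Grad w$ for $w=\smt\sigma$ and $w=\sigma$, cancellation of the jump term and boundary trace for $\smt\sigma\in\SobH{\Domain}^d$, and use of the face-moment condition to match the $\Avg{\sigma}{F}$ terms. One small cosmetic point: since $s\in(\Polyell{1})^d$, the stress tensor $2\mu\SymGradM(s)+\lambda\DivM(s)I$ is piecewise \emph{constant} (not merely piecewise affine), which is exactly why its elementwise divergence vanishes — so the two $\sum_K\int_K(\cdots)$ terms are each identically zero rather than cancelling against one another — and why $\Jump{\cdot}{F}\nF$ is $(\Poly{0}(F))^d$-valued, allowing the face-moment condition to apply.
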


In \S\ref{S:DG-for-Poisson}, the impact on coercivity or symmetry of the counterpart of the term $\int_{\Sigma} (\Avg{ 2\mu \SymGradM(s) + \lambda \DivM(s) I}{F})\nF \cdot \Jump{\sigma}{F}$ was compensated with the help of $\bCons(\cdot,\cdot)$. Here we shall handle it with the choice of the discrete space $S$. More precisely, if we choose the Crouzeix-Raviart space
\begin{equation}
\label{LE:S}
 S = \CRS{}^d 
\quad\text{with}\quad
 \CRS{}
 :=
 \{ s \in \Polyell{1} \mid 
  \forall F \in \Faces \; \int_F \Jump{s}{F} = 0  \}
\end{equation}
with homogeneous boundary conditions, then this term vanishes because, on each face $F\in\FacesM$, the average $(\Avg{ 2\mu \SymGradM(s) + \lambda \DivM(s) I}{F}) \nF$ is a constant. Furthermore, $\int_F \sigma$, $F\in\FacesM$, is well-defined and equals the right-hand side of \eqref{LE:conditions-for-E}. The conforming part of $\CRS{}^d$ is
\begin{equation*}
 \CRS{}^d \cap \SobH{\Domain}^d = (\Lagr{1})^2,
\end{equation*}
which is a strict subspace for $\#\Mesh>1$. Finally, Arnold \cite{Arnold:93} shows that, for certain choices of $\Domain$ and $\Mesh$, there is a nonzero function
\begin{equation}
\label{Arnold-CR-function}
 s_0 \in \CRS{}^2 \setminus \{0\}
\quad\text{with}\quad
 \SymGradM(s_0) = 0 \text{ and }\DivM s_0 = 0,
\end{equation}
entailing, in contrast to the Poisson problem,
\begin{equation*}
 0 \neq s_0 \in \CRS{}^2 \cap (\SobH{\Domain}^2)^\perp
\end{equation*}
and overconsistency is in general ruled out by Lemma \ref{L:a(.,E.)}.

As \eqref{LE:conditions-for-E} is the vector version of \eqref{DG-for-Poisson:conditions-for-E} for $\degree=1$, we can take the computionally feasible smoothing operator $\smt_1$ from Proposition \ref{P:IP-smoother} componentwise. We denote this vector version again by $\smt_1$. Since $a_{\lambda;\eta}(\cdot,\smt_1\cdot)$ may be degenerate in view of  \eqref{Arnold-CR-function}, we take
\begin{equation}
 b_{\HL}
 :=
 a_{\eta,\lambda} + d_{\HL}
\quad\text{with}\quad 
 \bCons_\HL(s,\sigma)
 =
 \int_{\Sigma} \frac{\eta}{h} \Jump{s}{F} \Jump{\sigma}{F}
\quad\text{with}\quad
 \eta>0,
\end{equation}
which is the discrete bilinear form in Hansbo and Larson \cite[eq.\ (26)]{Hansbo.Larson:03}. We thus introduce a \emph{new penalized Crouzeix-Raviart method} $\app_\HL=(\CRS{}^d,b_{\HL},E_1)$ given by the following discrete problem: find $U \in \CRS{}^d$ such that
\begin{equation}
\label{LE:M_LH}
 \int_{\Omega} \big(
  2 \mu \SymGradM(U) : \SymGradM(\sigma) + \lambda \DivM U \DivM \sigma
 \big)
 +
 \int_{\Sigma} \frac{\eta}{h} \Jump{s}{F} \Jump{\sigma}{F}
 =
 \langle f, E_1\sigma \rangle
\end{equation}
for all $\sigma \in \CRS{}^d$. The method $\app_\HL$ is a nonconforming Galerkin method. The modification of the right-hand side with respect to \cite{Hansbo.Larson:03} allows to apply $H^{-1}(\Domain)$-volume forces with the following property.

\begin{theorem}[Quasi-optimality of $\app_\HL$]
\label{T:qopt-elasticity}
The method $\app_\HL$ is $\norm{\cdot}_{\lambda;\eta}$-quasi-optimal for \eqref{LE:problem} with constant $\leq \sqrt{1 + C_{d,\Shape_\Mesh}(2\mu+\lambda)\eta^{-1}}$.
\end{theorem}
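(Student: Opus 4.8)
The plan is to invoke the abstract machinery of Theorem~\ref{T:abs-qopt} in the setting \eqref{LE:setting}, just as in the proofs of Theorems~\ref{T:qopt-NIP} and~\ref{T:qopt-SIP}. First I would check the hypotheses of Theorem~\ref{T:abs-qopt}. Full algebraic consistency follows because $\app_\HL$ is a nonconforming Galerkin method: on the conforming part $\CRS{}^d \cap \SobH{\Domain}^d = (\Lagr{1})^d$ the jump terms vanish, so $b_\HL = a_{\lambda;\eta} = a_\lambda = a_{\lambda;\eta}(\cdot,\smt_1\cdot)$ there (using Lemma~\ref{L:LE:moment-conservation} and $\smt_1 = \id$ on $(\Lagr{1})^d$, which comes from Proposition~\ref{P:IP-smoother} applied componentwise). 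Well-posedness / nondegeneracy of $b_\HL$ holds by construction: $b_\HL(s,s) = \norm{s}_{\lambda;\eta}^2$ via Korn's inequality on the broken space $\CRS{}^d$ (the penalty term controls the jumps, and this is exactly the coercivity in Hansbo--Larson). So by Theorem~\ref{T:abs-qopt}(iii) the method is quasi-optimal with constant $\sqrt{1+\delta^2}$, and it remains to bound $\delta$.

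To bound $\delta$, I would follow the template of the proof of Theorem~\ref{T:qopt-NIP}. Let $v \in \SobH{\Domain}^d$, $\sigma \in \CRS{}^d$, and let $\Ritz$ denote the $a_{\lambda;\eta}$-orthogonal projection onto $\CRS{}^d$. The consistency functional is $d(v,\sigma) = b_\HL(\Ritz v,\sigma) - a_{\lambda;\eta}(v,\smt_1\sigma)$. Using $\Jump{v}{} = 0 = \Jump{\smt_1\sigma}{}$ (both $v$ and $\smt_1\sigma$ are in $\SobH{\Domain}^d$), the penalty part $\bCons_\HL(\Ritz v,\sigma)$ and the jump contributions in $a_{\lambda;\eta}$ collapse, and an application of Lemma~\ref{L:LE:moment-conservation} to the pair $(\Ritz v,\sigma)$ together with the definition of $\Ritz$ should rewrite $d(v,\sigma)$ as
\begin{equation*}
 d(v,\sigma)
 =
 a_\lambda(\Ritz v - v, \smt_1\sigma - \sigma)
 -
 \int_{\Sigma} \big(\Avg{2\mu\SymGradM(\Ritz v)+\lambda\DivM(\Ritz v)I}{F}\big)\,\nF\cdot\Jump{\Ritz v - v}{F},
\end{equation*}
where I have used that $\Jump{\Ritz v}{} = \Jump{\Ritz v - v}{}$. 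The first term is bounded by $\norm{\Ritz v - v}_{\lambda;\eta}\,\norm{\SymGradM(\smt_1\sigma)-\SymGradM(\sigma)}$-type quantities and then by the stability bound of Proposition~\ref{P:IP-smoother} (componentwise): $\norm{\GradM(\sigma - \smt_1\sigma)}_{\Leb{\Domain}} \lesssim \norm{h^{-1/2}\Jump{\sigma}{}}_{\Leb{\Sigma}}$, which is $\lesssim \eta^{-1/2}\norm{\sigma}_{\lambda;\eta}$; the Lamé dependence is picked up as a factor $\sqrt{2\mu+\lambda}$ when passing from the $H^1$-seminorm to the elasticity seminorm. The second term is bounded via a discrete trace/inverse inequality $\norm{h^{1/2}\Avg{2\mu\SymGradM(\cdot)+\lambda\DivM(\cdot)I}{}}_{\Leb{\Sigma}} \lesssim \sqrt{2\mu+\lambda}\,\norm{\cdot}_{\lambda}$ (the analogue of \eqref{discrete-normal-derivative-bound}) applied to $\Ritz v$, paired with $\norm{h^{-1/2}\Jump{\Ritz v - v}{}}_{\Leb{\Sigma}} \le \eta^{-1/2}\norm{\Ritz v - v}_{\lambda;\eta}$. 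Combining, $\snorm{d(v,\sigma)} \lesssim \sqrt{(2\mu+\lambda)\eta^{-1}}\,\norm{\sigma}_{\lambda;\eta}\,\norm{\Ritz v - v}_{\lambda;\eta}$; since $\inf_{s\in S}\norm{v-s}_{\lambda;\eta} = \norm{\Ritz v - v}_{\lambda;\eta}$ and the coercivity constant of $b_\HL$ is $1$, this gives $\delta \le C_{d,\Shape_\Mesh}\sqrt{(2\mu+\lambda)\eta^{-1}}$, hence $\Cqopt = \sqrt{1+\delta^2} \le \sqrt{1 + C_{d,\Shape_\Mesh}(2\mu+\lambda)\eta^{-1}}$.

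The main obstacle I anticipate is twofold. First, one must make sure the constant in the discrete trace inequality for $\Avg{2\mu\SymGradM(\cdot)+\lambda\DivM(\cdot)I}{F}$ genuinely scales like $2\mu+\lambda$ and not worse (e.g.\ not $\lambda^2$), i.e.\ keeping the Lamé dependence sharp and $\mu$-uniform in the way the theorem claims; this is the point that makes the bound robust/locking-free as $\lambda\to\infty$. Second, one needs that Lemma~\ref{L:LE:moment-conservation} really applies to $\Ritz v \in \CRS{}^d$ and that the boundary faces contribute correctly — here the choice $S = \CRS{}^d$ is essential because it makes $\Avg{\cdots}{F}\nF$ constant on each face so that only the $\int_F\Jump{\sigma}{F}$-moment (which vanishes for Crouzeix--Raviart test functions) matters, which is why no extra $\bCons$-term beyond the penalty is needed; I would double-check that in the $d(v,\sigma)$ computation the surface term pairs against $\Jump{\Ritz v - v}{}$ rather than $\Jump{\sigma}{}$, so that it is controlled by the projection error and not by $\sigma$ alone. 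Everything else is a routine adaptation of the Poisson arguments, with vectors and the elasticity bilinear form in place of scalars and the Dirichlet form.
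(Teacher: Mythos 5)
Your overall route is the paper's route: apply Theorem~\ref{T:abs-qopt} with Lemma~\ref{L:LE:moment-conservation} and (the componentwise) Proposition~\ref{P:IP-smoother}, and bound $\delta$ through the $a_{\lambda;\eta}$-orthogonal projection. However, your consistency identity contains a spurious surface term, and the way you propose to estimate it is a genuine gap. In the correct computation the surface term produced by Lemma~\ref{L:LE:moment-conservation} pairs the average stress of the \emph{first} (discrete) argument against the jump $\Jump{\sigma}{F}$ of the test function, and it vanishes \emph{identically}: for $\Ritz v\in\CRS{}^d$ the field $2\mu\SymGradM(\Ritz v)+\lambda\DivM(\Ritz v)I$ is piecewise constant, so its average is constant on each face, while $\int_F\Jump{\sigma}{F}=0$ by the Crouzeix--Raviart coupling. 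Combining this with the definition of $\Ritz$ (which converts the penalty term $\int_\Sigma \tfrac{\eta}{h}\Jump{\Ritz v}{F}\Jump{\sigma}{F}$ into $-a_\lambda(\Ritz v-v,\sigma)$, using $\Jump{v}{}=0$) gives the clean identity
\begin{equation*}
 b_\HL(\Ritz v,\sigma)-a_\lambda(v,\smt_1\sigma)
 =
 a_\lambda(\Ritz v-v,\smt_1\sigma-\sigma),
\end{equation*}
with no boundary term at all; this is exactly what the paper uses. Your extra term $\int_\Sigma\bigl(\Avg{2\mu\SymGradM(\Ritz v)+\lambda\DivM(\Ritz v)I}{F}\bigr)\nF\cdot\Jump{\Ritz v-v}{F}$ also happens to vanish (same facewise-constant average, and $\int_F\Jump{\Ritz v-v}{F}=\int_F\Jump{\Ritz v}{F}=0$), but you do not use this; instead you estimate it by a trace inequality applied to $\Ritz v$, obtaining a bound of the form $\sqrt{2\mu+\lambda}\,\norm{\Ritz v}_{\lambda}\,\eta^{-1/2}\norm{\Ritz v-v}_{\lambda;\eta}$. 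That bound is not admissible for $\delta$: the definition of $\delta$ requires the factor $\sup_{\hat s,\norm{\hat s}=1}b(\hat s,\sigma)$ (i.e.\ a quantity controlled by $\norm{\sigma}_{\lambda;\eta}$), whereas your estimate contains no $\sigma$-dependence and the factor $\norm{\Ritz v}_{\lambda}$ cannot be absorbed into the best error. So, as written, the "combining" step does not yield $\snorm{\bCons(v,\sigma)}\Cleq\sqrt{(2\mu+\lambda)\eta^{-1}}\norm{\sigma}_{\lambda;\eta}\norm{\Ritz v-v}_{\lambda;\eta}$.

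Your closing worry also has the mechanism reversed: one should \emph{not} try to make the surface term pair against $\Jump{\Ritz v-v}{}$; the whole point of choosing $S=\CRS{}^d$ (as the paper notes after \eqref{LE:S}) is that the term pairing against $\Jump{\sigma}{}$ is killed by the mean-jump property of Crouzeix--Raviart functions. Once the spurious term is recognized as zero, the remaining estimate you give for $a_\lambda(\Ritz v-v,\smt_1\sigma-\sigma)$ — Cauchy--Schwarz, the factor $\sqrt{2\mu+\lambda}$ from passing to $\norm{\GradM(\sigma-\smt_1\sigma)}_{\Leb{\Domain}}$, Proposition~\ref{P:IP-smoother}, and $\norm{h^{-1/2}\Jump{\sigma}{}}_{\Leb{\Sigma}}\leq\eta^{-1/2}\norm{\sigma}_{\lambda;\eta}$ together with the coercivity $b_\HL(\sigma,\sigma)=\norm{\sigma}_{\lambda;\eta}^2$ — is exactly the paper's argument and yields $\delta\leq C_{d,\Shape_\Mesh}\sqrt{(2\mu+\lambda)\eta^{-1}}$. (Minor points: coercivity holds by definition of $b_\HL$, no broken Korn inequality is needed for it; and full algebraic consistency should be justified for \emph{all} $\sigma\in\CRS{}^d$, not only conforming test functions — this follows from the same vanishing of the surface term, not merely from the nonconforming Galerkin property \eqref{NonConformingGalerkinMethod}.)
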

\begin{proof}
We first use Lemma~\ref{L:LE:moment-conservation} and Proposition~\ref{P:IP-smoother} for $\degree=1$ in Theorem~\ref{T:abs-qopt}. Then it remains to bound $\delta$ in item (iii) of Theorem~\ref{T:abs-qopt}. Let $v\in\SobH{\Domain}^d$, $\sigma \in \CRS{}^d$, and denote by $\Ritz_{\lambda;\eta}$ the $a_{\lambda;\eta}$-orthogonal projection onto $\CRS{}^d$. Lemma~\ref{L:LE:moment-conservation}, the definition of $\CRS{}$, $\Jump{v}{}=0=\Jump{\smt_1\sigma}{}$, and the definition of $\Ritz_{\lambda;\eta}$ imply
\begin{equation*}
 b_\HL(\Ritz_{\lambda;\eta}v,\sigma) - a_{\lambda}(v,\smt_1\sigma)
 =
 a_{\lambda}(\Ritz_{\lambda;\eta}v-v,\smt_1\sigma-\sigma)
\end{equation*}
and so Proposition~\ref{P:IP-smoother} yields
\begin{equation*}
 |b_\HL(\Ritz_{\lambda;\eta}v,\sigma) - a_{\lambda}(v,\smt_1\sigma)|
 \leq
 C_{d,\Shape_\Mesh} \sqrt{2\mu+\lambda} \, \eta^{-1/2}
  \norm{\Ritz_{\lambda;\eta}v-v}_{\lambda;\eta}
  \norm{\sigma}_{\lambda;\eta}.
\end{equation*}
Hence, we have $\delta \Cleq \sqrt{2\mu+\lambda} \, \eta^{-\frac{1}{2}}$ and the proof is finished.
\end{proof}

The following remarks show that the upper bound of the quasi-optimality constant $\Cqopt$ in Theorem~\ref{T:qopt-elasticity} captures the correct asymptotic behavior not only for the conforming limit $\eta\to\infty$.

\begin{remark}[$\Cqopt$ as $\eta\to0$]
\label{R:elasticity-eta}
The degeneracy of the bilinear form $a_{\lambda;\eta}(\cdot,E_1\cdot)$ entails $\Cqopt\geq C_{\lambda} \eta^{-\frac{1}{2}}$. To see this, suppose that $s_0$ satisfies \eqref{Arnold-CR-function} and notice that identity \eqref{LE:conditions-for-E} and \cite[Lemma 3.2]{Veeser.Zanotti:17p2} guarantee that $\smt_1$ is injective. We then have that $\norm{E_1 s_0}_{\lambda;\eta} = C_\lambda \neq 0$ and $\norm{s_0}_{\lambda;\eta} = C\eta^{\frac{1}{2}}$. 	Hence, Theorem~\ref{T:abs-qopt} yields $\Cqopt \geq \Cstab \geq C_{\lambda} \eta^{-\frac{1}{2}}$.
\end{remark}

The following remark is closely connected with Linke  \cite[Section~2]{Linke:14} concerning incompressible flows.
\begin{remark}[Deterioration of $\Cqopt$ for nearly incompressible materials]
\label{R:elasticity-lambda}
The property
\begin{equation}
\label{Div-kernels}
 E_1\big( \{ s \in \CRS{}^d \mid \DivM s = 0 \} \big)
 \not\subseteq
 \{v \in \SobH{\Domain}^d \mid \Div v = 0 \} 
\end{equation}
results in $\Cqopt \geq C_\eta \lambda^{\frac{1}{2}}$. Indeed, if $s \in \CRS{}^d$ such that $\DivM s = 0$ and $\Div (E_1 s) \neq 0$, we have $\norm{s}_{\lambda;\eta} = C_\eta$ and $\norm{E_1 s}_{\lambda;\eta} \approx C \lambda^{\frac{1}{2}}$ as $\lambda\to\infty$ and so Theorem~\ref{T:abs-qopt} implies $\Cqopt \geq \Cstab \geq C_\eta \lambda^{\frac{1}{2}}$.

In order to verify \eqref{Div-kernels}, fix any face $F \in \FacesMint$ of a given mesh $\Mesh$. Let $\Psi_F$ the associated basis function in  $\CRS{}$ with $\int_{F'} \Psi_F = \delta_{FF'}$ for all $F' \in\FacesM$ and $\Psi_{F |K} = 0$ whenever $F \not\in \Faces{K}$. Then, appropriately picking the elements $K_z$ in the definition \eqref{Averaging-def} of $\AvgOper{1}$, we can arrange $\AvgOper{1} \Psi_F = 0$ and so $\smt_1 \Psi_F = \beta \Phi_F$ with some $\beta>0$ and $\Phi_F$ as in \eqref{Q_F}. Consider $\Psi_F t_F \in \CRS{}^d$, where $t_F$ is a unit tangent vector of $F$. On the one hand, we have $\DivM (t_F \Psi_F) = t_F \cdot \GradM \Psi_F = 0$ and, on the other hand, $\Div \smt_d (t_F \Psi_F) = \beta \Div(t_F \Phi_F) = \beta t_F \cdot \Grad \Phi_F \neq 0$. 
\end{remark}
 
It is instructive to shed additional light on the performance of $\app_\HL$ for nearly incompressible materials. First, recall that the space $\Lagr{1}$ shows locking whenever $\{s \in \Lagr{1} \mid \Div s = 0 \}$ provides poor approximation; see \cite{Brenner.Scott:08}. Hence the choice $\eta \approx \lambda$ will also result in poor approximation for large $\lambda$. For fixed penalty parameter $\eta>0$, the following lemma, which is also of interest by its own, will be useful. It quantifies the difference between the original method $\hat{\app}_\HL$ of Hansbo and Larson,
and its new variant $\app_\HL$. Recall that, if $f \in \Leb{\Domain}^d$, the discrete solution $\hat{U} \in \CRS{}^d$ of Hansbo and Larson is given by
\begin{equation}
\label{disc-prob-HansLars}
 \forall \sigma \in \CRS{}^d
\quad
 b_\HL(\hat{U}, \sigma)
 =
 \int_\Domain f \cdot \sigma.
\end{equation} 

\begin{lemma}[$\app_\HL$ and $\hat{\app}_\HL$]
\label{L:HLnew-HLoriginal}
Assume $f \in \Leb{\Domain}^d$ and let $U,\hat{U}\in\CRS{}^d$ verify \eqref{LE:M_LH} and \eqref{disc-prob-HansLars}, respectively. Then
\begin{equation*}
 \norm{U-\hat{U}}_{\lambda;\eta}
 \leq
 C_{d,\Shape_\Mesh} \min \left\{1,\eta^{-\frac{1}{2}} \right\} \left(
  \sum_{K \in \Mesh} h_K^2 \norm{f}_{\Leb{K}^d}^2
 \right)^{\frac{1}{2}}.
\end{equation*}
\end{lemma}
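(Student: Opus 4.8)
The plan is to estimate $U-\hat U$ by testing the two discrete problems against the same $\sigma \in \CRS{}^d$ and using coercivity of $b_\HL$. Since both $U$ and $\hat U$ solve a problem with the same bilinear form $b_\HL$, subtraction gives
\begin{equation*}
 \forall \sigma \in \CRS{}^d
\quad
 b_\HL(U - \hat U, \sigma)
 =
 \langle f, E_1\sigma\rangle - \int_\Domain f\cdot\sigma
 =
 \int_\Domain f\cdot(E_1\sigma - \sigma),
\end{equation*}
because $f \in \Leb{\Domain}^d$ so $\langle f, E_1\sigma\rangle = \int_\Domain f\cdot E_1\sigma$. Choosing $\sigma = U - \hat U$ and using $b_\HL(s,s)\geq\norm{s}_{\lambda;\eta}^2$ (coercivity, as in the proof of Theorem~\ref{T:qopt-elasticity}, since $b_\HL = a_{\lambda;\eta} + \bCons_\HL$ with $\bCons_\HL(s,s)\geq0$, actually $b_\HL(s,s) = \norm{s}_{\lambda;\eta}^2 + \int_\Sigma \frac\eta h |\Jump s{}|^2 \geq \norm{s}_{\lambda;\eta}^2$), we get
\begin{equation*}
 \norm{U-\hat U}_{\lambda;\eta}^2
 \leq
 \int_\Domain f\cdot(E_1(U-\hat U) - (U-\hat U))
 \leq
 \norm{f}
 \,\norm{E_1(U-\hat U) - (U-\hat U)},
\end{equation*}
with a suitable weighted pairing on the right.

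The key step is then to bound $\int_\Domain f\cdot(E_1\sigma - \sigma)$ for $\sigma = U-\hat U$ in terms of $\big(\sum_K h_K^2\norm{f}_{\Leb{K}^d}^2\big)^{1/2}$ and $\norm{\sigma}_{\lambda;\eta}$. Here I would use the moment-conservation property \eqref{DG-for-Poisson:conditions-for-E} of $E_1$ (with $\degree=1$): since $E_1$ conserves the face means, $\int_F E_1\sigma = \int_F \Avg{\sigma}{F}$, and $E_1$ is invariant on $\Lagr{1}$. Writing $f_K$ for the mean value of $f$ on each $K$, one has $\int_\Domain f\cdot(E_1\sigma - \sigma) = \sum_K\int_K (f - f_K)\cdot(E_1\sigma-\sigma) + \sum_K\int_K f_K\cdot(E_1\sigma-\sigma)$. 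The first sum is controlled by $\sum_K h_K\norm{f}_{\Leb{K}^d}\norm{\GradM(E_1\sigma-\sigma)}_{\Leb{K}^d}$ via a Poincaré-type estimate, or more carefully split over the supports of $E_1\sigma-\sigma$. The second sum involves the piecewise-constant $f_K$: here one exploits that $E_1\sigma - \sigma$ has vanishing mean on interior faces and, in the $\degree=1$ case, that $\sigma - A_1\sigma$ and the bubble corrections integrate to zero against constants on each element — so $\sum_K\int_K f_K\cdot(E_1\sigma-\sigma) = 0$ or is absorbed into jump/gradient terms. Combining with Proposition~\ref{P:IP-smoother}, $\norm{\GradM(E_1\sigma-\sigma)}_{\Leb{\Domain}^d}\leq C_{d,\Shape_\Mesh}\norm{h^{-1/2}\Jump\sigma{}}_{\Leb\Sigma}\leq C_{d,\Shape_\Mesh}\,\eta^{-1/2}\norm{\sigma}_{\lambda;\eta}$ (the factor $\eta^{-1/2}$ coming from the $\eta/h$-weighted jump term in $\norm{\cdot}_{\lambda;\eta}$), one arrives at
\begin{equation*}
 \norm{U-\hat U}_{\lambda;\eta}^2
 \leq
 C_{d,\Shape_\Mesh}\,\eta^{-1/2}
 \Big(\sum_K h_K^2\norm{f}_{\Leb{K}^d}^2\Big)^{1/2}
 \norm{U-\hat U}_{\lambda;\eta},
\end{equation*}
hence the bound with $\eta^{-1/2}$; the alternative bound with the constant $1$ (for small $\eta$) follows by instead estimating $\norm{E_1\sigma-\sigma}_{\Leb\Domain^d}$ by $C_{d,\Shape_\Mesh}\norm{\sigma}_{\lambda;\eta}$ directly (using $\snorm K^{1/d}$-type scalings and $\norm{h^{-1/2}\Jump\sigma{}}_{\Leb\Sigma}\leq\norm{\sigma}_{\lambda;\eta}$ when $\eta\leq1$), so that $\min\{1,\eta^{-1/2}\}$ appears.

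The main obstacle I anticipate is the treatment of the $\sum_K \int_K f_K\cdot(E_1\sigma-\sigma)$ contribution: one must check that the piecewise-constant part of $f$ does not see the non-trivial part of $E_1\sigma-\sigma$, which requires the element-moment conservation $\int_K r\,E_1\sigma = \int_K r\,\sigma$ for $r$ constant — but for $\degree=1$ this reads $\int_K E_1\sigma = \int_K\sigma$ only if constants lie in $\Poly{\degree-2}(K)=\Poly{-1}(K)=\emptyset$, so it does \emph{not} hold, and one genuinely has a nonzero element-mean contribution. The correct route is therefore to not split off $f_K$ on elements but to use face means instead: integrate $f\cdot(E_1\sigma-\sigma)$ against a piecewise-constant approximation adapted to the structure of $E_1$, or simply bound everything through the gradient estimate of Proposition~\ref{P:IP-smoother} after subtracting elementwise constants from $f$ (the $h_K$ in $h_K\norm{f}_{\Leb K^d}$ coming from Poincaré on $K$, and possibly on patches $\omega_K$ since the support of $E_1\sigma-\sigma$ on $K$ depends on neighbouring jumps). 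Getting the shape-regularity constant to absorb the patch overlaps, and getting exactly the factor $\min\{1,\eta^{-1/2}\}$ rather than a worse power, is the delicate bookkeeping; everything else is routine given Propositions~\ref{P:IP-smoother} and the coercivity already recorded in the proof of Theorem~\ref{T:qopt-elasticity}.
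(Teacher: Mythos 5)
Your setup is right: both discrete problems share the bilinear form $b_\HL$, subtracting gives $b_\HL(U-\hat U,\sigma)=\int_\Domain f\cdot(\smt_1\sigma-\sigma)$, and since $b_\HL$ restricted to $\CRS{}^d$ coincides with $a_{\lambda;\eta}$, either coercivity with $\sigma=U-\hat U$ or the duality representation $\norm{U-\hat U}_{\lambda;\eta}=\sup_{\norm\sigma_{\lambda;\eta}=1}|b_\HL(U-\hat U,\sigma)|$ reduces the problem to bounding $|\int_\Domain f\cdot(\smt_1\sigma-\sigma)|$. Up to here you match the paper.

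The gap is in the middle. You split $f=f_K+(f-f_K)$, correctly observe that for $\degree=1$ the element moments are \emph{not} conserved (so $\int_K(\smt_1\sigma-\sigma)\neq 0$ in general), and then you don't actually close the argument — you gesture at ``use face means instead'' or ``subtract elementwise constants from $f$'' without identifying the concrete mechanism. The point you are missing is that the splitting of $f$ is unnecessary: the Poincar\'e inequality for $\smt_1\sigma-\sigma$ on each element $K$ is available without any element-mean condition, because the \emph{boundary} mean vanishes, $\int_{\partial K}(\smt_1\sigma-\sigma)=0$. This follows from face-mean conservation \eqref{DG-for-Poisson:conditions-for-E} together with $\sigma\in\CRS{}^d$: for interior faces $F$ one has $\int_F\smt_1\sigma=\int_F\Avg{\sigma}{F}=\int_F\sigma_{|K}$ since $\int_F\Jump\sigma F=0$, and for boundary faces both sides vanish. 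Hence $\norm{\smt_1\sigma-\sigma}_{\Leb{K}^d}\Cleq h_K\norm{\Grad(\smt_1\sigma-\sigma)}_{\Leb{K}^d}$ directly, giving $|\int_\Domain f\cdot(\smt_1\sigma-\sigma)|\Cleq(\sum_K h_K^2\norm f_{\Leb{K}^d}^2)^{1/2}\norm{\GradM(\smt_1\sigma-\sigma)}_{\Leb\Domain^d}$ with no need to touch $f$ at all. The $\min\{1,\eta^{-1/2}\}$ then arises from bounding $\norm{\GradM(\smt_1\sigma-\sigma)}$ in two ways: via Proposition~\ref{P:IP-smoother} and the $\eta/h$-weighted jump term of $\norm\cdot_{\lambda;\eta}$ (giving $\eta^{-1/2}$), and via the companion gradient bound $\norm{\GradM(\smt_1\sigma-\sigma)}\Cleq\norm{\GradM\sigma}$ from \cite[Proposition~3.3]{Veeser.Zanotti:17p2} combined with the broken Korn inequality \cite[Theorem~3.1]{Brenner:04} to control $\norm{\GradM\sigma}$ by $\norm\sigma_{\lambda;\eta}$ (giving the constant $1$) — your suggested direct $L^2$-bound for $\smt_1\sigma-\sigma$ without passing through the gradient would not produce the required $h_K$ factors. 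So: right decomposition of the problem, right ingredients cited, but the key step — boundary-mean Poincar\'e on each element — is not identified, and the surrogate ideas you propose either do not apply (element-mean conservation) or are left vague.
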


\begin{proof}
The definition of $U$ and $\hat{U}$ immediately give
\begin{equation*}
\label{est-ElastQopt-duality}
 \norm{U - \hat{U}}_{\lambda;\eta}
 =
 \sup \limits_{\norm{\sigma}_{\lambda;\eta}=1 } 
  \snorm{b_\HL(U-\hat{U}, \sigma)}
 =
 \sup \limits_{\norm{\sigma}_{\lambda;\eta}=1}
  \snorm{\int_\Domain f \cdot (\smt_1\sigma - \sigma)},
\end{equation*}
where $\sigma$ varies in $\CRS{}^d$. For any element $K\in\Mesh$, we have $\int_{\partial K} \smt_1\sigma - \sigma = 0$ implying the Poincar\'e inequality $\norm{\smt_1\sigma-\sigma}_{\Leb{K}^d} \Cleq h_K \norm{\Grad(\smt_1\sigma-\sigma)}_{\Leb{K}^d}$. Therefore,
\begin{align*}
 \snorm{\int_\Domain f \cdot (\smt_1\sigma - \sigma)}
 \Cleq
  \left(
 \sum_{K \in \Mesh} h_K^2 \norm{f}_{\Leb{K}^d}^2
 \right)^{\frac{1}{2}}
 \norm{\GradM(\smt_1\sigma-\sigma)}_{\Leb{\Domain}^d}.
\end{align*}
Hence, Proposition \ref{P:IP-smoother} and \cite[Proposition~3.3]{Veeser.Zanotti:17p2} followed by Brenner \cite[Theorem~3.1]{Brenner:04} finish the proof.
\end{proof}
We readily see from this proof that the asymptotic closeness of $U$ and $\hat{U}$ could be increased by requiring that the smoothing operator conserves also element moments.

A consequence of Lemma~\ref{L:HLnew-HLoriginal} is the following equivalence concerning the asymptotic error bounds
\begin{equation*}
 \norm{u-U}_{\lambda;\eta}
 \leq
 C h \norm{f}_{\Leb{\Domain}^d},
\quad
  \norm{u-\hat{U}}_{\lambda;\eta}
 \leq
 \hat{C} h \norm{f}_{\Leb{\Domain}^d}
\end{equation*}
with best constants $C$ and $\hat{C}$ for all $h := \max_{K \in \Mesh} h_K$ and $f \in \Leb{\Domain}^d$:
\begin{equation}
\label{locking-iff}
 \text{$C$ is independent of $\lambda$}
 \iff
 \text{$\hat{C}$ is independent of $\lambda$}.
\end{equation}
Therefore, the robustness result \cite[Theorem~3.1]{Hansbo.Larson:03}, which ensures that $\hat{C}$ is independent of $\lambda$ for polygons $\Omega\subseteq\R^2$, 
carries over to $\app_\HL$. In summary, for smooth volume forces, the method $\app_\HL$ is locking-free. The non-robustness of the quasi-optimality constant is thus due to rough volume forces, including forces for which the locking-free nonconforming methods in Falk \cite{Falk:91}, Brenner and Sung \cite{Brenner.Sung:92}, and Hansbo and Larson \cite{Hansbo.Larson:03} are not defined.

Let us conclude this section with a remark on the generalization to order $\degree \geq 2$, where $\CR$ is replaced by its higher order counterpart from $\CRS{\degree}$ from Baran and Stoyan \cite{Baran.Stoyan:06}. This case is of different nature. In fact, the Korn inequalities of Brenner \cite{Brenner:04}  ensure that $\norm{\cdot}_{\lambda;\eta}$ is a norm on $\SobH{\Domain}^d+\CRS{\degree}^d$ even for $\eta=0$. This allows to construct overconsistent methods with the help of $\smt_\degree$ from Proposition~\ref{P:IP-smoother}.

\subsection{A quasi-optimal $C^0$ interior penalty method for the biharmonic problem}
\label{S:IP-fourth-order}
%

In this subsection, we introduce a new $C^0$ interior penalty method for the biharmonic problem with clamped boundary conditions,
\begin{equation}
\label{biharmonic-problem}
 \Lapl^2 u = f \text{ in }\Omega,
\quad
 u = \partial_n u = 0 \text{ on }\partial\Omega,
\end{equation}
and prove its quasi-optimality. We let $\Omega$ and $\Mesh$ be as in \S\ref{S:simplices-meshes} with $d=2$. %
Jumps and averages of vector- and matrix-valued maps are intended componentwise. Consequently, if $v \in H^2(\Mesh)$, then $\Jump{\Grad v}{F} \cdot \nF$ and $\Avg{\Grad v}{F} \cdot \nF$ indicate, respectively, the jump and the average of the normal derivative of $v$ on the skeleton $\Sigma$.  
We write also $\Jump{\partial^2 v/ \partial \nF^2}{F}$ and $\Avg{\partial^2 v/ \partial \nF^2}{F}$ in place of $(\Jump{D^2 v}{F} \nF) \cdot \nF$ and $(\Avg{D^2 v}{F} \nF) \cdot \nF$, respectively. Given $\eta\geq0$, set
\begin{equation}
\label{extended-H2-seminorm}
\begin{gathered}
 (v, w)_{2;\eta}
 :=
 \int_{\Domain} \HessM v: \HessM w
 +
 \int_{\Sigma} 
  \dfrac{\eta}{h_F}
  \left ( \Jump{\Grad v }{F} \cdot \nF \right )
  \left (\Jump{\Grad w }{F}\cdot \nF \right),
\\
 \snorm{v}_{2;\eta}
 :=
 (v,v)_{2;\eta}^{\frac{1}{2}}.
\end{gathered}
\end{equation}
for $v,w \in H^2(\Mesh)$ and abbreviate $(\cdot,\cdot)_{2;0}$ to $(\cdot,\cdot)_2$. Recalling \eqref{Skp}, consider the following setting for Theorem~\ref{T:abs-qopt}:
\begin{equation}
\label{bihamonic:setting}
\begin{gathered}
 V = \SobbH{\Domain},
\quad
 S = \Lagr{2},
\quad
 \aext = (\cdot,\cdot)_{2;\eta}
 \text{ on } \Vext = \SobbH{\Domain} + \Lagr{2}.
\end{gathered}
\end{equation}
For $\eta>0$, the bilinear form $(\cdot,\cdot)_{2;\eta}$ is a scalar product on
\begin{equation}
\label{biharmonic-CO:gradient-jump}
 \SobbH{\Domain}+\Lagr{2}
 \subseteq
 \{ v\in H^2(\Mesh) \mid \Jump{\Grad v}{}\cdot n^\perp = 0 \}
\end{equation}
and the abstract problem \eqref{ex-prob} with \eqref{bihamonic:setting} is a weak formulation of the biharmonic problem \eqref{biharmonic-problem}. The conforming part of $\Lagr{2}$ is the strict subspace
\begin{equation}
\label{biharmonic-C0:conforming-part}
 \Lagr{2} \cap \SobbH{\Domain}
 =
 \{ s \in \Lagr{2} \mid \Jump{\Grad s}{} \cdot n = 0 \},
\end{equation}
which may be even trivial; cf.\ \cite[Remark~3.11]{Veeser.Zanotti:17p2}.  Finally, we have
\begin{equation}
\label{biharmonic-C0:nontrivial-complement}
 \{0\} \neq \Lagr{1} \subseteq \Lagr{2}\cap \SobbH{\Domain}^\perp
\end{equation}
and, therefore, Lemma \ref{L:a(.,E.)} rules overconsistency out.

Let us turn to the choice of the smoothing operator. Interestingly, Brenner and Sung \cite{Brenner.Sung:05} propose a $C^0$ interior penalty method $\BS{\app}$ involving a smoothing operator based upon averaging. In contrast to similar methods,  $\BS{\app}$  is well-defined for general loads $\ell \in H^{-2}(\Domain)$, fully stable according to Theorem~\ref{T:abs-qopt}~(i), and, for any $\alpha>0$ and all $\ell \in H^{-2+\alpha}(\Domain)$, its error in $\snorm{\cdot}_{2;\eta}$ with a suitable $\eta$ decays at the optimal rate $\alpha$. Nevertheless, $\BS{\app}$ is not guaranteed to be quasi-optimal with respect to $\snorm{\cdot}_{2;\eta}$, because it is not designed to be fully algebraically consistent.

To devise a method ensuring full algebraic consistency, we proceed as before and derive the following counterpart of Lemma~\ref{DG-for-Poisson:conditions-for-E} with the help of \eqref{pwIbP}.

\begin{lemma}[Moment conservation]
\label{L:biharmonic-moment-conversation}
If the smoothing operator $\smt:\Lagr{2} \to \SobbH{\Domain}$ satisfies
\begin{equation}
\label{biharmonic:moments}
 \forall \sigma \in \Lagr{2}, F \in \FacesMint
\quad
 \int_F \Grad \smt \sigma
 =
 \int_F \Avg{\Grad \sigma}{F},
\end{equation}
then
\begin{equation*}
%
 \forall s,\sigma \in \Lagr{2}
\quad
 (s, \smt\sigma)_{2;\eta} 
 =
 \int_\Domain \HessM s : \HessM \sigma
 -
 \int_\Sigma \Avg{ \dfrac{\partial^2 s}{\partial \nF^2} }{F}
  \Jump{\Grad \sigma }{F}\cdot \nF. 
\end{equation*}
\end{lemma}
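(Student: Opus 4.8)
The plan is to mimic the proofs of Lemma~\ref{L:Poisson:moment-conservation} and Lemma~\ref{L:LE:moment-conservation}: start from the definition \eqref{extended-H2-seminorm} of $(\cdot,\cdot)_{2;\eta}$, and treat the volume term $\int_\Domain \HessM s : \HessM\smt\sigma$ and the skeleton term separately. For the skeleton term, observe that $\smt\sigma \in \SobbH{\Domain}$, so $\Jump{\Grad\smt\sigma}{F}\cdot\nF = 0$ on every $F\in\FacesMint$ and $\Grad\smt\sigma = 0$ on $\partial\Omega$; hence the penalty contribution of $\smt\sigma$ vanishes identically and we are left to rewrite $\int_\Domain \HessM s : \HessM\smt\sigma$.

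The core computation is a double piecewise integration by parts on the volume term, exactly parallel to the Poisson case but one derivative higher. First I would integrate by parts once in each component using \eqref{pwIbP} to move one derivative off $\smt\sigma$, producing $-\int_\Domain \GradM(\DivM \HessM s)\cdot\Grad\smt\sigma$ plus skeleton terms involving $\Jump{\HessM s\,\nF}{F}$ against $\Avg{\Grad\smt\sigma}{F}$ (the jump/average pairing landing on $\smt\sigma$ because $\Grad\smt\sigma$ is continuous, so its jump drops and only the average survives after reorganizing). Since $\smt\sigma\in\SobbH{\Domain}$, $\Grad\smt\sigma$ vanishes on $\partial\Omega$, killing the boundary contributions. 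Then integrate by parts a second time to move the remaining derivative back, arriving at $\int_\Domain \HessM s : \HessM\sigma$ (using $s\in\Lagr{2}$, so $\HessM s$ is piecewise constant and $\GradM\HessM s = 0$, which simplifies the interior volume term to nothing extra) together with skeleton terms pairing $\Avg{\HessM s\,\nF}{F}$ or $\Jump{\HessM s\,\nF}{F}$ against $\Grad\sigma$ or $\Jump{\Grad\sigma}{F}$. At this point I would invoke the moment hypothesis \eqref{biharmonic:moments}: on each interior face, $\Grad\smt\sigma$ and $\Avg{\Grad\sigma}{F}$ have the same integral against constants, and since $\HessM s\,\nF$ restricted to $F$ is constant (as $s$ is piecewise quadratic, $\HessM s$ is piecewise constant), the face integrals involving $\smt\sigma$ can be replaced by the corresponding integrals involving $\Avg{\Grad\sigma}{F}$. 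Collecting the surviving skeleton terms and using the continuity of $\Grad s$ in the tangential direction on $\Lagr{2}$ (cf.\ \eqref{biharmonic-CO:gradient-jump}, which forces $\Jump{\Grad s}{F}\cdot n^\perp = 0$) to discard everything except the normal-normal component, I expect exactly $-\int_\Sigma \Avg{\partial^2 s/\partial\nF^2}{F}\,\Jump{\Grad\sigma}{F}\cdot\nF$ to remain.

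The main obstacle is bookkeeping the many skeleton terms that appear at the two integration-by-parts steps and correctly tracking which factor carries a jump and which an average. The key simplifications that make everything collapse are: (i) $s\in\Lagr{2}$ has piecewise-constant Hessian, so all volume remainders vanish and all face traces of $\HessM s$ are constants, which is precisely what lets the moment condition \eqref{biharmonic:moments} be applied; (ii) $\smt\sigma\in\SobbH{\Domain}$ is genuinely $H^2_0$, so both its gradient jumps on interior faces and its full gradient on $\partial\Omega$ vanish; and (iii) the inclusion \eqref{biharmonic-CO:gradient-jump} kills the tangential part of $\Jump{\Grad s}{F}$, reducing all Hessian-gradient pairings to their normal-normal scalar component $\partial^2 s/\partial\nF^2$. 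Once these are organized, the identity follows by direct substitution with no estimates needed; I would present it compactly, referring back to the analogous manipulation in the proof sketch preceding Lemma~\ref{L:Poisson:moment-conservation} rather than writing out every term.
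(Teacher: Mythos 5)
Your proposal is correct and follows exactly what the paper intends by ``proceed as before with the help of \eqref{pwIbP}'': drop the penalty term because $\smt\sigma\in H^2_0(\Omega)$, piecewise integrate by parts to convert $\int_\Omega \HessM s:\HessM\smt\sigma$ into face integrals (noting $\HessM s$ piecewise constant kills the volume remainder and makes the moment hypothesis applicable on each face), compare with the analogous piecewise integration by parts of $\int_\Omega\HessM s:\HessM\sigma$, and finally use the tangential continuity \eqref{biharmonic-CO:gradient-jump} (of $\Grad\sigma$, not $\Grad s$ as you wrote) to reduce the Hessian-gradient face pairings to the normal-normal component $\partial^2 s/\partial n^2$. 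The only formal slip is that the intermediate volume term after one integration by parts should read $-\int_\Omega(\DivM\HessM s)\cdot\Grad\smt\sigma$ rather than $-\int_\Omega\GradM(\DivM\HessM s)\cdot\Grad\smt\sigma$, but both vanish for $s\in\Lagr{2}$, so the conclusion is unaffected.
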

Thanks to $\Lagr{2}+\SobbH{\Domain} \subseteq C^0(\overline{\Domain})$ and the fundamental theorem of calculus, we may ensure the conservation  \eqref{biharmonic:moments} of the mean gradients on faces by
\begin{equation}
\label{biharmonic:moments'}
 \forall z \in \LagNodMint{1}\;\;
 \smt\sigma(z) = \sigma(z)
\quad\text{and}\quad
  \forall F \in \FacesMint \;\;
 \int_F \Grad \smt \sigma \cdot \nF
 =
 \int_F \Avg{\Grad \sigma}{F} \cdot \nF.
\end{equation}
The smoothing operator for Morley functions in \cite{Veeser.Zanotti:17p2} verifies these new requirements. We adapt its construction to the current setting, focusing on the modifications only. Let us begin with the (simplified) averaging operator mapping into the Hsieh-Clough-Tocher (HCT) space
\begin{equation*}
 \HCT 
 :=
 \{ s \in C^1(\overline{\Domain})  \mid 
  \forall K \in \Mesh \;
  s_{|K} \in C^1(K) \cap \Poly{3}(\Mesh_K),
  s = \partial_n s = 0 \text{ on }\partial\Omega
 \},
\end{equation*}
where $\Mesh_K$ stands for the triangulation obtained by connecting each vertex of the triangle $K$ with its barycenter $m_K$. For each vertex $z\in\LagNodMint{1}$ and edge $F\in\FacesMint$, we pick elements $K_z, K_F\in\Mesh$ containing $z$ or $F$, respectively, and define
\begin{equation}
\label{HCT-smoother}
 A_{\hct}\sigma
 :=
 \sum_{z \in \LagNodMint{1}} \left(
  \sigma(z) \Upsilon_z^0
  + \sum_{j=1}^2 \partial_{j} \big( \sigma_{|K_z} \big)(z) \Upsilon_z^j
 \right)
 + \sum_{F \in \FacesMint}
  \frac{\partial\big( \sigma_{|K_{F}} \big)}{\partial \nF}(m_F)
  \Upsilon_{F},
\end{equation}
where $\Upsilon_z^j$ with $z \in \LagNodMint{1}$, $j \in \{ 0, 1, 2\}$ and $\Upsilon_{F}$ with $F \in \FacesMint$ form the nodal basis of $\HCT$. Next, we introduce the bubble smoother. Given any interior edge $F \in \FacesMint$, let $K_1, K_2 \in \Mesh$ be the two elements such that $F = K_1 \cap K_2$. Considering their barycentric coordinates $(\lambda_z^{K_i})_{z \in \LagNod{1}(K_i)}$, $i=1,2$, as first-order polynomials on $\R^2$, set
\begin{equation*}
\bar{\phi}_F
:=
\frac{30}{\snorm{F}} \phi_F
\quad\text{with}\quad
\phi_F
:=
\begin{cases}
\prod \limits_{z \in \LagNod{1}(F)}
\left( \lambda_z^{K_1} \lambda_z^{K_2}\right)^2
&\text{in } K_1 \cup K_2,
\\
0 &\text{in } \Domain \setminus(K_1 \cup K_2),
\end{cases}
\end{equation*}
and define $\zeta_F(x) := (x-m_F) \cdot \nF$ for  $x \in \R^2$. Then 
$
 \bar{\Phi}_{n_F}
 :=
 \zeta_F \bar{\phi}_F
$ 
is in $\SobbH{\Domain}$ and satisfies
$
\int_{F'} \nabla \bar{\Phi}_{n_F} \cdot n_{F'}
=
\int_{F'} n_F \cdot n_{F'} \bar{\phi}_{F}
=
\delta_{F,F'}
$ 
for all $F' \in \FacesMint$ thanks to \eqref{intregation-bary-coord}. Hence, 
\begin{equation*}
\label{MR-bubble-smoother}
\BubbOper[\partial_n] \sigma
:=
\sum_{F \in \FacesMint} 
\left( \int_F \Avg{\Grad\sigma}{F} \cdot \nF \right) \bar{\Phi}_{n_F}
\end{equation*} 
maps $\Lagr{2}+\HCT$ into $\SobbH{\Domain}$, verifying $\BubbOper[\partial_n]\sigma(z) = 0$ for all $z\in\LagNodMint{1}$ and the second part of \eqref{biharmonic:moments'}. The combination of bubble smoother and averaging thus yields the desired moment conservation in a stable manner.

\begin{proposition}[Stable smoothing with moment conservation]
\label{biharmonic-C0-P:stable-smoothing}
The linear operator  $\smt_{\mathrm{C0}}:\Lagr{2}\to\SobbH{\Domain}$ given by
\begin{equation*}
 \smt_{\mathrm{C0}}\sigma
 :=
 \AvgOper{\hct}\sigma
 + \BubbOper[\partial_n](\sigma - \AvgOper{\hct}\sigma)
\end{equation*}
is invariant on $\Lagr{2}\cap\SobbH{\Domain}$, verifies \eqref{biharmonic:moments} and, for all $\sigma\in\Lagr{2}$,
\begin{equation*}
 \norm{\HessM (\sigma - \smt_{\mathrm{C0}}\sigma)}_{\Leb{\Domain}}
 \leq
 C_{\Shape_\Mesh} \left(
  \sum_{F \in \FacesM} h_F^{-1} \norm{\Jump{\Grad\sigma}{F}\cdot n}_{\Leb{F}}^2
\right)^{\frac{1}{2}}.
\end{equation*}
\end{proposition}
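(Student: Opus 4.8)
The plan is to mimic the proof of Proposition~\ref{P:IP-smoother}, adapting the three ingredients to the fourth-order setting: moment conservation is read off from the construction, invariance follows from the averaging operator being a projection, and the stability bound is reduced, via the local stability of the bubble smoother and the scaling of the HCT nodal basis, to an estimate for the error of $\AvgOper{\hct}$ in terms of jumps of the normal derivative.

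First I would treat invariance: if $\sigma\in\Lagr{2}\cap\SobbH{\Domain}$ then $\sigma\in C^1(\overline{\Domain})$ with vanishing trace and normal derivative, so $\AvgOper{\hct}\sigma=\sigma$ (the nodal values and gradients picked from any element agree, and the normal-derivative-at-midedge data are single-valued), whence $\sigma-\AvgOper{\hct}\sigma=0$ and $\smt_{\mathrm{C0}}\sigma=\sigma$. For moment conservation \eqref{biharmonic:moments}, it suffices by \eqref{biharmonic:moments'} and $\Lagr{2}+\HCT\subset C^0(\overline{\Domain})$ to check the two scalar conditions: $\AvgOper{\hct}\sigma(z)=\sigma(z)$ is built into \eqref{HCT-smoother} while $\BubbOper[\partial_n]$ vanishes at vertices, giving the first; and since $\AvgOper{\hct}\sigma\in\HCT\subset C^1$, its normal derivative has no jump, so $\int_F\Grad\AvgOper{\hct}\sigma\cdot\nF=\int_F\Avg{\Grad\AvgOper{\hct}\sigma}{F}\cdot\nF$, and then the defining property $\int_{F'}\Grad\bar{\Phi}_{n_F}\cdot n_{F'}=\delta_{F,F'}$ of the bubbles makes $\BubbOper[\partial_n](\sigma-\AvgOper{\hct}\sigma)$ restore exactly the missing edge mean $\int_F\Avg{\Grad(\sigma-\AvgOper{\hct}\sigma)}{F}\cdot\nF$, yielding the second.

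For the stability estimate I would split $\norm{\HessM(\sigma-\smt_{\mathrm{C0}}\sigma)}_{\Leb{\Domain}}\leq\norm{\HessM(\sigma-\AvgOper{\hct}\sigma)}_{\Leb{\Domain}}+\norm{D^2\BubbOper[\partial_n](\sigma-\AvgOper{\hct}\sigma)}_{\Leb{\Domain}}$. The second term is controlled element by element by a local stability bound for the $\partial_n$-bubble smoother (the analogue of Lemma~\ref{L:DG-bubble-smoother}, following \cite{Veeser.Zanotti:17p2}): $\norm{D^2\bar{\Phi}_{n_F}}_{\Leb{K}}\Cleq h_F^{-2}|K|^{1/2}$ by \eqref{intregation-bary-coord} and scaling, so $\norm{D^2\BubbOper[\partial_n]\tau}_{\Leb{K}}\Cleq\sum_{F\subset\partial K'}h_F^{-2}|K|^{1/2}|\int_F\Avg{\Grad\tau}{F}\cdot\nF|\Cleq\sum_F h_F^{-1/2}\norm{\Avg{\Grad\tau}{F}\cdot n}_{\Leb{F}}$, and for $\tau=\sigma-\AvgOper{\hct}\sigma$ the edge averages of $\Grad\tau\cdot\nF$ reduce, using that $\AvgOper{\hct}\sigma$ is $C^1$ across $F$, to jump contributions plus terms already appearing in the first summand; an inverse estimate in $\Poly{3}$ on the HCT sub-triangulation then passes from $L^2(F)$ back to the global expression. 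The first term is the heart of the matter: expanding $\sigma-\AvgOper{\hct}\sigma$ in the HCT nodal basis on each $K$, using the scaling $\norm{D^2\Upsilon_z^0}_{\Leb{K}}\Cleq h_K^{-2}|K|^{1/2}$, $\norm{D^2\Upsilon_z^j}_{\Leb{K}}\Cleq h_K^{-1}|K|^{1/2}$, $\norm{D^2\Upsilon_F}_{\Leb{K}}\Cleq h_K^{-1}|K|^{1/2}$, one reduces to bounding the nodal discrepancies: the vertex-value defects $|\sigma_{|K}(z)-\AvgOper{\hct}\sigma(z)|=0$ since $\smt_{\mathrm{C0}}$ reproduces point values of continuous functions only up to the averaging choice — more precisely the defect in $\partial_j\sigma$ at vertices and in $\partial_n\sigma$ at edge midpoints is what survives, and these are bounded by jumps of $\Grad\sigma\cdot n$ over a face-connected path around the vertex/edge, exactly as in Lemma~\ref{L:averaging-for-IP} and \cite[Lemma~3.1]{Veeser.Zanotti:17p2}, yielding $\Cleq\sum_{F\ni z}|F|^{-1/2}\norm{\Jump{\Grad\sigma}{F}\cdot n}_{\Leb{F}}$.

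The main obstacle I expect is precisely this last reduction: showing that \emph{all} surviving nodal discrepancies of $\AvgOper{\hct}$ — the vertex gradient defects, and crucially the mid-edge normal-derivative defects that are new compared with the Morley/HCT setting in \cite{Veeser.Zanotti:17p2} — are estimable solely by $L^2$-norms of $\Jump{\Grad\sigma}{F}\cdot n$ on faces meeting the relevant star, \emph{without} leftover tangential-jump or gradient terms. Here one exploits \eqref{biharmonic-CO:gradient-jump}: on $\Lagr{2}+\SobbH{\Domain}$ the tangential component of $\Jump{\Grad\sigma}{F}$ vanishes because $\sigma$ is continuous, so $\Jump{\Grad\sigma}{F}=(\Jump{\Grad\sigma}{F}\cdot\nF)\nF$ and every jump that appears is automatically a normal-derivative jump. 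The face-connectedness of stars (available since $\partial\Omega$ is Lipschitz) then telescopes the discrepancy at any interior node along a path of shared faces, and an inverse estimate on each face converts $\norm{\cdot}_{L^\infty(F)}$ to $|F|^{-1/2}\norm{\cdot}_{\Leb{F}}$; summing over $K$ with the finite-overlap bound $\#\{K'\subset\omega_K\}\Cleq1$ produces the asserted inequality. Everything else is the routine scaling bookkeeping already carried out in the proof of Proposition~\ref{P:IP-smoother}.
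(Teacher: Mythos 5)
Your proposal is correct and follows essentially the same route as the paper, which also reduces everything to the pattern of Proposition~\ref{P:IP-smoother} (via Proposition~3.16 of the companion paper) and isolates as the key modification the bound on the vertex gradient defects of $\AvgOper{\hct}$ by normal-derivative jumps, obtained from face-connectedness together with the observation that \eqref{biharmonic-CO:gradient-jump} makes the tangential jump contributions vanish. The decomposition $\norm{\HessM(\sigma-\smt_{\mathrm{C0}}\sigma)}\leq\norm{\HessM(\sigma-\AvgOper{\hct}\sigma)}+\norm{D^2\BubbOper[\partial_n](\sigma-\AvgOper{\hct}\sigma)}$, the expansion in the local HCT nodal basis with the scalings of $D^2\Upsilon_z^j$ and $D^2\Upsilon_F$, and the conversion from pointwise discrepancies to $L^2(F)$-norms by inverse estimates all match what the paper implicitly invokes.

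One sentence in your write-up is garbled (``the vertex-value defects are zero since $\smt_{\mathrm{C0}}$ reproduces point values of continuous functions only up to the averaging choice''); the clean statement is that the vertex \emph{value} defects vanish because $\AvgOper{\hct}$ assigns $\sigma(z)$, which is single-valued for continuous $\sigma$, while only the vertex \emph{gradient} and mid-edge normal-derivative data depend on the choice of $K_z$, $K_F$. Also note that the mid-edge normal-derivative defect is not really ``new'' relative to the Morley construction in the companion paper, which carries the same degree of freedom; what is genuinely specific here is that, thanks to the $C^0$-coupling, $\Jump{\Grad\sigma}{F}=(\Jump{\Grad\sigma}{F}\cdot\nF)\nF$, so all defects can be expressed purely in terms of $\norm{\Jump{\Grad\sigma}{F}\cdot n}_{\Leb{F}}$ with no leftover tangential or full-gradient terms.
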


\begin{proof}
We proceed as in \cite[Proposition~3.16]{Veeser.Zanotti:17p2} with the following difference. For the HCT averaging, we use the bound 
\begin{equation*}
 \snorm{\Grad(\sigma_{|K})(z) - \Grad \AvgOper{\HCT}\sigma(z)}
 \leq
 C \sum \limits_{F \in \FacesM: F \ni z}
  h_F^{-\frac{1}{2}} \norm{\Jump{\Grad\sigma}{F} \cdot n}_{\Leb{F}},
\end{equation*}
which follows along the lines of the proof of Lemma~\ref{L:averaging-for-IP} and from \eqref{biharmonic-CO:gradient-jump}.
\end{proof}

It remains to choose the bilinear form $\bCons(\cdot,\cdot)$. In view of \eqref{biharmonic-C0:nontrivial-complement}, we need to establish nondegeneracy, for example in the vein of the extended energy norm $\snorm{\cdot}_{2;\eta}$. Requiring also symmetry for the resulting discrete bilinear form then leads to 
\begin{equation*}
%
 \BS{\bCons}(s, \sigma)
 =
 \int_{\Sigma}
 \Jump{\Grad s }{F}\cdot \nF 
 \left( 
  -
  \Avg{\dfrac{\partial^2 \sigma}{\partial \nF^2}}{F}
  +
  \dfrac{\eta}{h} 
  \Jump{\Grad \sigma }{F}\cdot \nF 
 \right) 
\end{equation*}
and the discrete bilinear form of Brenner and Sung \cite{Brenner.Sung:05}:
\begin{equation}
\label{biharmonic-C0:discrete-bilinear-form}
\begin{aligned}
 \BS{b}(s,\sigma)
 =
 (s,\sigma)_{2;\eta}
 -
 \int_\Sigma \left( 
\Avg{\dfrac{\partial^2 s}{\partial \nF^2}}{F}
\Jump{\Grad \sigma }{F}\cdot \nF
+
\Jump{\Grad s }{F}\cdot \nF
\Avg{\dfrac{\partial^2 \sigma}{\partial \nF^2}}{F}
\right). 
\end{aligned}
\end{equation}
Similarly to the SIP bilinear form, there is $\eta_* > 0$ depending on $\Shape_\Mesh$ such that
\begin{equation}
\label{biharmonic-C0:inverse-estimate}
 \norm{ h^{-\frac{1}{2}}
  \Avg{\partial^2\sigma/\partial^2 n}{}}_{\Leb{\Sigma}}
 \leq
 \eta_*
 \norm{\HessM\sigma}_{\Leb{\Domain}}
\end{equation}
and therefore $\BS{b}$ is $\snorm{\cdot}_{2,\eta}$-coercive with constant $\sqrt{\alpha(\eta_*/\eta)}$ whenever $\eta>\eta_*$; cf.\ \eqref{SIP:coercivity} and \cite[Lemma 7]{Brenner.Sung:05}. Under this assumption, the discrete problem
\begin{equation}
\label{biharmonic-C0:discrete-problem}
 U \in \Lagr{2} \text{ such that }
 \forall\sigma\in\Lagr{2} \;\;
 \BS{b}(U,\sigma) = \langle f, \smt_{\mathrm{C0}}\sigma \rangle
\end{equation}
is well-posed and introduces a \emph{new $C^0$ interior penalty method} $\app_{\mathrm{C0}}$  for the biharmonic problem \eqref{biharmonic-problem}.
Inspecting $\BS{b}$, $\smt_{\mathrm{C0}}$ and recalling Proposition~\ref{biharmonic-C0-P:stable-smoothing}, we see that 
$\app_{\mathrm{C0}}=(\Lagr{2},\BS{b},\smt_{\mathrm{C0}})$ is a nonconforming Galerkin method with a computationally feasible smoothing operator. It differs from the original method of Brenner and Sung \cite{Brenner.Sung:05} in the choice of the smoother and the following property.

\begin{theorem}[Quasi-optimality of $\app_{\mathrm{C0}}$]
\label{biharmonic-C0-T:quasi-optimality}
For any penalty parameter $\eta>\eta_*$, the method $\app_{\mathrm{C0}}$ is $\snorm{\cdot}_{2;\eta}$-quasi-optimal for the biharmonic problem \eqref{biharmonic-problem} with constant
$\leq \sqrt{1+C_{\Shape_\Mesh}\big( \alpha(\eta_*/\eta)\eta \big)^{-1}}$.
\end{theorem}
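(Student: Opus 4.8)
The plan is to apply Theorem~\ref{T:abs-qopt} exactly as in the proofs of Theorems~\ref{T:qopt-NIP}, \ref{T:qopt-SIP}, and \ref{T:qopt-elasticity}. The structural ingredients are already in place: by Lemma~\ref{L:biharmonic-moment-conversation} and Proposition~\ref{biharmonic-C0-P:stable-smoothing}, the smoother $\smt_{\mathrm{C0}}$ conserves the mean gradients \eqref{biharmonic:moments} and is invariant on the conforming subspace $\Lagr{2}\cap\SobbH{\Domain}$; hence $\app_{\mathrm{C0}}$ is a nonconforming Galerkin method in the sense of \eqref{NonConformingGalerkinMethod} and therefore fully algebraically consistent, so Theorem~\ref{T:abs-qopt}(ii) gives quasi-optimality and (iii) gives $\Cqopt=\sqrt{1+\delta^2}$. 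It then only remains to bound the consistency measure $\delta$, and by \eqref{biharmonic-C0:inverse-estimate} together with the coercivity constant $\sqrt{\alpha(\eta_*/\eta)}$ of $\BS{b}$, an estimate of the form $\delta \leq C_{\Shape_\Mesh}\big(\alpha(\eta_*/\eta)\eta\big)^{-1/2}$ yields the claimed bound on the quasi-optimality constant.

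To bound $\delta$, I would fix $v\in\SobbH{\Domain}$ and $\sigma\in\Lagr{2}$ and write $\Ritz_{2;\eta}$ for the $(\cdot,\cdot)_{2;\eta}$-orthogonal projection onto $\Lagr{2}$. Starting from $\bCons(v,\sigma)=\BS{b}(\Ritz_{2;\eta}v,\sigma)-(v,\smt_{\mathrm{C0}}\sigma)_2$, I would use Lemma~\ref{L:biharmonic-moment-conversation} to rewrite $(\Ritz_{2;\eta}v,\smt_{\mathrm{C0}}\sigma)_{2;\eta}$, exploit that $\Jump{\Grad v}{}\cdot n=0=\Jump{\Grad\smt_{\mathrm{C0}}\sigma}{}\cdot n$ (so the jump-penalty and the $\Avg{\partial^2\cdot/\partial\nF^2}{}\Jump{\Grad\cdot}{}$ cross terms involving $v$ and $\smt_{\mathrm{C0}}\sigma$ drop out), and use the defining orthogonality of $\Ritz_{2;\eta}$. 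This should collapse $\bCons(v,\sigma)$ to a sum of two terms: $(\Ritz_{2;\eta}v-v,\smt_{\mathrm{C0}}\sigma-\sigma)_2$ and a skeleton term $-\int_\Sigma \Avg{\partial^2\sigma/\partial\nF^2}{F}\,\Jump{\Grad(\Ritz_{2;\eta}v-v)}{F}\cdot\nF$, in direct analogy with the display for $b_\NIP(\Ritz_{\eta,\degree}v,\sigma)-(v,E_p\sigma)_1$ in the proof of Theorem~\ref{T:qopt-NIP}. The first term is controlled by the stability bound of Proposition~\ref{biharmonic-C0-P:stable-smoothing} (which gives $\norm{\HessM(\smt_{\mathrm{C0}}\sigma-\sigma)}_{\Leb{\Domain}}\Cleq \eta^{-1/2}\snorm{\sigma}_{2;\eta}$ after inserting the jump penalty), and the second by \eqref{biharmonic-C0:inverse-estimate}, so that $\snorm{\bCons(v,\sigma)}\Cleq \eta^{-1/2}\snorm{\sigma}_{2;\eta}\snorm{\Ritz_{2;\eta}v-v}_{2;\eta}$. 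Dividing by the coercivity constant of $\BS{b}$ (cf.\ Remark~\ref{R:stab-cons-infsup}) then gives $\delta\Cleq (\alpha(\eta_*/\eta)\eta)^{-1/2}$.

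The main obstacle — really the only nonroutine point — is the bookkeeping that shows the ``mixed'' skeleton contributions vanish when one of the two arguments is the smooth function $v$ or its image $\smt_{\mathrm{C0}}\sigma$. This relies on $v,\smt_{\mathrm{C0}}\sigma\in\SobbH{\Domain}\subseteq\{w\in H^2(\Mesh)\mid \Jump{\Grad w}{}\cdot n=0\}$ from \eqref{biharmonic-CO:gradient-jump}, so that both $\Jump{\Grad v}{}\cdot\nF$ and $\Jump{\Grad\smt_{\mathrm{C0}}\sigma}{}\cdot\nF$ are zero on $\Sigma$; after that the identity in Lemma~\ref{L:biharmonic-moment-conversation} reduces $(v,\smt_{\mathrm{C0}}\sigma)_{2;\eta}$ to the same expression whether one integrates $v$ against $\HessM\smt_{\mathrm{C0}}\sigma$ directly or passes through $\Ritz_{2;\eta}v$, and the remainder is exactly the two controllable terms above. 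Everything else — the inverse estimate \eqref{biharmonic-C0:inverse-estimate}, the coercivity via $\alpha(\eta_*/\eta)$, and the application of Theorem~\ref{T:abs-qopt} — is already supplied in the text, so the proof is short: ``Proceed as in the proof of Theorem~\ref{T:qopt-NIP} with $\smt_{\mathrm{C0}}$ in place of $E_p$ and $\snorm{\cdot}_{2;\eta}$ in place of $\snorm{\cdot}_{1;\eta}$, using Lemma~\ref{L:biharmonic-moment-conversation}, Proposition~\ref{biharmonic-C0-P:stable-smoothing}, \eqref{biharmonic-C0:inverse-estimate} and the coercivity of $\BS{b}$.''
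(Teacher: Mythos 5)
Your proposal is correct and follows essentially the same route as the paper's proof: apply Theorem~\ref{T:abs-qopt} using Lemma~\ref{L:biharmonic-moment-conversation} and Proposition~\ref{biharmonic-C0-P:stable-smoothing} (with \eqref{biharmonic-C0:conforming-part}) for full algebraic consistency, then bound $\delta$ by decomposing $\bCons(v,\sigma)$ into exactly the two pieces you identify — $(\Ritz_{\eta}v-v,\smt_{\mathrm{C0}}\sigma-\sigma)_2$ and the skeleton term with $\Avg{\partial^2\sigma/\partial\nF^2}{F}$ paired against $\Jump{\Grad(\Ritz_{\eta}v-v)}{F}\cdot\nF$ — using $\Jump{\Grad v}{}\cdot n=0=\Jump{\Grad\smt_{\mathrm{C0}}\sigma}{}\cdot n$, Lemma~\ref{L:biharmonic-moment-conversation}, and the orthogonality of the projection, and finally dividing through by the coercivity constant $\sqrt{\alpha(\eta_*/\eta)}$ of $\BS{b}$.
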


\begin{proof}
Assume $\eta>\eta_*$. Hence $\BS{b}$ is coercive and Theorem \ref{T:abs-qopt} applies. After making use of Lemma \ref{L:biharmonic-moment-conversation}, Proposition \ref{biharmonic-C0-P:stable-smoothing} and \eqref{biharmonic-C0:conforming-part}, it remains to bound $\delta$ in (iii) of Theorem \ref{T:abs-qopt}. To this end, we let $\Ritz_{\eta}$ denote the $(\cdot,\cdot)_{2;\eta}$-orthogonal projection onto $\Lagr{2}$ and derive, for all $v\in\SobbH{\Domain}$ and $\sigma\in\Lagr{2}$,
\begin{equation*}
 \BS{b}(\Ritz_{\eta}v,\sigma) - (v,\smt_{\mathrm{C0}}\sigma)_{2}
 =
 (\Ritz_{\eta}v - v, \smt_{\mathrm{C0}}\sigma - \sigma)_2
 -
 \int_{\Sigma}
  \Jump{\Ritz_{\eta} v -v}{F} \cdot n \Avg{\frac{\partial^2\sigma}{\partial n}}{}
\end{equation*}
with the help of $\Jump{\Grad\smt_{\mathrm{C0}}\sigma}{} = 0 = \Jump{\Grad v}{}$, Lemma~\ref{biharmonic:moments} and the definition of $\Ritz_{\eta}$.
Consequently, Proposition~\ref{biharmonic-C0-P:stable-smoothing} and \eqref{biharmonic-C0:inverse-estimate} yield
\begin{equation*}
 |\BS{b}(\Ritz_{\eta}v,\sigma) - (v,\smt_{\mathrm{C0}}\sigma)_{2}|
 \leq
 C_{\Shape_\Mesh} \eta^{-\frac{1}{2}} \snorm{\Ritz_{\eta}v - v}_{2;\eta}
  \snorm{\sigma}_{2;\eta}.
\end{equation*}
The coercivity of $\BS{b}$ thus implies $\delta^2 \Cleq \big( \alpha(\eta_*/\eta)\eta \big)^{-1}$ and the proof is finished.
\end{proof}

The presented approach can be extended to design quasi-optimal methods of order $\degree \geq 3$. Perhaps the simplest manner is to keep the HCT averaging $\AvgOper{\hct}$ and to construct a higher order version of the bubble smoother similar to $\BubbOper[\degree]$ in \S\ref{S:DG-for-Poisson}. This does not result in a nonconforming Galerkin method, but achieves quasi-optimality.  



\end{document}